\documentclass[pdflatex,sn-mathphys-ay]{sn-jnl}

\usepackage{graphicx}%
\usepackage{multirow}%
\usepackage{amsmath,amssymb,amsfonts}%
\usepackage{amsthm}%
\usepackage{mathrsfs}%
\usepackage[title]{appendix}%
\usepackage{xcolor}%
\usepackage{textcomp}%
\usepackage{manyfoot}%
\usepackage{booktabs}%
\usepackage{algorithm}%
\usepackage{algorithmicx}%
\usepackage{algpseudocode}%
\usepackage{listings}%

\usepackage{mathtools}
\usepackage{aliascnt}
\usepackage{bm}
\usepackage{tikz}

\hypersetup{
  colorlinks=true,
  citecolor=blue,
  urlcolor=blue,
  linkcolor=blue
}
\usepackage[nameinlink,noabbrev]{cleveref}
\numberwithin{equation}{section}

\crefname{theorem}{Theorem}{Theorems}
\Crefname{theorem}{Theorem}{Theorems}
\crefname{proposition}{Proposition}{Propositions}
\Crefname{proposition}{Proposition}{Propositions}
\crefname{lemma}{Lemma}{Lemmas}
\Crefname{lemma}{Lemma}{Lemmas}
\crefname{corollary}{Corollary}{Corollaries}
\Crefname{corollary}{Corollary}{Corollaries}
\crefname{remark}{Remark}{Remarks}
\Crefname{remark}{Remark}{Remarks}
\crefname{section}{Section}{Sections}
\Crefname{section}{Section}{Sections}
\crefname{subsection}{Section}{Sections}
\Crefname{subsection}{Section}{Sections}
\crefformat{equation}{#2(#1)#3}
\Crefformat{equation}{#2(#1)#3}
\crefrangeformat{equation}{#3(#1)#4--#5(#2)#6}
\Crefrangeformat{equation}{#3(#1)#4--#5(#2)#6}
\crefmultiformat{equation}
  {#2(#1)#3}{ and~#2(#1)#3}{, #2(#1)#3}{, and~#2(#1)#3}
\Crefmultiformat{equation}
  {#2(#1)#3}{ and~#2(#1)#3}{, #2(#1)#3}{, and~#2(#1)#3}

\numberwithin{equation}{section}
\newcommand{\Z}{\mathbb Z}

\newcommand{\supp}{\operatorname{supp}}

\theoremstyle{thmstyleone}%
\newtheorem{theorem}{Theorem}
\newaliascnt{proposition}{theorem}
\newtheorem{proposition}[proposition]{Proposition}%
\aliascntresetthe{proposition}

\newaliascnt{corollary}{theorem}
\newtheorem{corollary}[corollary]{Corollary}
\aliascntresetthe{corollary}

\theoremstyle{thmstyletwo}%
\newtheorem{remark}{Remark}%
\newtheorem{lemma}{Lemma}

\theoremstyle{thmstylethree}%

\begin{document}

\title[Normal approximation for total magnetization in subcritical long-range Ising models]{Normal approximation for total magnetization in subcritical long-range Ising models}





\author[1]{\fnm{Song-Hao} \sur{Liu}}\email{liusonghao@dlut.edu.cn}

\author*[2]{\fnm{Qi-Man} \sur{Shao}}\email{shaoqm@sustech.edu.cn}

\author[3]{\fnm{Yang} \sur{Xie}}\email{xieyang@link.cuhk.edu.hk}

\author[4]{\fnm{Jing-Yu} \sur{Xu}}\email{12131253@mail.sustech.edu.cn}




\affil[1]{%
  \orgdiv{School of Mathematical Sciences},
  \orgname{Dalian University of Technology},
  \orgaddress{
    \city{Dalian},
    \country{China}
  }
}

\affil*[2]{%
  \orgdiv{Department of Statistics and Data Science, Shenzhen International
Center for Mathematics},
  \orgname{Southern University of Science and Technology},
  \orgaddress{
    \city{Shenzhen},
    \country{China}
  }
}

\affil[3]{%
  \orgdiv{Department of Statistics},
  \orgname{The Chinese University of Hong Kong},
  \orgaddress{
    \city{Hong Kong},
    \country{China}
  }
}
 
\affil[4]{%
  \orgdiv{Department of Statistics and Data Science},
  \orgname{Southern University of Science and Technology},
  \orgaddress{
    \city{Shenzhen},
    \country{China}
  }
}


\abstract{
\unboldmath

We establish Berry--Esseen bounds and 1-Wassershtein bounds for the total
spin of strictly subcritical zero-field Ising models with
summable interactions.  Let $J:\mathbb Z^d\to[0,\infty)$ be symmetric,
satisfy $J(0)=0$, and $\|J\|_1:=\sum_{x\in\mathbb Z^d}J(x)<\infty$.
For a nonempty finite set
$\Lambda\subset\mathbb Z^d$, consider zero-field Gibbs measure with free
boundary conditions, in which a spin configuration 
$\sigma\in\{-1,+1\}^{\Lambda}$ has probability proportional to
\[
 \exp\!\left\{
   \beta\sum_{x,y\in\Lambda}
   J(x-y)\sigma_x\sigma_y
 \right\}.
\]
Let $S_\Lambda:=\sum_{x\in\Lambda}\sigma_x$ be the total spin and $\beta_{\mathrm c}$ denote the critical inverse temperature.
For every fixed  $0\leq\beta<\beta_{\mathrm c}$, we prove that
\[
 d_{\mathrm K}\!\left(
   \frac{S_\Lambda}{\sqrt{\operatorname{Var}(S_\Lambda)}},G
 \right)
 \leq C_{\beta,J}|\Lambda|^{-1/2},
\]
where $d_{\mathrm K}$ is the Kolmogorov distance, $G\sim \mathcal{N}(0,1)$, and $C_{\beta,J}$ depends only on $\beta$ and $J$.
For the fully periodized model on
$(\mathbb Z/L\mathbb Z)^d$, the corresponding rate is $L^{-d/2}$,
uniformly in $L$.
For the classical nearest-neighbor model,
the Kolmogorov bound also holds for
$-\beta_{\mathrm c}<\beta<\beta_{\mathrm c}$, covering both
ferromagnetic and antiferromagnetic interactions. 
We also obtain $1$-Wasserstein bounds with the same rates in the
respective subcritical regimes.
Both the Kolmogorov and $1$-Wasserstein bounds hold with the same rate under the corresponding
infinite-volume free-boundary Gibbs measures.
}

\pacs[MSC Classification]{60F05, 82B20, 60K35}

\keywords{
Berry--Esseen bound,
Normal approximation,
Long-range Ising model, 
Edwards--Sokal coupling
}


\maketitle

\section{Introduction}\label{sec:introduction}

We study quantitative normal approximation for the total spin in
zero-field ferromagnetic Ising models on $\mathbb Z^d$ with general
summable interactions.
The interaction may extend over arbitrarily large distances, and
no particular spatial decay rate is assumed.
Our aim is to obtain quantitative normal approximation bounds
throughout the subcritical regime in both Kolmogorov and
$1$-Wasserstein distances.

\subsection{Model and notation}\label{subsec:intro-model}

Let $d\geq1$ be an integer.
At each site $x\in\mathbb Z^d$, the spin $\sigma_x$ takes values
in $\{-1,+1\}$. In particular, $\sigma_0$ denotes the spin at the origin
$0=(0,\ldots,0)$.
Let $J:\mathbb Z^d\to[0,\infty)$ satisfy
\begin{equation}
\label{eq:intro-summable-J}
J(0)=0,\qquad J(x)=J(-x),\qquad
\|J\|_1:=\sum_{x\in\mathbb Z^d}J(x)<\infty.
\end{equation}
The interaction strength between sites $x$ and $y$ is $J(x-y)$.
For a nonempty finite set $\Lambda\subset\mathbb Z^d$ and inverse
temperature $\beta\geq0$, the zero-field Gibbs measure with free
boundary conditions is
\begin{equation}
\label{eq:intro-long-range-measure}
\mu^{\mathrm f}_{\Lambda,\beta}(\sigma)
=\frac{1}{Z^{\mathrm f}_{\Lambda,\beta}}
\exp\!\left\{
\beta\sum_{x,y\in\Lambda}J(x-y)\sigma_x\sigma_y
\right\},
\qquad \sigma\in\{-1,+1\}^{\Lambda}.
\end{equation}
Here $\{-1,+1\}^{\Lambda}$ denotes the set of all spin
configurations on $\Lambda$.
A configuration $\sigma=(\sigma_x)_{x\in\Lambda}$ assigns
a spin $\sigma_x\in\{-1,+1\}$ to each site $x\in\Lambda$.
The quantity $Z^{\mathrm f}_{\Lambda,\beta}$ is the
normalizing constant.
By convention, the sum in \eqref{eq:intro-long-range-measure}
runs over unordered pairs of distinct sites in $\Lambda$,
with each pair counted once.
Free boundary conditions mean that interactions between $\Lambda$
and its complement are omitted.
The dependence of the measure on the fixed interaction $J$ is
suppressed in the notation.

Let $B_R=[-R,R]^d\cap\mathbb Z^d$.
For every finite $\Lambda\subset\mathbb Z^d$ and
$\eta\in\{-1,+1\}^{\Lambda}$, 
we denote by $\mu^\infty_\beta$ the infinite-volume free-boundary
Gibbs measure, defined by
\begin{equation}
\label{eq:intro-infinite-volume}
\mu^\infty_\beta(\eta)
=\lim_{R\to\infty}
\mu^{\mathrm f}_{B_R,\beta}(\eta)
\end{equation}
Here $\mu^{\mathrm f}_{B_R,\beta}(\eta)$ denotes the marginal
probability of the configuration $\eta$ on $\Lambda$ under
the finite-volume free-boundary measure on $B_R$.
The limit is taken with $\Lambda$ fixed as $B_R$ grows
to fill $\mathbb Z^d$.
Unlike the finite-volume free-boundary model on $\Lambda$, this
measure retains interactions between $\Lambda$ and its complement.

The susceptibility and its critical inverse temperature are defined by
\begin{equation}
\label{eq:free-susceptibility-critical-point}
\chi_J(\beta)
=\sum_{x\in\mathbb Z^d}
\mathbb E[\sigma_0\sigma_x],
\qquad
\beta_{\mathrm c}(J)
=\sup\{\beta\geq0:\chi_J(\beta)<\infty\}.
\end{equation}
Here the expectation is taken under the infinite-volume
free-boundary Gibbs measure $\mu^\infty_\beta$.
The ferromagnetic correlation inequalities in \citet{Griffiths1967a} imply that
$\chi_J(\beta)$ is nondecreasing in $\beta$.
Thus $\chi_J(\beta)<\infty$ for every
$0\leq\beta<\beta_{\mathrm c}(J)$, which we call the subcritical
regime.

For a nonempty finite set $\Lambda\subset\mathbb Z^d$, write
\begin{align}\label{def:total-spin-normalize}
  S_\Lambda=\sum_{x\in\Lambda}\sigma_x,\qquad
  W_\Lambda=\frac{S_\Lambda}{\sqrt{\operatorname{Var}(S_\Lambda)}}.  
\end{align}
Expectation and variance are taken under the Gibbs measure under
consideration.
Both $\mu^{\mathrm f}_{\Lambda,\beta}$ and $\mu^\infty_\beta$ are
invariant under reversing all spins, so $\mathbb E S_\Lambda=0$.
Let $G\sim\mathcal N(0,1)$ and let $\Phi$ be its distribution
function.
For real random variables $X$ and $Y$, the Kolmogorov distance is
\[
d_{\mathrm K}(X,Y)
=\sup_{t\in\mathbb R}
\big|\mathbb P(X\leq t)-\mathbb P(Y\leq t)\big|.
\]
If $\mathbb E|X|+\mathbb E|Y|<\infty$, their $1$-Wasserstein
distance is
\[
d_{\mathrm W}(X,Y)
=\sup_{\operatorname{Lip}(f)\leq1}
\big|\mathbb E f(X)-\mathbb E f(Y)\big|,
\]
where the supremum is over all functions $f:\mathbb R\to\mathbb R$
satisfying $|f(u)-f(v)|\leq|u-v|$ for all $u,v\in\mathbb R$.
We use the term Berry--Esseen bound for a bound on
$d_{\mathrm K}(W_\Lambda,G)$.

A basic example, used below to compare the assumptions of existing
results, is the power-law interaction
\begin{equation}
\label{eq:intro-power-law}
J(x)=a\|x\|_2^{-d-\alpha}\quad(x\neq0),
\qquad a>0,\quad\alpha>0.
\end{equation}
Here $\|\cdot\|_2$ denotes the Euclidean norm on $\mathbb R^d$. This interaction satisfies \eqref{eq:intro-summable-J} for every
$\alpha>0$.

\subsection{Previous results}\label{subsec:intro-previous-results}

At the qualitative level, the result of \citet[Theorem~2]{Newman1980} implies that,
for the zero-field long-range ferromagnetic Ising model with interaction $J$ satisfying \eqref{eq:intro-summable-J}, the central limit theorem holds for every fixed $0\leq\beta<\beta_{\mathrm c}(J)$:
\begin{equation}
\label{eq:intro-newman-clt}
W_{B_R}\xrightarrow[R\to\infty]{\mathrm d}G
\qquad\text{under }\mu^\infty_\beta.
\end{equation}

The quantitative question is how fast this convergence occurs
and under which assumptions a rate can be obtained.

\paragraph{Berry--Esseen bounds.}

For the zero-field long-range ferromagnetic Ising model with
$J\not\equiv0$ satisfying \eqref{eq:intro-summable-J},
the cluster-expansion estimates of \citet{NguyenFernandez2024},
combined with \citet[Theorem~2.4]{DoringJansenSchubert2022},
yield a Berry--Esseen bound throughout the explicit
sufficient interval
\begin{equation}
\label{eq:intro-cluster-temperature-simple}
0\leq\beta<
\frac{\log(1+1/(4e))}{\|J\|_1}.
\end{equation}
Specifically, for every fixed $\beta$ in this interval,
there exists $C_{\beta,J}<\infty$ such that
\[
d_{\mathrm K}(W_\Lambda,G)
\leq C_{\beta,J}|\Lambda|^{-1/2}
\]
for every nonempty finite $\Lambda\subset\mathbb Z^d$,
under both $\mu^{\mathrm f}_{\Lambda,\beta}$ and $\mu^\infty_\beta$.
This explicit sufficient interval is strictly contained in
the subcritical regime. Indeed, the standard susceptibility
estimate gives $\beta_{\mathrm c}(J)\geq\|J\|_1^{-1}$
\citep[Section~2.5]{DuminilCopinTassion2016}, and hence
\[
\frac{\log(1+1/(4e))}{\|J\|_1}
<
\frac{1}{\|J\|_1}
\leq\beta_{\mathrm c}(J).
\]

For the power-law interaction \eqref{eq:intro-power-law},
specializing the general result of
\citet[Theorem~4]{SanchezSegoviaSchneiderAlhambra2025}
to the zero-field long-range ferromagnetic Ising model with interaction $J$ satisfying \eqref{eq:intro-power-law} yields
\begin{align*}
 d_{\mathrm K}(W_\Lambda,G)\leq C_{\beta,J}|\Lambda|^{-1/2} 
\end{align*}
under $\mu^{\mathrm f}_{\Lambda,\beta}$ for every fixed
$\beta$ in the following volume-independent sufficient interval:
\begin{equation}
\label{eq:intro-power-law-temperature}
0\leq\beta<\frac{1}{32e\,g_Ju_{d,\alpha}},
\end{equation}
where
\begin{align*}
g_J&:=\max\left\{
\|J\|_1,\,
\sup_{x\neq0}(1+\|x\|_1)^{d+\alpha}J(x)
\right\},\\
u_{d,\alpha}&:=2^{d+\alpha}
\sum_{x\in\mathbb Z^d}(1+\|x\|_1)^{-d-\alpha}.
\end{align*}
Here $\|x\|_1:=\sum_{i=1}^d|x_i|$ for
$x=(x_1,\ldots,x_d)\in\mathbb R^d$.
This explicit sufficient interval is strictly contained in
the subcritical regime. Indeed, $g_J\geq\|J\|_1$ and
$u_{d,\alpha}\geq1$, while the standard susceptibility estimate
gives $\beta_{\mathrm c}(J)\geq\|J\|_1^{-1}$
\citep[Section~2.5]{DuminilCopinTassion2016}. Hence
\[
\frac{1}{32e\,g_Ju_{d,\alpha}}
\leq
\frac{1}{32e\,\|J\|_1}
<
\frac{1}{\|J\|_1}
\leq
\beta_{\mathrm c}(J).
\]

\paragraph{1-Wasserstein bounds.}
For general interactions $J\not\equiv0$ satisfying
\eqref{eq:intro-summable-J}, the cluster-expansion estimates
of \citet{NguyenFernandez2024}, combined with
\citet[Theorem~1]{Zaitsev2025Transport}, yield
\[
d_{\mathrm W}(W_\Lambda,G)
\leq C_{\beta,J}|\Lambda|^{-1/2}
\]
for every nonempty finite $\Lambda\subset\mathbb Z^d$
and every fixed $\beta$ in the sufficient interval
\eqref{eq:intro-cluster-temperature-simple},
under both $\mu^{\mathrm f}_{\Lambda,\beta}$ and $\mu^\infty_\beta$.

For the power-law interaction \eqref{eq:intro-power-law},
the same rate is available throughout the subcritical
regime under stronger decay assumptions.
Combining the correlation estimates of \citet{Aoun2021} with
the resulting finite-order mixing estimates for Ising spins and
\citet[Theorem~2.3(b)]{LiuAustern2026Mixing} gives
\begin{equation}
\label{eq:intro-previous-wasserstein}
d_{\mathrm W}(W_\Lambda,G)\leq C |\Lambda|^{-1/2},
\qquad \alpha>2d,\qquad
0<\beta<\beta_{\mathrm c}(J),
\end{equation}
under both $\mu^{\mathrm f}_{\Lambda,\beta}$ and $\mu^\infty_\beta$.
In dimension one, the same correlation estimates and
\citet[Theorem~3.1(b)]{DedeckerRio2008} also yield this rate
for interval sums under $\mu^\infty_\beta$ when $\alpha>2$.
The decay assumption $\alpha>2d$ used in these applications
is stronger than the summability condition $\alpha>0$
for the interaction \eqref{eq:intro-power-law}.

In summary, the results reviewed above do not establish
either Berry--Esseen or $1$-Wasserstein bounds of order
$|\Lambda|^{-1/2}$ for the standardized total spin of the
zero-field long-range ferromagnetic Ising model for every
interaction $J$ satisfying \eqref{eq:intro-summable-J}
and every fixed $0\leq\beta<\beta_{\mathrm c}(J)$.

In this paper, we fill this gap by establishing both Berry--Esseen and
$1$-Wasserstein bounds of order $|\Lambda|^{-1/2}$ for every
interaction $J$ satisfying \eqref{eq:intro-summable-J}
and every fixed $0\leq\beta<\beta_{\mathrm c}(J)$.

\subsection{Our contributions}\label{subsec:intro-contributions}

For every interaction $J$ satisfying
\eqref{eq:intro-summable-J} and for every fixed $0\leq\beta<\beta_{\mathrm c}(J)$, we prove
\[
d_{\mathrm K}(W_\Lambda,G)
\leq C_{\beta,J}|\Lambda|^{-1/2},
\qquad
d_{\mathrm W}(W_\Lambda,G)
\leq C_{\beta,J}|\Lambda|^{-1/2}
\]
for every nonempty finite $\Lambda\subset\mathbb Z^d$, under
both $\mu^{\mathrm f}_{\Lambda,\beta}$ and $\mu^\infty_\beta$,
see \cref{thm:free-long-range-new,thm:infinite-long-range-new}.

For the fully periodized interaction on $(\mathbb Z/L\mathbb Z)^d$ (see, \eqref{eq:periodized-J-new}),
we also obtain bounds of order $L^{-d/2}$ in both Kolmogorov and 1-Wasserstein distances for the standardized total spin over the whole torus,
throughout the same subcritical regime;
see \cref{thm:periodic-long-range-new}.
In particular, all three results apply to
\eqref{eq:intro-power-law} for every $a>0$ and $\alpha>0$.

We also give the Berry--Esseen and 1-Wasserstein bounds of order $|\Lambda|^{-1/2}$ for the zero-field nearest-neighbor Ising model with unit interaction $J_{\mathrm{nn}}$ for every fixed $|\beta|<\beta_{\mathrm c}(J_{\mathrm{nn}})$ 
(this covers the ferromagnetic case $\beta>0$, the
antiferromagnetic case $\beta<0$, and the independent case $\beta=0$)
, under free boundary conditions.

Our proof uses the Edwards--Sokal coupling
\citep{EdwardsSokal1988} to represent the total spin as a sum
of conditionally independent cluster signs weighted by cluster
sizes.
A fourth-cumulant identity controls fluctuations of the
conditional variance, while subcritical cluster-size moment
estimates provide the bounds needed for normal approximation.
This approach avoids additional assumptions on spatial decay.
For the periodized model, the proof further requires a
torus-to-lattice comparison and cluster-size estimates that
are uniform in the torus size.

The remaining of the paper is organized as follows.
\Cref{sec:applications-new} states the main results.
\Cref{sec:proof-main-results} develops the finite-graph normal
approximation bounds.
\Cref{sec:proof-applications} contains the proofs of the main
theorems.

\section{Main results}\label{sec:applications-new}

All results in this section concern zero external field.
We give Kolmogorov and $1$-Wasserstein bounds for the standardized total spin $W_{\Lambda}$ defined in \eqref{def:total-spin-normalize}.
We first consider ferromagnetic models with summable interactions.
And then we consider antiferromagnetic and ferromagnetic nearest-neighbor models.

\subsection{Long-range Ising model with free boundary conditions}
\label{subsec:main-ferromagnetic}

Let $d\geq1$ be an integer, and let $J$ satisfy
\eqref{eq:intro-summable-J}.
Write $\|J\|_1:=\sum_{x\in\mathbb Z^d}J(x)$.
We use the susceptibility threshold $\beta_{\mathrm c}(J)$
defined in \eqref{eq:free-susceptibility-critical-point}.

We consider the free-boundary Gibbs measure
$\mu^{\mathrm f}_{\Lambda,\beta}$ defined in
\eqref{eq:intro-long-range-measure}.
For a nonempty finite set $\Lambda\subset\mathbb Z^d$, write
\[
s_\Lambda:=\sqrt{\operatorname{Var}(S_\Lambda)},
\qquad
W_\Lambda=\frac{S_\Lambda}{s_\Lambda}.
\]
The variance is taken under $\mu^{\mathrm f}_{\Lambda,\beta}$.

\begin{theorem}
\label{thm:free-long-range-new}
Fix $0\leq\beta<\beta_{\mathrm c}(J)$.
There exists $C_{\beta,J}<\infty$ such that, for every nonempty
finite set $\Lambda\subset\mathbb Z^d$,
\begin{align}
d_{\mathrm K}(W_\Lambda,G)
&\leq C_{\beta,J}|\Lambda|^{-1/2},
\label{eq:free-long-range-BE-new}\\
d_{\mathrm W}(W_\Lambda,G)
&\leq C_{\beta,J}|\Lambda|^{-1/2}.
\label{eq:free-long-range-W-new}
\end{align}
Both bounds hold under $\mu^{\mathrm f}_{\Lambda,\beta}$.
The constant depends only on $\beta$ and $J$.
\end{theorem}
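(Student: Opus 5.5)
We will use the Edwards--Sokal coupling for $\mu^{\mathrm f}_{\Lambda,\beta}$. Each pair $\{x,y\}\subset\Lambda$ is an edge of weight $p_{xy}=1-e^{-2\beta J(x-y)}$, and we obtain an FK random-cluster configuration $\omega$ with $q=2$. Conditionally on $\omega$, every open cluster $\mathcal C$ receives an independent uniform sign $\varepsilon_{\mathcal C}\in\{\pm1\}$, so that
\[
S_\Lambda=\sum_{\mathcal C}\varepsilon_{\mathcal C}|\mathcal C| .
\]
Conditionally on $\omega$, $S_\Lambda$ is therefore a Rademacher sum with conditional variance $V(\omega)=\sum_{\mathcal C}|\mathcal C|^2$, and $s_\Lambda^2=\mathbb E V$. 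Since $|\mathcal C|\ge 1$, we have $V\ge|\Lambda|$ and hence $s_\Lambda^2\ge|\Lambda|$. Conversely, $\mathbb E V=\sum_{x,y\in\Lambda}\mathbb E_\Lambda[\sigma_x\sigma_y]\le|\Lambda|\chi_J(\beta)$ by Griffiths monotonicity in the volume. So $s_\Lambda^2\asymp|\Lambda|$ uniformly in $\Lambda$.

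Next we apply Berry--Esseen conditionally. Write $W_\Lambda=\sqrt{V/s_\Lambda^2}\,Z_\omega$, where
\[
d_{\mathrm K}(Z_\omega,G)\le C\sum_{\mathcal C}|\mathcal C|^3/V^{3/2}\le C\sum_{\mathcal C}|\mathcal C|^3/|\Lambda|^{3/2}.
\]
Averaging over $\omega$ gives the first error term
\[
\mathbb E\sum_{\mathcal C}|\mathcal C|^3=\sum_{x\in\Lambda}\mathbb E|\mathcal C_x|^2 .
\]
We then compare the mixture $\mathbb E\,\Phi(t s_\Lambda/\sqrt V)$ with $\Phi(t)$. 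Since $\sup_t|\Phi(t a)-\Phi(t)|\le C|a^2-1|$ (using $a\ge$ some positive constant, or treating it via the second moment), the second error term is
\[
\mathbb E|V-\mathbb EV|/s_\Lambda^2\le\sqrt{\operatorname{Var}V}/s_\Lambda^2 .
\]
The same two terms control $d_{\mathrm W}$: the Wasserstein distance of the scale mixture is at most $\mathbb E|\sqrt{V}/s_\Lambda-1|\,\mathbb E|G|$, plus the conditional Wasserstein bound.

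To control $\operatorname{Var}V$ without cluster correlation estimates, we use the fourth-cumulant identity. Conditional Rademacher moments give
\[
\mathbb E[S^4\mid\omega]=3V^2-2\sum_{\mathcal C}|\mathcal C|^4 .
\]
Taking expectations,
\[
3\operatorname{Var}V=\kappa_4(S_\Lambda)+2\,\mathbb E\sum_{\mathcal C}|\mathcal C|^4 .
\]
The Lebowitz inequality gives $\kappa_4(S_\Lambda)\le0$ for ferromagnetic Ising spins, so
\[
\operatorname{Var}V\le\tfrac23\sum_{x\in\Lambda}\mathbb E|\mathcal C_x|^3 .
\]
Both error terms are then of order $|\Lambda|^{-1/2}$, provided that
\[
\sup_\Lambda\sup_{x\in\Lambda}\mathbb E_\Lambda|\mathcal C_x|^k\le C_{\beta,J}\qquad\text{for }k=2,3.
\]

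The main obstacle is this uniform moment bound on subcritical FK cluster sizes at every $\beta<\beta_{\mathrm c}(J)$. Finite susceptibility gives only $\mathbb E|\mathcal C_x|\le\chi_J(\beta)$, since $\mathbb P(x\leftrightarrow y)=\langle\sigma_x\sigma_y\rangle$. For higher moments we first use monotonicity: the free FK measure on $\Lambda$ is stochastically dominated by the infinite-volume free FK measure restricted to $\Lambda$, so it suffices to bound moments of $|\mathcal C_0|$ in infinite volume. There we use tree-graph type bounds. Tree-graph inequalities for FK-Ising connectivities can be derived from the Aizenman random-current representation, through the switching lemma and Simon--Lieb-type bounds, and should give
\[
\mathbb P(0\leftrightarrow y,\,0\leftrightarrow z)\le\sum_u\langle\sigma_0\sigma_u\rangle\langle\sigma_u\sigma_y\rangle\langle\sigma_u\sigma_z\rangle ,
\]
which implies $\mathbb E|\mathcal C_0|^2\le\chi^3$, and similarly $\mathbb E|\mathcal C_0|^3\le C\chi^5$. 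Alternatively, the sharpness results of Duminil-Copin--Tassion could yield exponential tails for $|\mathcal C_0|$ whenever $\chi<\infty$, but the tree-graph route is the one I would try first, since it needs only $\chi_J(\beta)<\infty$ and no spatial decay of $J$. With these bounds in hand, combining the two error terms finishes both \eqref{eq:free-long-range-BE-new} and \eqref{eq:free-long-range-W-new}.
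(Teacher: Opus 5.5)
Your reduction is the same as the paper's. It uses the Edwards--Sokal representation $S_\Lambda=\sum_C|C|\varepsilon_C$, conditional Berry--Esseen and its Wasserstein analogue with $V=R_2\ge|\Lambda|$, the fourth-cumulant identity plus Lebowitz's inequality to get $\operatorname{Var}V\le\frac23\mathbb E R_4$, and domination of $\phi^{\mathrm f}_{\Lambda,\beta,2}$ by the infinite-volume free FK measure.

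The gap is the step you call the main obstacle, which the proposal does not resolve. The bound $\phi(0\leftrightarrow y,\,0\leftrightarrow z)\le\sum_u\tau(0,u)\tau(u,y)\tau(u,z)$ is only asserted (``should give''), and neither suggested route supplies it.
\begin{itemize}
\item In Bernoulli percolation this tree-graph bound comes from the BK inequality, which is not available for the positively associated $q=2$ random-cluster measure.
\item The switching lemma and Simon--Lieb control Ising spin correlations and connectivities of random currents with prescribed sources. The three-point FK connectivity is not an Ising correlation, unlike the two-point one, so nothing transfers without a new argument.
\item The higher-order claim $\mathbb E|C_0|^3\le C\chi^5$ has the same problem.
\item The Duminil-Copin--Tassion alternative does not give the tail either. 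Their Ising argument bounds $\langle\sigma_0\sigma_x\rangle$, i.e.\ FK two-point connectivities, not the law of FK cluster volumes.
\end{itemize}

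The missing ingredient, used in \cref{prop:critical-equivalence}, is Hutchcroft's sharpness theorem for random-cluster models on transitive weighted graphs with finite total incident weight. It is proved via a ghost field and the OSSS inequality, with no BK input, and gives $\phi^{\mathrm f}_{\beta,2}(|C(0)|\ge m)\le Ae^{-am}$ for $\beta<\beta_{\mathrm c}^{\mathrm{FK},\mathrm f}$.

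To reach every $\beta<\beta_{\mathrm c}(J)$ you also need $\beta_{\mathrm c}^{\mathrm{FK},\mathrm f}=\beta_{\mathrm c}(J)$.
\begin{itemize}
\item One inequality follows from $\chi_J(\beta)=\mathbb E|C(0)|$: finite susceptibility rules out an infinite cluster, and percolation is monotone in $\beta$.
\item The exponential tail itself gives $\chi_J<\infty$ below $\beta_{\mathrm c}^{\mathrm{FK},\mathrm f}$, which is the reverse inequality.
\end{itemize}
Substituting this for your tree-graph step makes every $M_r(\beta,J)$ finite, and the rest of your argument then goes through as in the paper.
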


\begin{remark}
\label{rem:kolmogorov-rate}
For every fixed $J$ satisfying \eqref{eq:intro-summable-J}
and every fixed $0\leq\beta<\beta_{\mathrm c}(J)$, the order
$|\Lambda|^{-1/2}$ of the Kolmogorov bound in
\eqref{eq:free-long-range-BE-new} is optimal.
For every nonempty finite set $\Lambda\subset\mathbb Z^d$, we have
\begin{equation}
\label{eq:free-K-lower-bound}
d_{\mathrm K}(W_\Lambda,G)
\geq
\frac{3}{8\bigl(2\sqrt{\chi_J(\beta)}+1\bigr)}\,|\Lambda|^{-1/2}.
\end{equation}
This bound holds under $\mu^{\mathrm f}_{\Lambda,\beta}$.
We prove \eqref{eq:free-K-lower-bound} in
\cref{sec:free-long-range}.
\end{remark}

We also consider the infinite-volume free-boundary Gibbs measure
$\mu^\infty_\beta$, defined by \eqref{eq:intro-infinite-volume}.
In this case, the variance is taken under $\mu^\infty_\beta$. And we can also give the Berry--Esseen and $1$-Wasserstein bounds as follows.

\begin{theorem}
\label{thm:infinite-long-range-new}
Fix $0\leq\beta<\beta_{\mathrm c}(J)$.
There exists $C_{\beta,J}^{\infty}<\infty$ such that, for every
nonempty finite set $\Lambda\subset\mathbb Z^d$,
\begin{align}
d_{\mathrm K}(W_\Lambda,G)
&\leq C_{\beta,J}^{\infty}|\Lambda|^{-1/2},
\label{eq:infinite-long-range-BE-new}\\
d_{\mathrm W}(W_\Lambda,G)
&\leq C_{\beta,J}^{\infty}|\Lambda|^{-1/2}.
\label{eq:infinite-long-range-W-new}
\end{align}
Both bounds hold under $\mu^\infty_\beta$.
The constant depends only on $\beta$ and $J$.
\end{theorem}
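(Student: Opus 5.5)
Under $\mu^\infty_\beta$ I would use the Edwards--Sokal (random-cluster) representation of the infinite-volume free measure on all of $\mathbb Z^d$. Let $\phi^{\mathrm f}_\beta$ be the infinite-volume free FK measure with edge probabilities $p_{xy}=1-e^{-2\beta J(x-y)}$ and cluster weight $q=2$, obtained as the weak limit of the finite-volume free FK measures on $B_R$. Since $\beta<\beta_{\mathrm c}(J)$, we have $\chi_J(\beta)=\sum_x\phi^{\mathrm f}_\beta(0\leftrightarrow x)<\infty$, so all clusters are a.s.\ finite. Given the bond configuration $\omega$, the spins are obtained by assigning i.i.d.\ fair signs $\varepsilon_C$ to the clusters $C$. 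Hence
\[
S_\Lambda=\sum_{C}\varepsilon_C\,|C\cap\Lambda|,
\]
a sum of conditionally independent symmetric terms given $\omega$. Here only the clusters meeting $\Lambda$ contribute, and clusters may extend outside $\Lambda$. Write $n_C=|C\cap\Lambda|$, $V(\omega)=\sum_C n_C^2$ (the conditional variance), and $s_\Lambda^2=\mathbb E V$.

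Conditionally on $\omega$, the Berry--Esseen inequality for independent Rademacher-weighted sums gives
\[
d_{\mathrm K}\bigl(S_\Lambda/\sqrt{V},G\mid\omega\bigr)\leq C\,\frac{\sum_C n_C^3}{V^{3/2}},
\]
with the analogous Wasserstein bound. I would then pass from $\sqrt V$ to $s_\Lambda$, which costs a term controlled by $\mathbb E|V-s_\Lambda^2|/s_\Lambda^2$ in $d_{\mathrm W}$. In $d_{\mathrm K}$, the scaling smoothness of $\Phi$ gives a bound by $\mathbb E\bigl|V/s_\Lambda^2-1\bigr|$, plus the probability that $V<s_\Lambda^2/2$. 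This is the same finite-graph scheme as in \cref{sec:proof-main-results}, and I would invoke those finite-graph bounds as stated there. The needed inputs are then:
\begin{itemize}
\item[(i)] $s_\Lambda^2\geq|\Lambda|$;
\item[(ii)] $\mathbb E\sum_C n_C^3\leq C|\Lambda|$;
\item[(iii)] $\operatorname{Var}(V)\leq C|\Lambda|$.
\end{itemize}
Given these, Chebyshev yields $\mathbb E|V-s_\Lambda^2|\leq C|\Lambda|^{1/2}$ and $\mathbb P(V<s_\Lambda^2/2)\leq C/|\Lambda|$. For the third-moment term, on $\{V\geq s_\Lambda^2/2\}$ we have $\sum_C n_C^3/V^{3/2}\leq C\sum_C n_C^3/|\Lambda|^{3/2}$, whose expectation is $O(|\Lambda|^{-1/2})$ by (ii). Altogether this gives the rate $|\Lambda|^{-1/2}$.

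The inputs are proved as follows.
\begin{itemize}
\item[(i)] This follows from $V\geq\sum_C n_C=|\Lambda|$, or from Griffiths' inequality $\mathbb E\sigma_x\sigma_y\geq0$.
\item[(ii)] Write $\sum_C n_C^3=\sum_{x,y,z\in\Lambda}\mathbf 1\{x\leftrightarrow y\leftrightarrow z\}$. Bound the three-point connectivity by a tree-graph inequality, $\phi(x\leftrightarrow y\leftrightarrow z)\leq\sum_w\tau(x,w)\tau(y,w)\tau(z,w)$ with $\tau(x,w)=\phi(x\leftrightarrow w)=\mathbb E[\sigma_x\sigma_w]$. For Ising FK this is available via the Aizenman-type tree diagram bound, i.e.\ the Gaussian domination-free bound for the $q=2$ random current/FK representation. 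Summing over $y,z,w$ gives $|\Lambda|\chi^3$.
\item[(iii)] Note $\operatorname{Var}(V)$ is a sum over $x,y,u,v\in\Lambda$ of $\operatorname{Cov}\bigl(\mathbf 1\{x\leftrightarrow y\},\mathbf 1\{u\leftrightarrow v\}\bigr)$. Here I would use the fourth-cumulant identity from \cref{sec:proof-main-results}. Conditionally on $\omega$, $\mathbb E[S^4\mid\omega]=3V^2-2\sum_C n_C^4$, so
\[
\operatorname{Var}(V)=\tfrac13\Bigl(\kappa_4(S_\Lambda)+2\,\mathbb E\sum_C n_C^4\Bigr),
\]
where $\kappa_4$ is the fourth cumulant of the spin sum. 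By Lebowitz's inequality $\kappa_4\leq0$, while the same tree-graph argument gives $\mathbb E\sum_C n_C^4\leq C|\Lambda|\chi^5$. Hence $\operatorname{Var}(V)\leq C|\Lambda|$.
\end{itemize}

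The main obstacle is justifying these ingredients under $\mu^\infty_\beta$ rather than on a finite graph. Lebowitz's inequality, the tree-graph bounds and the cumulant identity all hold for the free measure on $B_R$ with $\Lambda\subset B_R$, with constants depending only on $\chi_J(\beta)$. Here I use that the finite-volume two-point functions are dominated by the infinite-volume ones by Griffiths monotonicity. Every quantity involved is a polynomial moment of $S_\Lambda$, or of finitely many connection events, depending only on the marginal law on $\Lambda$, which converges as $R\to\infty$ by \eqref{eq:intro-infinite-volume}. Likewise $d_{\mathrm K}$ and $d_{\mathrm W}$ of $W_\Lambda$ are continuous under this convergence, since $S_\Lambda$ takes finitely many values and $s_\Lambda(R)\to s_\Lambda$. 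So the cleanest route is to prove the bound for $W_\Lambda$ under $\mu^{\mathrm f}_{B_R,\beta}$, uniformly in $R\supseteq\Lambda$, with variance taken under that measure, and then let $R\to\infty$. This is the finite-graph result of \cref{sec:proof-main-results} applied to the graph $B_R$ with the sum restricted to $\Lambda$, which is why the bounds must be stated for sub-sums rather than the whole vertex set.
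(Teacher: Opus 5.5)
Your overall architecture matches the paper's proof. The paper also uses Edwards--Sokal on boxes $D_m\supseteq\Lambda$ with weights $a_x=\mathbf 1_{\{x\in\Lambda\}}$, a conditional Berry--Esseen/Wasserstein bound, $s^2\geq|\Lambda|$, and the identity $3\operatorname{Var}(V)=\kappa_4+2\mathbb E\sum_C n_C^4$ with Lebowitz's inequality. It then passes to $\mu^\infty_\beta$ using that $S_\Lambda$ takes finitely many values.

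The gap is in the one genuinely probabilistic input: your (ii), and the bound on $\mathbb E\sum_C n_C^4$ in (iii). These are moments of FK-Ising cluster sizes. You justify them with the tree-graph inequality $\phi(x\leftrightarrow y\leftrightarrow z)\leq\sum_w\tau(x,w)\tau(y,w)\tau(z,w)$ and a four-point analogue. That inequality is not an available tool for the $q=2$ random-cluster measure:
\begin{itemize}
\item For Bernoulli percolation it comes from the BK inequality, but BK fails for the random-cluster measure with $q=2$.
\item Aizenman's random-current tree-diagram bound controls the spin Ursell function $U_4$, not multi-point FK connection probabilities.
\end{itemize}
Spin correlation inequalities cannot replace it either. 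The quantity $\sum_C n_C^3$ is not a spin observable, since conditional spin moments only see even powers of the $n_C$. In the cumulant identity, Aizenman's bound controls only $|\kappa_4|=O(\chi^4|\Lambda|)$. Both $\operatorname{Var}(V)$ and $\mathbb E\sum_C n_C^4$ are FK quantities, so the identity merely trades one for the other and gives no upper bound on either. Stochastic domination of FK-Ising by Bernoulli percolation with the same $p_e$ only works below the Bernoulli percolation threshold, which in general does not reach $\beta_{\mathrm c}(J)$. So as written, the finiteness of the needed cluster moments throughout $0\leq\beta<\beta_{\mathrm c}(J)$ is asserted rather than proved. This is exactly the hard part for long-range $J$, where two-point functions decay only polynomially.

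The paper fills this step as follows.
\begin{itemize}
\item It shows $\chi_J(\beta)=\mathbb E|C(0)|$ under $\phi^{\mathrm f}_{\beta,2}$, which gives $\beta_{\mathrm c}^{\mathrm{FK},\mathrm f}=\beta_{\mathrm c}(J)$.
\item It invokes Hutchcroft's theorem, an OSSS plus ghost-field argument. On a transitive weighted graph with finite total incident weight, this gives an exponential tail for the subcritical free FK cluster volume, $\phi^{\mathrm f}_{\beta,2}(|C(0)|\geq m)\leq A e^{-am}$.
\item Domain monotonicity, from closing edges and positive association, yields $\phi^{\mathrm f}_{D,\beta,2}(|C_D(x)|\geq m)\leq\phi^{\mathrm f}_{\beta,2}(|C(0)|\geq m)$ uniformly in finite $D\ni x$. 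Hence $\mathbb E|C_D(x)|^r\leq M_r(\beta,J)<\infty$.
\item Using $|C\cap\Lambda|\leq|C|$, these moments bound $\mathbb E\sum_C n_C^3$ and $\mathbb E\sum_C n_C^4$ by $M_2|\Lambda|$ and $M_3|\Lambda|$.
\end{itemize}
Your claimed dependence of the constants only on $\chi_J(\beta)$ would be stronger than the paper's result, but it rests entirely on the unproved tree-graph step. Substituting the exponential-tail input for that step closes your argument.
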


\begin{remark}[Power-law interactions]
\label{rem:power-law-applications}
Let $a>0$ and $\alpha>0$. Consider the power-law interaction
\[
J(0)=0,
\qquad
J(x)=a\|x\|_2^{-d-\alpha},
\quad x\in\mathbb Z^d\setminus\{0\},
\]
as in \eqref{eq:intro-power-law}.
This interaction satisfies \eqref{eq:intro-summable-J}
for every $\alpha>0$.

Fix $0\leq\beta<\beta_{\mathrm c}(J)$.
Let $\Lambda\subset\mathbb Z^d$ be any nonempty finite set.
For the finite-volume model with free boundary conditions,
\cref{thm:free-long-range-new} gives Kolmogorov and
$1$-Wasserstein bounds of order $|\Lambda|^{-1/2}$
for the standardized total spin.
The same rates hold under the infinite-volume free-boundary
Gibbs measure by \cref{thm:infinite-long-range-new}.
\end{remark}

\begin{remark}[Boxes]
\label{rem:box-rates}
Let $J$ satisfy \eqref{eq:intro-summable-J}, and fix
$0\leq\beta<\beta_{\mathrm c}(J)$.
For an integer $n\geq1$, set
\[
Q_n:=\{1,\ldots,n\}^d.
\]
Since $|Q_n|=n^d$, the Kolmogorov and $1$-Wasserstein bounds
in \cref{thm:free-long-range-new} have order $n^{-d/2}$
under $\mu^{\mathrm f}_{Q_n,\beta}$.
Under $\mu^\infty_\beta$, \cref{thm:infinite-long-range-new}
gives order $n^{-d/2}$ in both distances.
\end{remark}

\subsection{Fully periodized models on tori}\label{subsec:main-periodized}
For an integer $L\geq1$, define
\[
\mathbb T_L^d:=(\mathbb Z/L\mathbb Z)^d,
\]
thus $|\mathbb T_L^d|=L^d$. For $x\in\mathbb Z^d$, define
\[
\bar x:=x+L\mathbb Z^d
=\{x+Lz:z\in\mathbb Z^d\}.
\]
The set $\bar x$ is a site of $\mathbb T_L^d$.
The lattice point $x$ is called a representative of $\bar x$.
Two lattice points $x,y$ represent the same torus site if and
only if $x_i-y_i$ is an integer multiple of $L$ for every
$i=1,\ldots,d$.
For distinct $\bar x,\bar y\in\mathbb T_L^d$, choose
representatives $x,y\in\mathbb Z^d$ and set
\begin{equation}
\label{eq:periodized-J-new}
J_L(\bar x-\bar y)
:=\sum_{u\in\mathbb Z^d}J(x-y+Lu).
\end{equation}
This sums the interaction strengths over all lattice displacements
that give the same torus displacement.
The series converges and does not depend on the chosen representatives.

The zero-field Gibbs measure on the torus is
\begin{equation}
\label{eq:periodic-measure-new}
\mu^{\mathrm{per}}_{L,\beta}(\sigma)
=\frac{1}{Z^{\mathrm{per}}_{L,\beta}}
\exp\!\left\{
\beta\sum_{\bar x,\bar y\in\mathbb T_L^d}
J_L(\bar x-\bar y)\sigma_{\bar x}\sigma_{\bar y}
\right\},
\qquad
\sigma\in\{-1,+1\}^{\mathbb T_L^d}.
\end{equation}
Here $\beta\geq0$, and $Z^{\mathrm{per}}_{L,\beta}$ is the
normalizing constant.
By convention, the sum in \eqref{eq:periodic-measure-new}
runs over unordered pairs of distinct sites of $\mathbb T_L^d$,
with each pair counted once.
Define
\[
S_L:=\sum_{\bar x\in\mathbb T_L^d}\sigma_{\bar x},
\qquad
s_L:=\sqrt{\operatorname{Var}(S_L)},
\qquad
W_L:=\frac{S_L}{s_L}.
\]
The variance is taken under $\mu^{\mathrm{per}}_{L,\beta}$.
This measure is invariant under reversing all spins, so
$\mathbb E S_L=0$.
In the following theorem, $\beta_{\mathrm c}(J)$ denotes the
susceptibility threshold of the infinite-volume model with
interaction $J$, defined in \eqref{eq:free-susceptibility-critical-point}.

\begin{theorem}
\label{thm:periodic-long-range-new}
Fix $0\leq\beta<\beta_{\mathrm c}(J)$.
There exists $C_{\beta,J}^{\mathrm{per}}<\infty$ such that,
for every integer $L\geq1$,
\begin{align}
d_{\mathrm K}(W_L,G)
&\leq C_{\beta,J}^{\mathrm{per}}L^{-d/2},
\label{eq:periodic-BE-new}\\
d_{\mathrm W}(W_L,G)
&\leq C_{\beta,J}^{\mathrm{per}}L^{-d/2}.
\label{eq:periodic-W-new}
\end{align}
Both bounds hold under $\mu^{\mathrm{per}}_{L,\beta}$.
The constant depends only on $\beta$ and $J$.
\end{theorem}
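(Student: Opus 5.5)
We will prove the statement through the Edwards--Sokal (FK random-cluster) representation on the finite graph $\mathbb T_L^d$ with edge weights $p_{\bar x\bar y}=1-e^{-2\beta J_L(\bar x-\bar y)}$. Conditionally on the random-cluster configuration $\omega$, with clusters $\mathcal C_1,\dots,\mathcal C_N$, we have $S_L=\sum_i\varepsilon_i|\mathcal C_i|$, where the signs $\varepsilon_i$ are i.i.d.\ Rademacher. The finite-graph normal approximation bounds developed for this cluster-sign representation (the section on finite-graph bounds) should give estimates of the form
\[
d_{\mathrm K}(W_L,G)+d_{\mathrm W}(W_L,G)
\lesssim \frac{\mathbb E\sum_i|\mathcal C_i|^3}{s_L^3}
+\frac{\sqrt{\operatorname{Var}\bigl(\sum_i|\mathcal C_i|^2\bigr)}}{s_L^2}.
\]
The conditional variance is $V=\sum_i|\mathcal C_i|^2$ and $s_L^2=\mathbb E V$. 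The fluctuation of $V$ is handled by the fourth-cumulant identity
\[
\kappa_4(S_L)=3\operatorname{Var}(V)-2\,\mathbb E\sum_i|\mathcal C_i|^4,
\]
so that $\operatorname{Var}(V)$ is bounded by $\kappa_4(S_L)$ plus fourth cluster moments. Everything therefore reduces to three inputs, each uniform in $L$:
\begin{itemize}
\item[(a)] $s_L^2\ge cL^d$;
\item[(b)] $\mathbb E\sum_{\bar x}|\mathcal C(\bar x)|^k\le C_kL^d$ for $k\le 3$;
\item[(c)] $|\kappa_4(S_L)|\le CL^d$.
\end{itemize}
Since $\sum_i|\mathcal C_i|^{k+1}=\sum_{\bar x}|\mathcal C(\bar x)|^k$, inputs (b) and (c) give $\operatorname{Var}(V)\lesssim L^d$. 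Combined with (a), this yields the rate $L^{d}/L^{3d/2}=L^{-d/2}$.

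Step (a) is easy. By the Griffiths inequalities, $\mathbb E\sigma_{\bar x}\sigma_{\bar y}\ge0$, so $s_L^2\ge L^d$.

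For (b), we use translation invariance on the torus: $\mathbb E\sum_{\bar x}|\mathcal C(\bar x)|^k=L^d\,\mathbb E|\mathcal C(\bar 0)|^k$. It therefore suffices to bound cluster-size moments at the origin uniformly in $L$.

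\textbf{Torus-to-lattice comparison.} This is the main obstacle. The first moment is $\mathbb E|\mathcal C(\bar0)|=\sum_{\bar x}\langle\sigma_{\bar0}\sigma_{\bar x}\rangle^{\mathrm{per}}_L$, the torus susceptibility. We need this to be at most $C$ uniformly in $L$ whenever $\beta<\beta_{\mathrm c}(J)$. I would attempt this via a covering/unfolding argument. The periodized model is the image of the lattice model under the projection $x\mapsto\bar x$. A random-walk or loop (random-current) expansion of $\langle\sigma_{\bar0}\sigma_{\bar x}\rangle$ on the torus lifts to paths on $\mathbb Z^d$ with the same weights, because $J_L$ is exactly the sum of $J$ over the lifts. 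This should give a bound of the form
\[
\langle\sigma_{\bar0}\sigma_{\bar x}\rangle^{\mathrm{per}}_L\le\sum_{u}\langle\sigma_0\sigma_{x+Lu}\rangle_{\beta}^{\infty}
\]
or a comparable inequality. If a direct lifting inequality fails, the fallback is to use the Duminil-Copin--Tassion characterization of $\beta_{\mathrm c}$. Sharpness provides a finite set $S\ni0$ with $\varphi_\beta(S)<1$. For $L$ large, $S$ embeds injectively in the torus, and the same $\varphi$-argument run on the torus (the $\varphi$-quantity on the torus is bounded by its lattice counterpart plus the wrap-around contributions, which are small by summability of $J$) then gives an exponential tail for the cluster at $\bar0$, uniformly in $L\ge L_0$. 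Small $L$ are finitely many cases and are absorbed into the constant. An exponential-type tail for $|\mathcal C(\bar0)|$ gives all moments directly.

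Otherwise, higher moments follow from the tree-graph bounds. By BK-type inequalities for subcritical clusters,
\[
\mathbb E|\mathcal C(0)|^k\le C_k\bigl(\mathbb E|\mathcal C(0)|\bigr)^{2k-1},
\]
valid for the random-cluster model through the Aizenman--Newman inequalities applied to the Ising correlations.

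For (c), the Lebowitz inequality bounds $|\kappa_4|$ by sums of products of pair correlations,
\[
|\kappa_4(S_L)|\le C\sum_{x,y,z,w}\langle\sigma_x\sigma_y\rangle\langle\sigma_z\sigma_w\rangle\,\mathbf 1\{\cdots\},
\]
which is $\lesssim L^d\chi_L^3$ with $\chi_L$ the torus susceptibility. Equivalently, it follows from the cluster moment bounds above.

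Inserting (a)--(c) into the finite-graph theorem then gives both (eq:periodic-BE-new) and (eq:periodic-W-new).
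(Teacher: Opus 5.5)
Your reduction matches the paper's. Since $s_L^2\ge L^d$, everything comes down to $\sup_L\mathbb E|C_L(\bar0)|^k<\infty$ for $k=2,3$. Input (c) is superfluous: Lebowitz gives $U_4\le0$, hence $\kappa_4(S_L)\le0$ and $3\operatorname{Var}(V)\le2\mathbb E\sum_i|\mathcal C_i|^4$. The pair-correlation bound on $|U_4|$ you have in mind is Aizenman's tree-diagram bound, not Lebowitz's.

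\textbf{The gap is in how you get the higher moments.} Your lifting sketch for the torus-to-lattice inequality is not yet an argument: a torus current with sources $\{\bar0,\bar x\}$ does not lift to a lattice current with the right sources and weights. Your Duminil-Copin--Tassion fallback does control the first moment. Simon--Lieb holds on any finite ferromagnetic graph, so $\chi_L\le|S|/(1-\varphi^{(L)}_\beta(\bar S))$. Also $\limsup_L\varphi^{(L)}_\beta(\bar S)\le\varphi_\beta(S)<1$, by subadditivity of $\tanh$ and continuity in the finitely many couplings on $\bar S$. This grants the identification of the DCT threshold with $\beta_c(J)$ for summable $J$.

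But the $\varphi$-argument controls only two-point functions. It bounds $\mathbb E|C_L(\bar0)|$, not the law of $|C_L(\bar0)|$, so your claim that it yields an exponential cluster tail is unfounded. Your other route, $\mathbb E|C(0)|^k\le C_k(\mathbb E|C(0)|)^{2k-1}$ ``by BK-type inequalities'', is not available either:
\begin{itemize}
\item the BK inequality does not hold for the $q=2$ random-cluster measure;
\item the Aizenman--Newman tree-graph bounds are for independent percolation;
\item Ising tree-diagram bounds concern spin correlations, whereas the identity $2\mathbb E\sum_i|\mathcal C_i|^4=3\operatorname{Var}(V)-\kappa_4(S_L)$ shows that recovering the cluster moments from spin data is circular.
\end{itemize}

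\textbf{The missing ingredient} is a sharpness-type differential inequality on the finite torus. It converts a uniform susceptibility bound at a larger parameter into an exponential volume tail. The paper runs Hutchcroft's ghost-field OSSS argument on the finite transitive weighted graph $(\mathbb T_L^d,E_L)$ with weights $\mathcal J_e=2J_L$. The derivative formula and the decision-forest OSSS inequality for the ghost event give $\mathfrak C_L(t)\frac{d}{dt}\log\phi_t(|C_L(\bar0)|\ge n)\ge\frac12\bigl(n/\chi_L(t)-1\bigr)$. Integrating over $[\beta,\beta_0]$, with $\beta<\beta_0<\beta_c(J)$, gives $\phi^{\mathrm{per}}_{L,\beta,2}(|C_L(\bar0)|\ge m)\le A_0e^{-a_0m}$, where $a_0=(\beta_0-\beta)/(2\beta_0e^{2\beta_0\|J\|_1}\chi_J(\beta_0))$. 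This uses $\mathfrak C_L(\beta_0)\le\beta_0e^{2\beta_0\|J\|_1}$ and $\chi_L(\beta_0)\le\chi_J(\beta_0)$.

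The latter bound is the torus-to-lattice comparison applied at $\beta_0$ rather than at $\beta$. The paper proves it with random currents: an exploration on the cover keeps one lift of each torus vertex and queries one lift of each torus edge, followed by a mixed switching lemma. Your DCT bound at $\beta_0$ could replace that comparison (uniform for $L\ge L_0$, with the finitely many small tori handled trivially by $\chi_L\le L^d$). Without the differential-inequality step, however, your proposal does not reach $\sup_L\mathbb E|C_L(\bar0)|^3<\infty$.
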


\subsection{Nearest-neighbor ferromagnetic and antiferromagnetic models}
\label{subsec:main-classical}

We now consider the classical nearest-neighbor Ising model.
For a nonempty finite set $\Lambda\subset\mathbb Z^d$, its Gibbs measure
with free boundary conditions is
\begin{equation}
\label{eq:intro-nn-measure}
 \mu^{\mathrm f}_{\Lambda,\beta}(\sigma)
 =\frac{1}{Z^{\mathrm f}_{\Lambda,\beta}}
 \exp\!\left\{
   \beta\sum_{\substack{x,y\in\Lambda\\ \|x-y\|_1=1}}
   \sigma_x\sigma_y
 \right\},
 \qquad \sigma\in\{-1,+1\}^{\Lambda}.
\end{equation}
By convention, the sum in \eqref{eq:intro-nn-measure}
runs over unordered nearest-neighbor pairs in $\Lambda$,
with each pair counted once.
Free boundary conditions mean that interactions between $\Lambda$ and
its complement are omitted. And the infinite-volume free-boundary Gibbs measure is also defined by \eqref{eq:intro-infinite-volume}.

We allow both signs of $\beta$: positive values give ferromagnetic
interactions, and negative values give antiferromagnetic interactions.
This measure is invariant under reversing all spins, so $\mathbb E S_\Lambda=0$ for both signs of $\beta$.
Write
\[
J_{\mathrm{nn}}(x):=\mathbf1_{\{\|x\|_1=1\}},
\qquad
\beta_{\mathrm c}:=\beta_{\mathrm c}(J_{\mathrm{nn}}),
\]
here $\beta_{\mathrm c}$ is the nearest-neighbor ferromagnetic
threshold used below.
We first consider the free-boundary measure $\mu^{\mathrm f}_{\Lambda,\beta}$.

\begin{theorem}
\label{thm:classical-ising}
Fix $-\beta_{\mathrm c}<\beta<\beta_{\mathrm c}$.
There exists $C_{\beta,d}<\infty$ such that, for every nonempty
finite set $\Lambda\subset\mathbb Z^d$, we have
$s_\Lambda^2>0$ and
\begin{align}
d_{\mathrm K}(W_\Lambda,G)
&\leq C_{\beta,d}|\Lambda|^{-1/2},
\label{eq:classical-ising-BE}\\
d_{\mathrm W}(W_\Lambda,G)
&\leq C_{\beta,d}|\Lambda|^{-1/2}.
\label{eq:classical-ising-W-bound}
\end{align}
Both bounds hold under $\mu^{\mathrm f}_{\Lambda,\beta}$.
The constant depends only on $\beta$ and $d$.
\end{theorem}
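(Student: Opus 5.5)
The plan is to reduce the antiferromagnetic case $\beta<0$ to the ferromagnetic case by the gauge (checkerboard) transformation, and to treat $0\le\beta<\beta_{\mathrm c}$ as the special case $J=J_{\mathrm{nn}}$ of \cref{thm:free-long-range-new}. For $\beta\ge0$ nothing new is needed: $J_{\mathrm{nn}}$ satisfies \eqref{eq:intro-summable-J}, so both \eqref{eq:classical-ising-BE} and \eqref{eq:classical-ising-W-bound} follow directly. Positivity of $s_\Lambda^2$ in this case holds because, by the Griffiths inequality, $\mathbb E\sigma_x\sigma_y\ge0$, so $s_\Lambda^2\ge|\Lambda|$.

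For $\beta<0$, let $\varepsilon_x=(-1)^{x_1+\cdots+x_d}$ and $\tau_x=\varepsilon_x\sigma_x$. Since $\mathbb Z^d$ is bipartite, $\varepsilon_x\varepsilon_y=-1$ for nearest neighbours. Hence $\tau$ has law $\mu^{\mathrm f}_{\Lambda,|\beta|}$, and $S_\Lambda=\sum_{x\in\Lambda}\varepsilon_x\tau_x$. This is no longer the total spin of a ferromagnet, so the ferromagnetic theorem cannot be quoted directly. Instead I would rerun the finite-graph argument of \cref{sec:proof-main-results} with signed weights.

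The Edwards--Sokal coupling for $\tau$ at $|\beta|<\beta_{\mathrm c}$ gives FK clusters $\mathcal C_1,\dots,\mathcal C_m$ with i.i.d.\ uniform signs $\xi_i$. This yields
\[
S_\Lambda=\sum_{i}\xi_i\,a_i,\qquad a_i:=\sum_{x\in\mathcal C_i}\varepsilon_x .
\]
Conditionally on the clusters, this is a Rademacher sum with weights $|a_i|\le|\mathcal C_i|$. The finite-graph normal approximation bound will be applied to it, either in the form already proved for general weights or by re-deriving it. That bound requires three ingredients: the third moment $\mathbb E\sum_i|a_i|^3$, which is $O(|\Lambda|)$ by the subcritical cluster-size moment estimates because $|a_i|\le|\mathcal C_i|$; a bound on the fluctuation of the conditional variance $V=\sum_i a_i^2$, which the fourth-cumulant identity controls since $\operatorname{Var}(V)$ is dominated by the same cluster-size moments and is $O(|\Lambda|)$; and a lower bound $s_\Lambda^2=\mathbb E V\ge c|\Lambda|$.

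The lower bound on the variance is the main obstacle. Cancellations inside clusters can make $a_i=0$, for example on a cluster consisting of a single edge, and since $\beta<0$ the Griffiths argument is no longer available. I would use isolated vertices. For each $x\in\Lambda$, the event that every edge of $\Lambda$ at $x$ is closed has FK probability at least $(1-p)^{2d}>0$, where $p=1-e^{-2|\beta|}$; this follows from finite energy, with a constant uniform in $\Lambda$. On that event $x$ is a singleton cluster with $a_i^2=1$. Therefore
\[
\mathbb E V\ge\sum_{x\in\Lambda}\mathbb P(x\text{ isolated})\ge(1-p)^{2d}|\Lambda|,
\]
which also gives $s_\Lambda^2>0$. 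Combining the three estimates in the Kolmogorov and Wasserstein bounds of \cref{sec:proof-main-results} should give rate $|\Lambda|^{-1/2}$ with constants depending only on $|\beta|$ and $d$. The same isolated-vertex argument also gives an alternative lower bound when $\beta\ge0$.
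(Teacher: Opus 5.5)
Your proof is correct, and its skeleton coincides with the paper's. In \cref{prop:nearest-neighbor-local-approximation} (applied with $D=\Lambda$) the paper uses the same checkerboard gauge, the signed weights $a_x=\eta_x\mathbf 1_{\{x\in\Lambda\}}$, the cluster moments of \cref{prop:cluster-moments} at $|\beta|<\beta_{\mathrm c}$, and the general-weight \cref{cor:cluster-moment-normal-approximation}. That corollary is the right tool precisely because $V_a$ has no deterministic lower bound once weights can cancel.

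The genuinely different ingredient is the variance lower bound.

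\emph{The paper's route.} \cref{lem:nearest-neighbor-variance-lower} argues at the spin level. It conditions on all spins outside a parity class $I$ with $|I|\geq|\Lambda|/2$. Since no edge joins two sites of $I$, the spins in $I$ are conditionally independent with conditional variance $\operatorname{sech}^2(h_x)\geq\operatorname{sech}^2(2d|\beta|)$. This gives $s_\Lambda^2\geq\frac{|\Lambda|}{2}\operatorname{sech}^2(2d|\beta|)$. The argument needs no FK input, but it uses bipartiteness a second time.

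\emph{Your route.} You stay inside the Edwards--Sokal picture. Given all other edges, an FK edge is closed with probability at least $1-p$; the exact value is $1-p$ or $2(1-p)/(2-p)$. Hence each $x\in\Lambda$ is isolated with probability at least $(1-p)^{2d}=e^{-4d|\beta|}$, which gives $s_\Lambda^2=\mathbb EV_a\geq e^{-4d|\beta|}|\Lambda|$. The constant is of the same order as the paper's.

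\emph{Comparison.} Your version is the more robust one. It uses only that the total coupling at each vertex is bounded. It therefore yields $s_a^2\geq e^{-2\beta\|J\|_1}\sum_x a_x^2$ for arbitrary weights and any summable ferromagnetic $J$, with no independent-set structure needed. Both bounds are uniform in an ambient set $D\supseteq\Lambda$, so either one also serves \cref{thm:classical-ising-infinite-wasserstein}.
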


\begin{remark}
  For $0\leq\beta<\beta_{\mathrm c}$, both bounds in
\cref{thm:classical-ising} follow directly from
\cref{thm:free-long-range-new} with $J=J_{\mathrm{nn}}$.
\end{remark}

We next consider the zero-field nearest-neighbor model in
infinite volume $\mu^\infty_\beta$.

\begin{theorem}
\label{thm:classical-ising-infinite-wasserstein}
Fix $-\beta_{\mathrm c}<\beta<\beta_{\mathrm c}$.
There exists $C_{\beta,d}^{\infty}<\infty$ such that, for every
nonempty finite set $\Lambda\subset\mathbb Z^d$,
we have $s_\Lambda^2>0$ and
\begin{align}
d_{\mathrm K}(W_\Lambda,G)
&\leq C_{\beta,d}^{\infty}|\Lambda|^{-1/2},
\label{eq:classical-infinite-K-bound}\\
d_{\mathrm W}(W_\Lambda,G)
&\leq C_{\beta,d}^{\infty}|\Lambda|^{-1/2}.
\label{eq:classical-infinite-W-bound}
\end{align}
Both bounds hold under $\mu^\infty_\beta$.
The constant depends only on $\beta$ and $d$.
\end{theorem}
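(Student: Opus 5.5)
We prove \cref{thm:classical-ising-infinite-wasserstein} by splitting into the ferromagnetic range $0\leq\beta<\beta_{\mathrm c}$ and the antiferromagnetic range $-\beta_{\mathrm c}<\beta<0$, and reducing the second case to the first by a gauge transformation.

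\textbf{Ferromagnetic case.} For $0\leq\beta<\beta_{\mathrm c}$ both bounds are special cases of \cref{thm:infinite-long-range-new} with $J=J_{\mathrm{nn}}$. This interaction clearly satisfies \eqref{eq:intro-summable-J}. Positivity of $s_\Lambda^2$ follows from Griffiths' inequality, which gives $\mathbb E[\sigma_x\sigma_y]\geq0$, so that $s_\Lambda^2\geq|\Lambda|$.

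\textbf{Antiferromagnetic case: gauge transformation.} For $\beta<0$ we use the bipartite structure of $\mathbb Z^d$. Set $\epsilon_x:=(-1)^{x_1+\cdots+x_d}$ and $\tau_x:=\epsilon_x\sigma_x$. For nearest neighbours $x,y$ we have $\epsilon_x\epsilon_y=-1$, so $\sigma_x\sigma_y=-\tau_x\tau_y$. Hence on every box $B_R$ the image of $\mu^{\mathrm f}_{B_R,\beta}$ under $\sigma\mapsto\tau$ is $\mu^{\mathrm f}_{B_R,|\beta|}$. Passing to the limit in \eqref{eq:intro-infinite-volume}, the image of $\mu^\infty_\beta$ is $\mu^\infty_{|\beta|}$, and the limit exists because it exists for the ferromagnet. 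This identifies
\[
S_\Lambda=\sum_{x\in\Lambda}\epsilon_x\tau_x ,
\]
a signed sum of ferromagnetic spins at inverse temperature $|\beta|<\beta_{\mathrm c}$. Thus the antiferromagnetic statement becomes a normal approximation for the \emph{staggered} sum $\sum_x\epsilon_x\tau_x$ under $\mu^\infty_{|\beta|}$.

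\textbf{Staggered sum via Edwards--Sokal.} We rerun the Edwards--Sokal argument behind \cref{thm:infinite-long-range-new} (the finite-graph bounds of \cref{sec:proof-main-results}), with the cluster size $|C|$ replaced by the signed cluster weight $a_C:=\sum_{x\in C\cap\Lambda}\epsilon_x$. Conditionally on the FK configuration, $S_\Lambda=\sum_C \xi_C a_C$ with i.i.d.\ fair signs $\xi_C$. The upper moment estimates carry over unchanged, since $|a_C|\leq|C\cap\Lambda|$: the subcritical cluster-size moments and the fourth-cumulant control of the conditional variance $\sum_C a_C^2$ only use such upper bounds.

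\textbf{Main obstacle: variance lower bound.} The remaining input is
\[
s_\Lambda^2=\mathbb E\sum_C a_C^2\geq c_{\beta,d}|\Lambda|,
\]
which also yields $s_\Lambda^2>0$. The difficulty is that the signed weights can cancel, so the ferromagnetic argument does not apply directly. We handle this with a local argument. For each $x\in\Lambda$, consider the event that all edges incident to $x$ are closed, so that $\{x\}$ is a singleton cluster with $a_C^2=1$. In infinite volume, finite energy (equivalently, the DLR conditional law of $\sigma_x$ given all other spins) gives this event probability at least $(1-p)^{2d}$, where $p=1-e^{-2|\beta|}$. Summing over $x\in\Lambda$ gives
\[
\mathbb E\sum_C a_C^2\geq (1-p)^{2d}|\Lambda|.
\]
Alternatively, one can argue directly on spins: conditionally on all other spins, $\operatorname{Var}(\sigma_x\mid\cdot)\geq c(\beta,d)>0$, and the law of total variance applied to a sparse sublattice of $\Lambda$ gives the same linear lower bound.

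With this variance bound, the Kolmogorov and $1$-Wasserstein estimates follow exactly as in the ferromagnetic proof. The constants depend only on $|\beta|$ and $d$.
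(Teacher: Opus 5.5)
Your proposal is correct and shares the paper's architecture. You gauge the antiferromagnet to the ferromagnet at $|\beta|$, feed the signed weights $a_x=\epsilon_x\mathbf 1_{\{x\in\Lambda\}}$ into \cref{cor:cluster-moment-normal-approximation} on the boxes $D_m$ (cluster moments from \cref{prop:cluster-moments}, since $|\beta|<\beta_{\mathrm c}$), and pass to $\mu^\infty_\beta$ with \cref{lem:fixed-set-limit}. The paper runs both signs through \cref{prop:nearest-neighbor-local-approximation} rather than splitting off $\beta\geq0$, which is only cosmetic.

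The genuine difference is the linear variance lower bound. The paper's \cref{lem:nearest-neighbor-variance-lower} is your ``alternative''. It conditions on the spins off a parity class $I$ with $|I|\geq|\Lambda|/2$; the spins in $I$ are then conditionally independent with variances $\operatorname{sech}^2(h_x)\geq\operatorname{sech}^2(2d|\beta|)$. This gives $s_\Lambda^2\geq\frac{|\Lambda|}{2}\operatorname{sech}^2(2d|\beta|)$ directly under $\mu^{\mathrm f}_{D,\beta}$, for either sign and with no random-cluster input.

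Your primary route stays inside the Edwards--Sokal picture. You use $V_a\geq\#\{x\in\Lambda:C(x)=\{x\}\}$. By \cref{prop:ES-identities}(iii), given the spins the at most $2d$ edges at $x$ are independent and each is open with probability at most $p$. Hence $\mathbb P(C(x)=\{x\})\geq(1-p)^{2d}=e^{-4d|\beta|}$ and $s_a^2=\mathbb E V_a\geq e^{-4d|\beta|}|\Lambda|$, a constant comparable to the paper's. This needs the gauge first, but it uses only a degree bound rather than an independent set of positive density. It therefore extends verbatim to any summable ferromagnetic $J$ and any weights with $\min_{x\in H}|a_x|>0$, with $e^{-2\beta\|J\|_1}$ in place of $e^{-4d|\beta|}$. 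The parity argument has no analogue there.

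Three points of precision follow.

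First, the variance bound is needed under every prelimit measure $\mu^{\mathrm f}_{D_m,\beta}$, uniformly in $m$. That is where \cref{cor:cluster-moment-normal-approximation} is applied and what \cref{lem:fixed-set-limit} assumes. Your argument gives this directly, since degrees in $D_m$ are at most $2d$, so you need not invoke finite energy in infinite volume.

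Second, the fourth-cumulant control does not follow from $|a_C|\leq|C\cap\Lambda|$ alone. For signed weights $\kappa_4(T_a)$ has no sign. \cref{lem:conditional-variance-bound} bounds it by $A^4|\kappa_4(T_c)|$ with $c_x=\mathbf 1_{\{x\in H\}}$, using the pointwise sign $U_4\leq0$. Alternatively, FKG makes every $\operatorname{Cov}(\mathbf 1_{\{x\leftrightarrow y\}},\mathbf 1_{\{z\leftrightarrow w\}})$ nonnegative, whence $\operatorname{Var}(V_a)\leq A^4\operatorname{Var}(R_2(H))$.

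Third, $V_a$ has no pointwise lower bound for signed weights. The corollary handles this via Chebyshev on $\{V_a<s_a^2/2\}$.

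Since \cref{cor:cluster-moment-normal-approximation} already packages the second and third points, your argument is complete once you cite it.
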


\begin{remark}
  For $0\leq\beta<\beta_{\mathrm c}$, both bounds in
\cref{thm:classical-ising-infinite-wasserstein}
follow directly from \cref{thm:infinite-long-range-new}
with $J=J_{\mathrm{nn}}$.
\end{remark}

\begin{remark}
\label{rem:classical-infinite-wasserstein-comparison}
For the zero-field nearest-neighbor ferromagnetic Ising model,
\citet[Theorem~2.1 and Corollary~2.7]{GoldsteinWiroonsri2018}
obtained a bound of order $|Q_n|^{-1/(2d+2)}$ in
$d_{\mathrm W}$ under $\mu^\infty_\beta$, for every fixed
$0<\beta<\beta_{\mathrm c}$.
For $\Lambda=Q_n$, the bound in
\eqref{eq:classical-infinite-W-bound} improves this rate to
$|Q_n|^{-1/2}$.
\end{remark}

\section{Finite-graph normal approximation for Ising linear statistics}
\label{sec:proof-main-results}

In this section, we use the Edwards--Sokal coupling to prove
normal approximation bounds for zero-field ferromagnetic Ising
models on finite graphs.
The coupling reduces the problem to normal approximation for
sums of conditionally independent random variables and to moment
estimates for random clusters.

\subsection{Finite-graph Ising model and Edwards--Sokal coupling}
\label{subsec:finite-graph-setup}

Let $\mathsf G=(\mathsf V,\mathsf E)$ be a finite simple graph,
where $\mathsf V$ is the vertex set and $\mathsf E$ is the edge set.
We assume that $\mathsf V$ is nonempty and $|\mathsf V|$ is the number of vertices.
For each edge $e=\{x,y\}\in\mathsf E$, let
$K_e=K_{xy}=K_{yx}\in[0,\infty)$.
The zero-field Ising measure is
\begin{equation}
\label{eq:finite-ising-new}
\mu_{\mathsf V,K}(\sigma)
=\frac{1}{Z^{\mathrm{Is}}_{\mathsf V,K}}
\exp\!\left\{ 
\sum_{\{x,y\}\in\mathsf E}K_{xy}\sigma_x\sigma_y
\right\},
\qquad \sigma\in\{-1,+1\}^{\mathsf V}.
\end{equation}
Here $Z^{\mathrm{Is}}_{\mathsf V,K}$ is the normalizing constant.
The inverse temperature is included in $K_e$.
For the free-boundary ferromagnet in
\eqref{eq:intro-long-range-measure}, $K_{xy}=\beta J(x-y)$.
For the periodized model in \eqref{eq:periodic-measure-new},
$K_{\bar x\bar y}=\beta J_L(\bar x-\bar y)$.

For each edge, set
\[
p_e:=1-e^{-2K_e}\in[0,1).
\]
An edge configuration is a vector
$\omega=(\omega_e)_{e\in\mathsf E}\in\{0,1\}^{\mathsf E}$.
An edge is open when $\omega_e=1$ and closed when $\omega_e=0$.
The connected components of the graph formed by the open edges
are called open clusters.
Let $\mathcal C(\omega)$ be their collection and
$k(\omega):=|\mathcal C(\omega)|$ be their number.
Isolated vertices are counted as singleton clusters.
Write $C(x)$ for the cluster containing $x$, and write
$x\leftrightarrow y$ when $x$ and $y$ belong to the same cluster.

The associated $q=2$ random-cluster measure, also called the
FK measure, is
\begin{equation}
\label{eq:finite-random-cluster-measure}
\phi_{\mathsf V,K,2}(\omega)
=\frac{1}{Z^{\mathrm{FK}}_{\mathsf V,K,2}}
2^{k(\omega)}
\prod_{e\in\mathsf E}
p_e^{\omega_e}(1-p_e)^{1-\omega_e}.
\end{equation}
Here $Z^{\mathrm{FK}}_{\mathsf V,K,2}$ is the normalizing constant;
see \citet{Grimmett2006}.
We use $0^0=1$ in this product.
Thus an edge with $p_e=0$ is closed almost surely.

The Edwards--Sokal coupling \citep{EdwardsSokal1988} is the
following joint measure on spins and edges:
\begin{equation}
\label{eq:finite-ES-joint-law}
\mathsf P^{\mathrm{ES}}_{\mathsf V,K}(\sigma,\omega)
=\frac{1}{Z^{\mathrm{ES}}_{\mathsf V,K}}
\prod_{e=\{x,y\}\in\mathsf E}
\bigl[(1-p_e)\delta(\omega_e,0)
+p_e\delta(\omega_e,1)\delta(\sigma_x,\sigma_y)\bigr].
\end{equation}
Here $\delta(u,v)=1$ if $u=v$ and $0$ otherwise, and
$Z^{\mathrm{ES}}_{\mathsf V,K}$ is the normalizing constant.
This is the form reviewed in
\citet[Section~2.2]{DingLiuXia2024}, with a separate parameter
$p_e$ for each edge.
A closed edge contributes $1-p_e$ to the product.
An open edge contributes $p_e$ if its endpoint spins agree
and zero otherwise.

For the spin and edge variables, $\mathbb P$, $\mathbb E$ and
$\operatorname{Var}$ denote probability, expectation and variance
under $\mathsf P^{\mathrm{ES}}_{\mathsf V,K}$ throughout this section.

\begin{proposition}[Edwards--Sokal coupling]
\label{prop:ES-identities}
The joint measure in \eqref{eq:finite-ES-joint-law} has the
following properties.
\begin{enumerate}
\item[(i)] The spin configuration has distribution
$\mu_{\mathsf V,K}$.
\item[(ii)] The edge configuration has distribution
$\phi_{\mathsf V,K,2}$.
\item[(iii)] Conditional on the spin configuration $\sigma$,
the edges are independent. An edge $e=\{x,y\}$ is open with
probability $p_e\mathbf1_{\{\sigma_x=\sigma_y\}}$.
\item[(iv)] Conditional on any edge configuration $\omega$
with $\phi_{\mathsf V,K,2}(\omega)>0$, the spins are constant
on each open cluster. Their common values
$(\varepsilon_C)_{C\in\mathcal C(\omega)}$ are independent,
and each equals $-1$ or $+1$ with probability $1/2$.
\item[(v)] For every $x,y\in\mathsf V$,
\begin{equation}
\label{eq:two-point-connectivity}
\mathbb E[\sigma_x\sigma_y]=\mathbb P(x\leftrightarrow y).
\end{equation}
\end{enumerate}
\end{proposition}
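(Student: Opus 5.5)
The plan is to compute marginals and conditionals directly from the product form in \eqref{eq:finite-ES-joint-law}, one edge factor at a time.

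\emph{Spin marginal, (i).} Fix $\sigma$ and sum over $\omega$. The product factorizes over edges. Summing $\omega_e\in\{0,1\}$, the factor for $e=\{x,y\}$ becomes
\[
(1-p_e)+p_e\delta(\sigma_x,\sigma_y)=e^{-2K_e}+(1-e^{-2K_e})\mathbf1_{\{\sigma_x=\sigma_y\}}.
\]
This equals $1$ if $\sigma_x=\sigma_y$ and $e^{-2K_e}$ otherwise. In both cases it equals $e^{-K_e}e^{K_e\sigma_x\sigma_y}$. The marginal of $\sigma$ is therefore proportional to $\exp\{\sum_e K_e\sigma_x\sigma_y\}$, which is $\mu_{\mathsf V,K}$.

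\emph{Conditional law of edges, (iii).} For fixed $\sigma$ the joint weight is a product of edge factors, so the edges are conditionally independent. The conditional probability that $e$ is open is the ratio
\[
\frac{p_e\delta(\sigma_x,\sigma_y)}{(1-p_e)+p_e\delta(\sigma_x,\sigma_y)}.
\]
This is $p_e$ when the endpoint spins agree and $0$ otherwise. So it matches the probability $p_e\mathbf1_{\{\sigma_x=\sigma_y\}}$ stated in (iii).

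\emph{Edge marginal and conditional law of spins, (ii) and (iv).} Fix $\omega$. The joint weight equals
\[
\prod_e p_e^{\omega_e}(1-p_e)^{1-\omega_e}\;\times\;\mathbf1\{\sigma_x=\sigma_y\text{ for every open }e\}.
\]
The indicator is $1$ exactly when $\sigma$ is constant on every open cluster. There are $2^{k(\omega)}$ such $\sigma$, one for each choice of signs $(\varepsilon_C)_{C\in\mathcal C(\omega)}$.
\begin{itemize}
\item Summing over $\sigma$ gives the marginal weight $2^{k(\omega)}\prod_e p_e^{\omega_e}(1-p_e)^{1-\omega_e}$, which is $\phi_{\mathsf V,K,2}$. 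This proves (ii).
\item Conditionally on $\omega$ with positive probability, the law of $\sigma$ is uniform on these $2^{k(\omega)}$ configurations. Hence the cluster signs are i.i.d.\ uniform on $\{\pm1\}$. This proves (iv).
\end{itemize}
The convention $0^0=1$ handles edges with $p_e=0$: such edges are closed almost surely under both measures.

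\emph{Two-point identity, (v).} Condition on $\omega$ and use (iv).
\begin{itemize}
\item If $x\leftrightarrow y$, the two spins share the same cluster sign, so $\sigma_x\sigma_y=\varepsilon_C^2=1$.
\item Otherwise $\sigma_x\sigma_y$ is a product of two independent symmetric signs, so its conditional mean is $0$.
\end{itemize}
Thus $\mathbb E[\sigma_x\sigma_y\mid\omega]=\mathbf1_{\{x\leftrightarrow y\}}$. Taking expectations gives $\mathbb E[\sigma_x\sigma_y]=\mathbb P(x\leftrightarrow y)$.

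\emph{Main obstacle.} There is no real obstacle. The only care needed is bookkeeping: checking that the constant $\prod_e e^{-K_e}$ in (i) and the factor $2^{k(\omega)}$ in (ii) are absorbed by the normalizing constants. Since $Z^{\mathrm{ES}}_{\mathsf V,K}$ normalizes the joint law, both marginals are automatically normalized once their shapes are identified.
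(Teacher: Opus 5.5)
Your proposal is correct and follows the paper's proof essentially step for step. It uses the same edgewise summation for (i) and (iii), the same count of $2^{k(\omega)}$ compatible spin configurations for (ii) and (iv), and the same conditioning on $\omega$ for (v), with only a cosmetic difference: the paper identifies the normalizing constants explicitly ($Z^{\mathrm{ES}}_{\mathsf V,K}=e^{-\sum_e K_e}Z^{\mathrm{Is}}_{\mathsf V,K}=Z^{\mathrm{FK}}_{\mathsf V,K,2}$) and notes $1-p_e>0$ so the conditional edge law in (iii) is well defined, whereas you let normalization take care of itself.
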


\begin{proof}[Proof of \cref{prop:ES-identities}]
\emph{(i)} Fix a spin configuration $\sigma$.
For an edge $e=\{x,y\}$, summing its factor in
\eqref{eq:finite-ES-joint-law} over $\omega_e=0,1$ gives
\[
\begin{aligned}
(1-p_e)+p_e\delta(\sigma_x,\sigma_y)
&=\begin{cases}
1,&\sigma_x=\sigma_y,\\
e^{-2K_e},&\sigma_x\ne\sigma_y
\end{cases}\\
&=e^{-K_e}e^{K_e\sigma_x\sigma_y}.
\end{aligned}
\]
Each edge variable appears in only one factor.
Summing over all edge configurations therefore gives
\[
\sum_{\omega\in\{0,1\}^{\mathsf E}}
\mathsf P^{\mathrm{ES}}_{\mathsf V,K}(\sigma,\omega)
=\frac{e^{-\sum_{e\in\mathsf E}K_e}}
{Z^{\mathrm{ES}}_{\mathsf V,K}}
\exp\!\left\{
\sum_{\{x,y\}\in\mathsf E}K_{xy}\sigma_x\sigma_y
\right\}.
\]
The sum of these probabilities over all $\sigma$ is one.
By the definition of $Z^{\mathrm{Is}}_{\mathsf V,K}$, this implies
\[
Z^{\mathrm{ES}}_{\mathsf V,K}
=e^{-\sum_{e\in\mathsf E}K_e}Z^{\mathrm{Is}}_{\mathsf V,K}.
\]
Substituting this identity into the preceding expression gives
the spin marginal $\mu_{\mathsf V,K}$, proving (i).

\emph{(ii)} Fix an edge configuration $\omega$.
Call a spin configuration compatible with $\omega$ if it is
constant on each open cluster.
Any other spin configuration has joint probability zero with
$\omega$, since some open edge has unequal endpoint spins.
Each cluster has two possible spin values, so there are
$2^{k(\omega)}$ compatible configurations.

For every compatible configuration, an open edge contributes
$p_e$ and a closed edge contributes $1-p_e$.
These factors do not depend on the signs chosen for the clusters.
Thus every compatible configuration has the same joint
probability with $\omega$, namely
\[
b(\omega):=
\frac{1}{Z^{\mathrm{ES}}_{\mathsf V,K}}
\prod_{e\in\mathsf E}
p_e^{\omega_e}(1-p_e)^{1-\omega_e}.
\]
Summing over the spin configurations gives
\[
\sum_{\sigma\in\{-1,+1\}^{\mathsf V}}
\mathsf P^{\mathrm{ES}}_{\mathsf V,K}(\sigma,\omega)
=2^{k(\omega)}b(\omega)
=\frac{2^{k(\omega)}}{Z^{\mathrm{ES}}_{\mathsf V,K}}
\prod_{e\in\mathsf E}
p_e^{\omega_e}(1-p_e)^{1-\omega_e}.
\]
The numerator is the FK weight.
Since the probabilities sum to one over $\omega$,
$Z^{\mathrm{ES}}_{\mathsf V,K}=Z^{\mathrm{FK}}_{\mathsf V,K,2}$.
Hence the edge marginal is $\phi_{\mathsf V,K,2}$, proving (ii).

\emph{(iii)} Fix $\sigma$ again.
For each edge $e=\{x,y\}$, write its closed and open weights as
\[
w_e(0):=1-p_e,
\qquad
w_e(1):=p_e\delta(\sigma_x,\sigma_y).
\]
Since $p_e<1$, we have
$w_e(0)+w_e(1)\geq1-p_e>0$.
For an edge configuration $t\in\{0,1\}^{\mathsf E}$, the joint
probability of $(\sigma,t)$ is proportional to
$\prod_e w_e(t_e)$.
Expanding the product on the right gives the identity
\[
\sum_{\eta\in\{0,1\}^{\mathsf E}}
\prod_{e\in\mathsf E}w_e(\eta_e)
=\prod_{e\in\mathsf E}\bigl(w_e(0)+w_e(1)\bigr).
\]
Dividing the weight of $t$ by the sum of the weights of all
edge configurations, we get
\[
\begin{aligned}
\mathbb P(\omega=t\mid\sigma)
&=\frac{\prod_{e\in\mathsf E}w_e(t_e)}
{\sum_{\eta\in\{0,1\}^{\mathsf E}}
 \prod_{e\in\mathsf E}w_e(\eta_e)}\\
&=\prod_{e\in\mathsf E}
\frac{w_e(t_e)}{w_e(0)+w_e(1)}.
\end{aligned}
\]
For each edge, the last factor sums to one over $t_e=0,1$.
The conditional distribution is therefore a product of
probability distributions on the individual edges.
This proves that the edges are independent given $\sigma$.
Their open probabilities are
\[
\mathbb P(\omega_e=1\mid\sigma)
=\frac{w_e(1)}{w_e(0)+w_e(1)}
=\begin{cases}
p_e,&\sigma_x=\sigma_y,\\
0,&\sigma_x\ne\sigma_y.
\end{cases}
\]
This is $p_e\mathbf1_{\{\sigma_x=\sigma_y\}}$, proving (iii).

\emph{(iv)} Fix $\omega$ with
$\phi_{\mathsf V,K,2}(\omega)>0$.
By the calculation in (ii), each compatible spin configuration
has joint probability $b(\omega)$ with $\omega$, and
$\phi_{\mathsf V,K,2}(\omega)=2^{k(\omega)}b(\omega)>0$.
Thus, for any compatible spin configuration $\tau$,
\[
\mathbb P(\sigma=\tau\mid\omega)
=\frac{\mathsf P^{\mathrm{ES}}_{\mathsf V,K}(\tau,\omega)}
{\phi_{\mathsf V,K,2}(\omega)}
=\frac{b(\omega)}{2^{k(\omega)}b(\omega)}
=2^{-k(\omega)}.
\]
Incompatible configurations have conditional probability zero.
Write the open clusters as $C_1,\ldots,C_k$, where $k=k(\omega)$.
For every choice $t_1,\ldots,t_k\in\{-1,+1\}$,
\[
\mathbb P\bigl(
\varepsilon_{C_1}=t_1,\ldots,\varepsilon_{C_k}=t_k
\mid\omega\bigr)
=2^{-k}=\prod_{i=1}^k\frac12.
\]
The cluster signs are therefore independent given $\omega$,
and each sign takes the two values with probability $1/2$.
This proves (iv).

\emph{(v)} Fix $x,y\in\mathsf V$ and condition on $\omega$.
If $x\leftrightarrow y$, the two spins are equal, so
$\sigma_x\sigma_y=1$.
Otherwise, they belong to different clusters.
By (iv), the two spins are conditionally independent and have
conditional mean zero, so
$\mathbb E(\sigma_x\sigma_y\mid\omega)=0$.
Consequently,
\[
\mathbb E(\sigma_x\sigma_y\mid\omega)
=\mathbf1_{\{x\leftrightarrow y\}}.
\]
Taking expectations gives
\[
\mathbb E[\sigma_x\sigma_y]
=\mathbb E\bigl[\mathbf1_{\{x\leftrightarrow y\}}\bigr]
=\mathbb P(x\leftrightarrow y),
\]
which proves (v).
\end{proof}

By (i) and (ii), expectations of spin functions are also
expectations under $\mu_{\mathsf V,K}$, and expectations of edge
functions are also expectations under $\phi_{\mathsf V,K,2}$.

\subsection{Weighted spin sums and conditional variance}
\label{subsec:weighted-spin-variance}

Let $a=(a_x)_{x\in\mathsf V}\in\mathbb R^{\mathsf V}$
be a fixed vector of weights. Define
\begin{equation}
\label{eq:weighted-statistic-scales}
T_a:=\sum_{x\in\mathsf V}a_x\sigma_x,
\qquad
s_a^2:=\operatorname{Var}(T_a),
\qquad
s_a:=\sqrt{s_a^2},
\qquad
A:=\max_{x\in\mathsf V}|a_x|.
\end{equation}
The variable $T_a$ is the weighted spin sum.
Taking $a_x=1$ at every vertex gives the total spin.
Spin-flip symmetry gives $\mathbb ET_a=0$.
The condition $a\ne0$ means that at least one coefficient $a_x$
is nonzero; other coefficients may be zero.
Under this condition, $s_a^2>0$.
Indeed, every spin configuration has positive probability,
and changing a spin with a nonzero coefficient changes $T_a$.

For each open cluster $C$, set
\[
a(C):=\sum_{x\in C}a_x,
\qquad
V_a:=\sum_{C\in\mathcal C(\omega)}a(C)^2.
\]
The cluster signs in \cref{prop:ES-identities} give, almost surely,
\begin{equation}
\label{eq:weighted-ES}
T_a=\sum_{C\in\mathcal C(\omega)}a(C)\varepsilon_C.
\end{equation}
Conditional on $\omega$, these summands are independent and
have mean zero. Hence
\begin{equation}
\label{eq:ES-conditional-moments}
\mathbb E(T_a\mid\omega)=0,
\qquad
\operatorname{Var}(T_a\mid\omega)=V_a,
\qquad
s_a^2=\mathbb EV_a.
\end{equation}
Thus $V_a$ is the conditional variance, and $s_a^2$ is its
expectation. Opposite weights in a cluster can cancel, so
$V_a$ may be zero even though $s_a^2>0$.

For every subset $H\subseteq\mathsf V$ and every $\ell\geq1$,
define
\[
R_\ell(H):=\sum_{C\in\mathcal C(\omega)}|C\cap H|^\ell,
\qquad
R_\ell:=R_\ell(\mathsf V).
\]
The clusters $C$ are those of the entire graph $\mathsf G$.
For the weighted sum $T_a$, take
\[
H:=\{x\in\mathsf V:a_x\ne0\}.
\]
Then $C\cap H$ consists of the vertices in $C$ with nonzero
weights.
Since $|a(C)|\leq A|C\cap H|$, we have
\begin{equation}
\label{eq:cluster-weight-bound}
\sum_{C\in\mathcal C(\omega)}|a(C)|^\ell
\leq A^\ell R_\ell(H)
\leq A^\ell R_\ell,
\qquad \ell\geq1.
\end{equation}
Counting each vertex of $H$ in its cluster gives
\begin{equation}
\label{eq:localized-cluster-sum-identity}
R_\ell(H)=\sum_{x\in H}|C(x)\cap H|^{\ell-1}.
\end{equation}
In particular,
\begin{equation}
\label{eq:cluster-sum-identity}
R_\ell=\sum_{x\in\mathsf V}|C(x)|^{\ell-1}.
\end{equation}
For the all-ones vector $\mathbf1=(1)_{x\in\mathsf V}$,
$a(C)=|C|$. Therefore
\begin{equation}
\label{eq:FG-basic}
V_{\mathbf1}=R_2\geq |\mathsf V|,
\qquad
s_{\mathbf1}^2=\mathbb ER_2.
\end{equation}
The inequality follows from
$\sum_C|C|^2\geq\sum_C|C|=|\mathsf V|$.

To prove the normal approximation bounds below, we use the
following lemma to control the fluctuations of $V_a$ around
its mean $s_a^2=\mathbb E V_a$.

\begin{lemma}[Fluctuation of the conditional variance]
\label{lem:conditional-variance-bound}
For every subset $H\subseteq\mathsf V$,
\begin{equation}
\label{eq:localized-magnetization-variance-fluctuation}
\operatorname{Var}(R_2(H))
\leq \frac23\mathbb ER_4(H).
\end{equation}
In particular,
\begin{equation}
\label{eq:magnetization-variance-fluctuation}
\operatorname{Var}(R_2)
\leq \frac23\mathbb ER_4.
\end{equation}
For every $a\in\mathbb R^{\mathsf V}$, with
$H=\{x\in\mathsf V:a_x\ne0\}$,
\begin{equation}
\label{eq:conditional-variance-bound}
\operatorname{Var}(V_a)
\leq \frac43 A^4\mathbb ER_4(H)
\leq \frac43 A^4\mathbb ER_4.
\end{equation}
\end{lemma}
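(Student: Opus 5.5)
The plan is to compare two expressions for the fourth moment of a spin sum. One comes from the Edwards--Sokal representation. The other comes from a correlation inequality for the zero-field ferromagnet.

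\emph{Step 1: a pointwise identity.} Fix $x,y,z,w\in\mathsf V$. By \cref{prop:ES-identities}(iv), $\mathbb E(\sigma_x\sigma_y\sigma_z\sigma_w\mid\omega)$ equals $1$ if every open cluster contains an even number of the four points (counted with multiplicity), and $0$ otherwise. Write $E_{xy|zw}=\{x\leftrightarrow y\}\cap\{z\leftrightarrow w\}$, and similarly for the other two pairings. Let $F=\{x,y,z,w \text{ all connected}\}$. The "even" event is $E_{xy|zw}\cup E_{xz|yw}\cup E_{xw|yz}$. Any two of these three events intersect exactly in $F$, so inclusion--exclusion gives
\[
\mathbb E[\sigma_x\sigma_y\sigma_z\sigma_w]
=\mathbb P(E_{xy|zw})+\mathbb P(E_{xz|yw})+\mathbb P(E_{xw|yz})-2\mathbb P(F).
\]
Combining this with \eqref{eq:two-point-connectivity}, the fourth Ursell function $u_4(x,y,z,w)=\mathbb E[\sigma_x\sigma_y\sigma_z\sigma_w]-\sum_{\text{pairings}}\mathbb E[\sigma\sigma]\mathbb E[\sigma\sigma]$ satisfies
\[
u_4+2\mathbb P(F)=\sum_{\text{3 pairings}}\operatorname{Cov}\bigl(\mathbf 1_{\{x\leftrightarrow y\}},\mathbf 1_{\{z\leftrightarrow w\}}\bigr).
\]

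\emph{Step 2: two-sided bounds.} The right-hand side is $\ge0$ by the FKG inequality for the $q=2$ random-cluster measure, since connection events are increasing. The Lebowitz inequality for zero-field ferromagnets with $K_e\ge0$ gives $u_4\le0$. Together these yield the pointwise bound
\[
0\le u_4+2\mathbb P(F)\le 2\mathbb P(F).
\]
This is the main input, and I expect it to be the main obstacle. Both inequalities must be justified for general nonnegative couplings on a finite graph. I would cite Lebowitz (or prove it via duplicated spins), and cite the FKG property of $\phi_{\mathsf V,K,2}$ for $q\ge1$ from \citet{Grimmett2006}.

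\emph{Step 3: expand the variance.} Since $V_a=\sum_{x,y}a_xa_y\mathbf 1_{\{x\leftrightarrow y\}}$, we have
\[
\operatorname{Var}(V_a)=\sum_{x,y,z,w}a_xa_ya_za_w\operatorname{Cov}\bigl(\mathbf 1_{\{x\leftrightarrow y\}},\mathbf 1_{\{z\leftrightarrow w\}}\bigr).
\]
The coefficient $a_xa_ya_za_w$ is symmetric under permuting the four indices. So we may replace each covariance by one third of the sum over the three pairings, giving
\[
\operatorname{Var}(V_a)=\tfrac13\sum a_xa_ya_za_w\bigl(u_4+2\mathbb P(F)\bigr).
\]
By Step 2, each bracket lies in $[0,2\mathbb P(F)]$, so $|a_xa_ya_za_w|\le A^4$ can be applied termwise. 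Only indices in $H$ contribute. Also $\sum_{x,y,z,w\in H}\mathbf 1_F=\sum_C|C\cap H|^4=R_4(H)$. Hence
\[
\operatorname{Var}(V_a)\le\tfrac23A^4\,\mathbb ER_4(H).
\]
This is stronger than \eqref{eq:conditional-variance-bound} (it has $\tfrac23$ rather than $\tfrac43$), and the second inequality there follows from $R_4(H)\le R_4$.

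\emph{Step 4: specialize.} Taking $a=\mathbf 1_H$ gives $V_a=R_2(H)$ and $A=1$, which is \eqref{eq:localized-magnetization-variance-fluctuation}. Taking $H=\mathsf V$ gives \eqref{eq:magnetization-variance-fluctuation}.

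As a cross-check for nonnegative weights, there is a more direct route. Conditionally on $\omega$, the identity $\mathbb E(T^4\mid\omega)=3R_2(H)^2-2R_4(H)$ holds. Lebowitz gives $\mathbb ET^4\le3(\mathbb ET^2)^2=3(\mathbb ER_2(H))^2$. Rearranging yields $\operatorname{Var}R_2(H)\le\frac23\mathbb ER_4(H)$. This is the fourth-cumulant identity mentioned in the introduction.
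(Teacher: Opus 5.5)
Your proposal is correct. For the signed-weight bound it takes a genuinely different route from the paper, and a sharper one.

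\textbf{The paper's route.} The paper never expands the four-point function in connection events. It combines the conditional fourth-moment identity $\kappa_4(T_a)=3\operatorname{Var}(V_a)-2\mathbb E\sum_C a(C)^4$ with the sign $U_4\le0$. This gives the first two inequalities exactly as in your closing cross-check. For signed $a$ it then bounds the two pieces separately:
\[
|\kappa_4(T_a)|\le A^4\sum_{H^4}|U_4|=-A^4\kappa_4(T_c)\le 2A^4\,\mathbb ER_4(H),
\qquad
2\,\mathbb E\sum_C a(C)^4\le 2A^4\,\mathbb ER_4(H),
\]
with $c=\mathbf 1_H$. Adding these is why the paper ends with $\tfrac43$.

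\textbf{Your route.} Your symmetrized identity $3\operatorname{Var}(V_a)=\sum a_xa_ya_za_w\bigl(u_4+2\mathbb P(F)\bigr)$ is the paper's identity, summed. Your Step 1 refines it pointwise into three covariances of connection events. FKG then gives $u_4+2\mathbb P(F)\ge0$ termwise, and Lebowitz gives the upper bound $2\mathbb P(F)$. With this two-sided bound you control the combination rather than its two pieces, which halves the constant to $\tfrac23$. That strengthens \eqref{eq:conditional-variance-bound}.

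\textbf{Trade-off.} You need two extra inputs: FKG for $\phi_{\mathsf V,K,2}$, which the paper uses elsewhere anyway, and the inclusion--exclusion representation of $\mathbb E[\sigma_x\sigma_y\sigma_z\sigma_w]$. The paper's argument needs only the sign of $U_4$.

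\textbf{A small bookkeeping point.} Lebowitz's inequality is stated for distinct sites, but your Step 3 uses $u_4\le0$ for every quadruple in $H^4$. For coincident indices, check it directly, as the paper does: $u_4(x,x,y,z)=-2\,\mathbb E[\sigma_x\sigma_y]\,\mathbb E[\sigma_x\sigma_z]\le0$. This also follows from your Step 1 identity.
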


\begin{proof}[Proof of \cref{lem:conditional-variance-bound}]
Since $\mathbb ET_a=0$, its fourth cumulant is
\[
\kappa_4(T_a):=
\mathbb E[T_a^4]-3\bigl(\mathbb E[T_a^2]\bigr)^2.
\]
Conditional on $\omega$, the cluster signs in
\eqref{eq:weighted-ES} are independent and symmetric. Hence
\[
\mathbb E(T_a^4\mid\omega)
=3V_a^2-2\sum_{C\in\mathcal C(\omega)}a(C)^4.
\]
Taking expectations and using $\mathbb E[T_a^2]=\mathbb EV_a$
yields
\begin{equation}
\label{eq:weighted-fourth-cumulant-identity}
\kappa_4(T_a)
=3\operatorname{Var}(V_a)
-2\mathbb E\sum_{C\in\mathcal C(\omega)}a(C)^4.
\end{equation}

For $x_1,x_2,x_3,x_4\in\mathsf V$, define the four-point cumulant
\[
\begin{aligned}
U_4(x_1,x_2,x_3,x_4)
:={}&\mathbb E[\sigma_{x_1}\sigma_{x_2}\sigma_{x_3}\sigma_{x_4}]
-\mathbb E[\sigma_{x_1}\sigma_{x_2}]
  \mathbb E[\sigma_{x_3}\sigma_{x_4}]\\
&-\mathbb E[\sigma_{x_1}\sigma_{x_3}]
  \mathbb E[\sigma_{x_2}\sigma_{x_4}]
-\mathbb E[\sigma_{x_1}\sigma_{x_4}]
  \mathbb E[\sigma_{x_2}\sigma_{x_3}].
\end{aligned}
\]
The zero-field four-point inequality of
\citet[p.~91, Remark~(i)]{Lebowitz1974} gives $U_4\leq0$
when the four vertices are distinct.
If a vertex is repeated, symmetry allows us to write the
arguments as $(x,x,y,z)$. Since $\sigma_x^2=1$,
\[
U_4(x,x,y,z)
=-2\mathbb E[\sigma_x\sigma_y]\mathbb E[\sigma_x\sigma_z]
\leq0,
\]
where the inequality follows from \eqref{eq:two-point-connectivity}.
Thus
\begin{equation}
\label{eq:U4-sign}
U_4(x_1,x_2,x_3,x_4)\leq0
\qquad\text{for all }x_1,x_2,x_3,x_4\in\mathsf V.
\end{equation}
Expanding the moments in the definition of $\kappa_4(T_a)$ gives
\begin{equation}
\label{eq:cumulant-expansion}
\kappa_4(T_a)
=\sum_{x_1,x_2,x_3,x_4\in\mathsf V}
a_{x_1}a_{x_2}a_{x_3}a_{x_4}U_4(x_1,x_2,x_3,x_4).
\end{equation}

Fix $H\subseteq\mathsf V$ and set
$c_x=\mathbf1_{\{x\in H\}}$ for $x\in\mathsf V$.
Then $c(C)=|C\cap H|$ and $V_c=R_2(H)$.
Since all $c_x$ are nonnegative, \eqref{eq:U4-sign} and
\eqref{eq:cumulant-expansion} give $\kappa_4(T_c)\leq0$.
Applying \eqref{eq:weighted-fourth-cumulant-identity} to $T_c$,
we obtain
\[
3\operatorname{Var}(R_2(H))
=\kappa_4(T_c)+2\mathbb ER_4(H)
\leq2\mathbb ER_4(H).
\]
This proves \eqref{eq:localized-magnetization-variance-fluctuation}.
Taking $H=\mathsf V$ gives
\eqref{eq:magnetization-variance-fluctuation}.

Now let $a\in\mathbb R^{\mathsf V}$ and take
$H=\{x\in\mathsf V:a_x\ne0\}$, with $c$ as above.
Using \eqref{eq:cumulant-expansion}, $|a_x|\leq A$, and
\eqref{eq:U4-sign}, we get
\begin{align*}
|\kappa_4(T_a)|
&\leq A^4\sum_{x_1,x_2,x_3,x_4\in H}
|U_4(x_1,x_2,x_3,x_4)|\\
&=-A^4\kappa_4(T_c)\\
&=A^4\bigl(2\mathbb ER_4(H)
-3\operatorname{Var}(R_2(H))\bigr)\\
&\leq2A^4\mathbb ER_4(H).
\end{align*}
Also, \eqref{eq:cluster-weight-bound} gives
$\sum_Ca(C)^4\leq A^4R_4(H)$.
Therefore \eqref{eq:weighted-fourth-cumulant-identity} implies
\begin{align*}
3\operatorname{Var}(V_a)
&=\kappa_4(T_a)+2\mathbb E\sum_Ca(C)^4\\
&\leq|\kappa_4(T_a)|+2A^4\mathbb ER_4(H)\\
&\leq4A^4\mathbb ER_4(H).
\end{align*}
Together with $R_4(H)\leq R_4$, this proves
\eqref{eq:conditional-variance-bound}.
\end{proof}

\subsection{Kolmogorov bounds}
\label{subsec:weighted-spin-BE}

We first recall the classical Berry--Esseen inequality
\citep{Berry1941,Esseen1942}; see also
\citet[Section~3.4, (3.27)]{ChenGoldsteinShao2011}.
We use the following form with numerical constant one.

\begin{lemma}
\label{lem:classical-berry-esseen}
Let $\xi_1,\ldots,\xi_n$ be independent real random variables
with $\mathbb E\xi_i=0$ and $\mathbb E|\xi_i|^3<\infty$
for $i=1,\ldots,n$.
If $v:=\sum_{i=1}^n\mathbb E\xi_i^2>0$, then
\begin{equation}
\label{eq:classical-BE-constant-one}
d_{\mathrm K}\!\left(\frac{\sum_{i=1}^n\xi_i}{\sqrt v},G\right)
\leq\frac{1}{v^{3/2}}\sum_{i=1}^n\mathbb E|\xi_i|^3.
\end{equation}
\end{lemma}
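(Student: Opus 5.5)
The plan is to treat \cref{lem:classical-berry-esseen} as a normalization of the classical Berry--Esseen theorem for independent, non-identically distributed summands. The only work is to make sure the numerical constant can be taken equal to one. We do not reprove the theorem from scratch. Instead we reduce to the sharpest published form of it and dispose of a trivial regime.

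\emph{Step 1 (normalization).} Replace $\xi_i$ by $\xi_i/\sqrt v$. Both sides of \eqref{eq:classical-BE-constant-one} are invariant under this rescaling: the left side depends only on the law of $\sum_i\xi_i/\sqrt v$, and the right side is homogeneous of degree zero. We may therefore assume $v=1$. Write $\gamma:=\sum_{i=1}^n\mathbb E|\xi_i|^3$ for the Lyapunov ratio; the claim becomes $d_{\mathrm K}(\sum_i\xi_i,G)\leq\gamma$.

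\emph{Step 2 (trivial regime).} Every Kolmogorov distance between probability distributions is at most $1$. So if $\gamma\geq1$ there is nothing to prove, and we may assume $\gamma<1$. This step is not needed once Step 3 is available, but it makes clear that only a constant $C_0\leq1$ is required.

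\emph{Step 3 (invoke the sharp constant).} For independent mean-zero summands with finite third moments and $\sum_i\mathbb E\xi_i^2=1$, the non-i.i.d.\ Berry--Esseen theorem gives
\[
d_{\mathrm K}\Bigl(\sum_{i=1}^n\xi_i,G\Bigr)\leq C_0\gamma .
\]
This is the inequality of \citet{Berry1941,Esseen1942}. The modern evaluations of the absolute constant (Shevtsova, Tyurin) give $C_0\leq0.5600<1$, and this is the form recorded in \citet[Section~3.4, (3.27)]{ChenGoldsteinShao2011}. Combining with Step 1 yields \eqref{eq:classical-BE-constant-one} with constant one.

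\emph{Main obstacle.} The only delicate point is the numerical constant. A self-contained Stein's-method proof, via the concentration-inequality approach of \citet{ChenGoldsteinShao2011}, gives the right structure $C\gamma$ but with a constant larger than one. If a self-contained argument were insisted on, I would first try the Stein/zero-bias route with careful bookkeeping. Failing that, I would state the lemma with an unspecified absolute constant $C_0$. That constant would then simply propagate into $C_{\beta,J}$ in the main theorems, so nothing downstream depends on the value being exactly one.
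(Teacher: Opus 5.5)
Your proposal is correct and matches the paper: the paper also does not reprove this inequality, but cites the classical Berry--Esseen theorem (Berry, Esseen, and Chen--Goldstein--Shao, Section~3.4, (3.27)) in the form with numerical constant one. Your explicit appeal to the Shevtsova/Tyurin bound $C_0\leq 0.56$ for independent, non-identically distributed summands is what justifies taking the constant equal to one.
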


We apply the classical Berry--Esseen inequality (\cref{lem:classical-berry-esseen}) after conditioning
on $\omega$. The conditional variance estimate then controls
the effect of replacing $V_a$ by $s_a^2$.

\begin{proposition}[Finite-graph Berry--Esseen bounds]
\label{prop:weighted-ising-normal-approximation}
For the total spin
\[
S:=T_{\mathbf1}=\sum_{x\in\mathsf V}\sigma_x,
\qquad s:=s_{\mathbf1},
\]
we have
\begin{equation}
\label{eq:finite-graph-magnetization-bound}
d_{\mathrm K}\!\left(\frac{S}{s},G\right)
\leq \frac{\mathbb ER_3}{|\mathsf V|^{3/2}}
+\frac{2}{|\mathsf V|}\sqrt{\frac23\mathbb ER_4}.
\end{equation}
For every nonzero $a\in\mathbb R^{\mathsf V}$, with
$H=\{x\in\mathsf V:a_x\ne0\}$,
\begin{equation}
\label{eq:signed-intrinsic-bound-new}
d_{\mathrm K}\!\left(\frac{T_a}{s_a},G\right)
\leq
\frac{2^{3/2}A^3\mathbb ER_3(H)}{s_a^3}
+\frac{4A^2}{\sqrt3}
\frac{\sqrt{\mathbb ER_4(H)}}{s_a^2}
+\frac{16A^4}{3}\frac{\mathbb ER_4(H)}{s_a^4}.
\end{equation}
\end{proposition}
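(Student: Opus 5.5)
The plan is to condition on the edge configuration $\omega$ and apply \cref{lem:classical-berry-esseen} to the representation \eqref{eq:weighted-ES}. Given $\omega$, the summands $a(C)\varepsilon_C$ are independent, centered, and have conditional variance $V_a$ and absolute third moments summing to $\sum_C|a(C)|^3\le A^3R_3(H)$ by \eqref{eq:cluster-weight-bound}. On the event $\{V_a>0\}$ this gives
\[
\sup_t\bigl|\mathbb P(T_a\le t\mid\omega)-\Phi(t/\sqrt{V_a})\bigr|\le \frac{A^3R_3(H)}{V_a^{3/2}} .
\]
It then remains to compare $\Phi(t/\sqrt{V_a})$ with $\Phi(t/s_a)$ after averaging over $\omega$. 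For this I will use the elementary scale-comparison bound
\[
\sup_t|\Phi(t/\sigma_1)-\Phi(t/\sigma_2)|\le \frac{|\sigma_1^2-\sigma_2^2|}{2\min(\sigma_1^2,\sigma_2^2)},
\]
or a variant with $\sqrt{2\pi e}$ constants. This follows from the derivative of $\Phi(u/\sigma)$ in $\sigma^2$, which is bounded by $1/(2\sqrt{2\pi e}\,\sigma^2)$, and I will check that the constants fit.

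For the total spin \eqref{eq:finite-graph-magnetization-bound} the argument is clean, because $V_{\mathbf1}=R_2\ge|\mathsf V|$ deterministically by \eqref{eq:FG-basic}, and also $s^2\ge|\mathsf V|$. The conditional Berry--Esseen term is then at most $R_3/|\mathsf V|^{3/2}$, which averages to $\mathbb ER_3/|\mathsf V|^{3/2}$. The scale-comparison term is at most $|R_2-s^2|/|\mathsf V|$ times a constant at most $1$. Taking expectations, Cauchy--Schwarz gives $\mathbb E|R_2-\mathbb ER_2|\le\sqrt{\operatorname{Var}R_2}\le\sqrt{\tfrac23\mathbb ER_4}$ by \cref{lem:conditional-variance-bound}, which yields the stated bound. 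The factor $2$ leaves room for the crude constant.

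For general $a$ the main obstacle is that $V_a$ can be small or even zero, so there is no deterministic lower bound. I will split on the event $E=\{V_a\ge s_a^2/2\}$.

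On $E$, the conditional Berry--Esseen term is at most $2^{3/2}A^3R_3(H)/s_a^3$. The scale-comparison term is at most $|V_a-s_a^2|/s_a^2$ up to a constant, and Cauchy--Schwarz with \eqref{eq:conditional-variance-bound} bounds its expectation by $\sqrt{\tfrac43A^4\mathbb ER_4(H)}/s_a^2$, which is $\tfrac{2}{\sqrt3}A^2\sqrt{\mathbb ER_4(H)}/s_a^2$. Allowing a factor $2$ from the comparison lemma gives the middle term $\tfrac{4A^2}{\sqrt3}\sqrt{\mathbb ER_4(H)}/s_a^2$.

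On $E^c$, I bound the difference of conditional probabilities trivially by $1$. Chebyshev then gives $\mathbb P(E^c)\le\mathbb P(|V_a-s_a^2|>s_a^2/2)\le 4\operatorname{Var}(V_a)/s_a^4\le \tfrac{16}{3}A^4\mathbb ER_4(H)/s_a^4$, which is exactly the third term.

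Finally, I combine the two events via $|\mathbb P(T_a\le t)-\Phi(t/s_a)|\le\mathbb E|\mathbb P(T_a\le t\mid\omega)-\Phi(t/s_a)|$, splitting the expectation over $E$ and $E^c$. The only delicate point is verifying that the scale-comparison constant, taken with the minimum variance $\ge s_a^2/2$, stays within the factor $2$ assumed above.
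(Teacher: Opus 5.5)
Your proposal is correct and follows essentially the paper's route: a conditional Berry--Esseen bound on the Edwards--Sokal representation \eqref{eq:weighted-ES}, then a Gaussian scale comparison averaged via Cauchy--Schwarz and \cref{lem:conditional-variance-bound}, and finally, for general weights, a Chebyshev bound on the event $\{V_a<s_a^2/2\}$. The only difference is cosmetic: the paper applies the scale comparison to all $\omega$ through $d_{\mathrm K}(\sqrt u\,G,G)\leq2|u-1|$, whereas you apply it only on the good event, and your $\sigma^2$-derivative bound does give constants within the stated ones (smaller by a further factor $1/\sqrt{2\pi e}$).
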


\begin{proof}[Proof of \cref{prop:weighted-ising-normal-approximation}]
Fix $a\ne0$, and take $G$ to be independent of the
Edwards--Sokal coupling. Set
\[
\Theta_a:=
\begin{cases}
\displaystyle
\frac{\sum_{C\in\mathcal C(\omega)}|a(C)|^3}{V_a^{3/2}},
& V_a>0,\\[2mm]
0,& V_a=0.
\end{cases}
\]
Conditional on $\omega$ with $V_a>0$, the representation
\eqref{eq:weighted-ES} is a sum of independent centered variables.
Their variances sum to $V_a$, and their third absolute moments
sum to $\sum_C|a(C)|^3$.
Applying \cref{lem:classical-berry-esseen} with $v=V_a$
and rescaling both variables by $\sqrt{V_a}/s_a$ bounds
the conditional Kolmogorov distance between
$T_a/s_a$ and $\sqrt{V_a}G/s_a$ by $\min\{1,\Theta_a\}$.
If $V_a=0$, then every $a(C)$ is zero, so both variables vanish.
Averaging over $\omega$ gives
\begin{equation}
\label{eq:conditional-BE-mixture}
d_{\mathrm K}\!\left(
\frac{T_a}{s_a},\frac{\sqrt{V_a}}{s_a}G
\right)
\leq\mathbb E\min\{1,\Theta_a\}.
\end{equation}

We next compare $\sqrt{V_a}G/s_a$ with $G$.
For every $u\geq0$,
\begin{equation}
\label{eq:gaussian-scale-comparison}
d_{\mathrm K}(\sqrt u\,G,G)\leq2|u-1|.
\end{equation}
Applying this bound for each fixed $\omega$ and then averaging,
we obtain
\[
\begin{aligned}
d_{\mathrm K}\!\left(\frac{\sqrt{V_a}}{s_a}G,G\right)
&\leq2\mathbb E\left|\frac{V_a}{s_a^2}-1\right|
\leq\frac{2}{s_a^2}\sqrt{\operatorname{Var}(V_a)}.
\end{aligned}
\]
The last step uses $\mathbb EV_a=s_a^2$ and Cauchy--Schwarz.
Together with \eqref{eq:conditional-BE-mixture} and the triangle
inequality, this gives
\begin{equation}
\label{eq:weighted-master}
d_{\mathrm K}\!\left(\frac{T_a}{s_a},G\right)
\leq\mathbb E\min\{1,\Theta_a\}
+\frac{2}{s_a^2}\sqrt{\operatorname{Var}(V_a)}.
\end{equation}

For the total spin, take $a=\mathbf1$.
Then $V_{\mathbf1}=R_2$ and
$\Theta_{\mathbf1}=R_3/R_2^{3/2}$.
By \eqref{eq:FG-basic}, $R_2\geq |\mathsf V|$ and $s^2\geq |\mathsf V|$.
Consequently,
\[
\mathbb E\min\{1,\Theta_{\mathbf1}\}
\leq \frac{\mathbb ER_3}{|\mathsf V|^{3/2}}.
\]
Also, \eqref{eq:magnetization-variance-fluctuation} gives
\[
\frac{2}{s^2}\sqrt{\operatorname{Var}(R_2)}
\leq\frac{2}{|\mathsf V|}\sqrt{\frac23\mathbb ER_4}.
\]
Substituting these two estimates into \eqref{eq:weighted-master}
proves \eqref{eq:finite-graph-magnetization-bound}.

For general weights, $V_a$ can be small because weights within
a cluster can cancel. Define the event
\[
\mathcal B:=\{V_a<s_a^2/2\}.
\]
On $\mathcal B^{\mathrm c}$, we have $V_a\geq s_a^2/2$.
Using \eqref{eq:cluster-weight-bound}, we get
\[
\Theta_a
\leq\frac{2^{3/2}}{s_a^3}\sum_C|a(C)|^3
\leq\frac{2^{3/2}A^3R_3(H)}{s_a^3}.
\]
On $\mathcal B$, we use
$\min\{1,\Theta_a\}\leq1$.
Thus
\begin{equation}
\label{eq:conditional-BE-general-bound}
\mathbb E\min\{1,\Theta_a\}
\leq\frac{2^{3/2}A^3\mathbb ER_3(H)}{s_a^3}
+\mathbb P(\mathcal B).
\end{equation}
Since $\mathbb EV_a=s_a^2$, Chebyshev's inequality and
\eqref{eq:conditional-variance-bound} yield
\begin{equation}
\label{eq:small-conditional-variance-bound}
\mathbb P(\mathcal B)
\leq\frac{4\operatorname{Var}(V_a)}{s_a^4}
\leq\frac{16A^4}{3s_a^4}\mathbb ER_4(H).
\end{equation}
The same variance estimate gives
\begin{equation}
\label{eq:variance-mixture-bound}
\frac{2}{s_a^2}\sqrt{\operatorname{Var}(V_a)}
\leq\frac{4A^2}{\sqrt3}
\frac{\sqrt{\mathbb ER_4(H)}}{s_a^2}.
\end{equation}
Substituting \eqref{eq:conditional-BE-general-bound},
\eqref{eq:small-conditional-variance-bound}, and
\eqref{eq:variance-mixture-bound} into \eqref{eq:weighted-master}
proves \eqref{eq:signed-intrinsic-bound-new}.
\end{proof}

\subsection{Wasserstein bounds}
\label{subsec:weighted-spin-W}

We now use the same conditional representation to obtain a bound
in the $1$-Wasserstein distance.

\begin{proposition}[Finite-graph Wasserstein bound]
\label{prop:weighted-ising-wasserstein}
For every nonzero $a\in\mathbb R^{\mathsf V}$, with
$H=\{x\in\mathsf V:a_x\ne0\}$,
\begin{equation}
\label{eq:finite-graph-wasserstein-bound}
\begin{aligned}
d_{\mathrm W}\!\left(\frac{T_a}{s_a},G\right)
\leq{}&\frac{6A^3\mathbb ER_3(H)}{s_a^3}
+2\sqrt{\frac{2}{3\pi}}\,
\frac{A^2\sqrt{\mathbb ER_4(H)}}{s_a^2}\\
&+\frac{16\sqrt2}{3}\,
\frac{A^4\mathbb ER_4(H)}{s_a^4}.
\end{aligned}
\end{equation}
\end{proposition}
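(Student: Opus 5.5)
We mirror the proof of \cref{prop:weighted-ising-normal-approximation}, replacing the Kolmogorov inputs by Wasserstein ones. Take $G$ independent of the Edwards--Sokal coupling. By the triangle inequality,
\[
d_{\mathrm W}\!\left(\frac{T_a}{s_a},G\right)
\leq d_{\mathrm W}\!\left(\frac{T_a}{s_a},\frac{\sqrt{V_a}}{s_a}G\right)
+d_{\mathrm W}\!\left(\frac{\sqrt{V_a}}{s_a}G,G\right).
\]
Both terms are handled by conditioning on $\omega$. Here $d_{\mathrm W}$ of two mixtures with the same mixing variable $\omega$ is at most the average of the conditional distances, since $|\mathbb E f(X)-\mathbb E f(Y)|\leq\mathbb E|\mathbb E(f(X)\mid\omega)-\mathbb E(f(Y)\mid\omega)|$.

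\textbf{Scale term.} Couple $\sqrt u\,G$ and $G$ through the same $G$. This gives $d_{\mathrm W}(\sqrt u\,G,G)\leq|\sqrt u-1|\,\mathbb E|G|=\sqrt{2/\pi}\,|\sqrt u-1|\leq\sqrt{2/\pi}\,|u-1|$. With $u=V_a/s_a^2$, Cauchy--Schwarz and \eqref{eq:conditional-variance-bound} give
\[
\sqrt{\tfrac{2}{\pi}}\,\frac{\sqrt{\operatorname{Var}(V_a)}}{s_a^2}
\leq\sqrt{\tfrac{2}{\pi}}\cdot\frac{2A^2}{\sqrt3}\,\frac{\sqrt{\mathbb ER_4(H)}}{s_a^2}.
\]
This is exactly the middle term $2\sqrt{2/(3\pi)}\,A^2\sqrt{\mathbb ER_4(H)}/s_a^2$.

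\textbf{Conditional term.} Given $\omega$ with $V_a>0$, $T_a/\sqrt{V_a}$ is a sum of independent centered variables $a(C)\varepsilon_C/\sqrt{V_a}$ with unit total variance. The classical Wasserstein bound for such sums, $d_{\mathrm W}\leq\sum\mathbb E|\xi_i|^3$ (Chen--Goldstein--Shao, from Stein's method), bounds the conditional distance between $T_a/\sqrt{V_a}$ and $G$ by $\Theta_a$. Rescaling by $\sqrt{V_a}/s_a$ multiplies $d_{\mathrm W}$ by that factor, so the conditional distance is at most $\sqrt{V_a}\,\Theta_a/s_a=\sum_C|a(C)|^3/(V_a s_a)$. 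Unlike the Kolmogorov case, it cannot be truncated at $1$. Instead it is also trivially bounded by
\[
\mathbb E\Bigl(\tfrac{|T_a|}{s_a}\Bigm|\omega\Bigr)+\tfrac{\sqrt{V_a}}{s_a}\,\mathbb E|G|\leq 2\sqrt{V_a}/s_a .
\]
Split on $\mathcal B=\{V_a<s_a^2/2\}$:
\begin{itemize}
\item[(a)] On $\mathcal B^{\mathrm c}$, $V_a\geq s_a^2/2$, so the bound is at most $2\sum_C|a(C)|^3/s_a^3\leq 2A^3R_3(H)/s_a^3$ by \eqref{eq:cluster-weight-bound}.
\item[(b)] On $\mathcal B$, $\sqrt{V_a}/s_a<1/\sqrt2$, so the contribution is at most $\sqrt2\,\mathbb P(\mathcal B)\leq\sqrt2\cdot\frac{16A^4}{3s_a^4}\mathbb ER_4(H)$ by \eqref{eq:small-conditional-variance-bound}. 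This is the third term.
\end{itemize}
Part (a) yields a constant $2$, better than the stated $6$. The constant $6$ presumably comes from the authors using a Stein-method bound with a worse constant, so either version suffices.

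\textbf{Main obstacle.} The delicate point is the conditional Wasserstein Berry--Esseen step. We must quote a version valid with an explicit constant, e.g. $d_{\mathrm W}\leq 3\sum\mathbb E|\xi_i|^3$ from Stein's method (with the factor $2$ from $V_a\geq s_a^2/2$ this gives $6$). We must also make sure the degenerate case $V_a=0$, where both conditional laws are $\delta_0$, and the contribution on $\mathcal B$ are controlled without a cap at $1$. The bound $2\sqrt{V_a}/s_a\leq\sqrt2$ handles the latter. Summing the three contributions gives \eqref{eq:finite-graph-wasserstein-bound}.
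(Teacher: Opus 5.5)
Your proposal is correct and follows essentially the same route as the paper. Both arguments use the triangle inequality through $\sqrt{V_a}G/s_a$, and both apply a conditional independent-sum Wasserstein bound split on $\mathcal B=\{V_a<s_a^2/2\}$, with the trivial Lipschitz bound $2\sqrt{V_a}/s_a\leq\sqrt2$ on $\mathcal B$. Both also handle the scale term by a common-$G$ coupling, Cauchy--Schwarz and the conditional-variance lemma.

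The only difference is the constant in the independent-sum input. The paper quotes Chen's bound with constant $3$, which produces the $6$. The constant-one bound you cite (Goldstein's zero-bias estimate, reproduced in Chen--Goldstein--Shao) is also valid. It yields $2A^3\mathbb ER_3(H)/s_a^3$, which implies the stated inequality.
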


\begin{proof}[Proof of \cref{prop:weighted-ising-wasserstein}]
We use the independent-sum bound in
\citet[Corollary~2.5]{Chen2015}.
Let $\xi_1,\ldots,\xi_k$ be independent random variables with
mean zero and finite third absolute moments, and set
$v:=\sum_{i=1}^k\mathbb E\xi_i^2>0$.
Then
\begin{equation}
\label{eq:weighted-W-independent-sum}
d_{\mathrm W}\!\left(\frac{\sum_{i=1}^k\xi_i}{\sqrt v},G\right)
\leq\frac{3}{v^{3/2}}\sum_{i=1}^k\mathbb E|\xi_i|^3.
\end{equation}

Take $G$ to be independent of the Edwards--Sokal coupling.
Fix a function $f$ with $\operatorname{Lip}(f)\leq1$.
Conditional on $\omega$ with $V_a>0$, apply
\eqref{eq:weighted-W-independent-sum} to the summands
$a(C)\varepsilon_C$ in \eqref{eq:weighted-ES}.
Rescaling by $\sqrt{V_a}/s_a$ gives
\begin{equation}
\label{eq:weighted-W-conditional-comparison}
\left|\mathbb E\!\left[
f\!\left(\frac{T_a}{s_a}\right)
-f\!\left(\frac{\sqrt{V_a}}{s_a}G\right)
\,\middle|\,\omega\right]\right|
\leq\frac{3}{s_aV_a}\sum_C|a(C)|^3.
\end{equation}
If $V_a=0$, both arguments of $f$ are zero, so the conditional
difference is zero.

As in the Kolmogorov proof, define
\[
\mathcal B:=\{V_a<s_a^2/2\}.
\]
On $\mathcal B^{\mathrm c}$, the right-hand side of
\eqref{eq:weighted-W-conditional-comparison} is at most
\[
\frac{6}{s_a^3}\sum_C|a(C)|^3
\leq\frac{6A^3}{s_a^3}R_3(H).
\]
On $\mathcal B$, the Lipschitz property gives
\[
\begin{aligned}
\left|\mathbb E\!\left[
f\!\left(\frac{T_a}{s_a}\right)
-f\!\left(\frac{\sqrt{V_a}}{s_a}G\right)
\,\middle|\,\omega\right]\right|
&\leq\frac{\mathbb E(|T_a|\mid\omega)}{s_a}
+\frac{\sqrt{V_a}}{s_a}\mathbb E|G|\\
&\leq\frac{2\sqrt{V_a}}{s_a}
\leq\sqrt2.
\end{aligned}
\]
Here we used $\mathbb E(T_a^2\mid\omega)=V_a$ and
$\mathbb E|G|\leq1$.
The probability estimate \eqref{eq:small-conditional-variance-bound}
gives
\[
\mathbb P(\mathcal B)
\leq\frac{16A^4}{3s_a^4}\mathbb ER_4(H).
\]
Averaging over $\omega$ and taking the supremum over $f$ yields
\begin{equation}
\label{eq:weighted-W-mixture-comparison}
d_{\mathrm W}\!\left(
\frac{T_a}{s_a},\frac{\sqrt{V_a}}{s_a}G\right)
\leq\frac{6A^3\mathbb ER_3(H)}{s_a^3}
+\frac{16\sqrt2}{3}\,
\frac{A^4\mathbb ER_4(H)}{s_a^4}.
\end{equation}

It remains to compare $\sqrt{V_a}G/s_a$ with $G$.
Using the same $G$ in both variables and its independence from
$V_a$, we obtain
\[
d_{\mathrm W}\!\left(\frac{\sqrt{V_a}}{s_a}G,G\right)
\leq\sqrt{\frac2\pi}\,
\mathbb E\left|\frac{\sqrt{V_a}}{s_a}-1\right|.
\]
For every $u\geq0$, $|\sqrt u-1|\leq|u-1|$.
Since $\mathbb EV_a=s_a^2$, Cauchy--Schwarz gives
\[
\mathbb E\left|\frac{\sqrt{V_a}}{s_a}-1\right|
\leq\frac{\mathbb E|V_a-s_a^2|}{s_a^2}
\leq\frac{\sqrt{\operatorname{Var}(V_a)}}{s_a^2}.
\]
Combining these inequalities with
\eqref{eq:conditional-variance-bound}, we get
\begin{equation}
\label{eq:weighted-W-random-scale}
d_{\mathrm W}\!\left(\frac{\sqrt{V_a}}{s_a}G,G\right)
\leq2\sqrt{\frac{2}{3\pi}}\,
\frac{A^2\sqrt{\mathbb ER_4(H)}}{s_a^2}.
\end{equation}
The triangle inequality, \eqref{eq:weighted-W-mixture-comparison},
and \eqref{eq:weighted-W-random-scale} prove
\eqref{eq:finite-graph-wasserstein-bound}.
\end{proof}

We now express \eqref{eq:signed-intrinsic-bound-new} and
\eqref{eq:finite-graph-wasserstein-bound} in terms of cluster
moments and a lower bound on the variance.

\begin{corollary}[Normal approximation from cluster moments]
\label{cor:cluster-moment-normal-approximation}
Let $a\ne0$ satisfy $A\leq1$, and set $M:=|H|$.
Suppose that there are constants $b_2,b_3>0$ such that
\begin{equation}
\label{eq:cluster-moment-assumptions}
\mathbb E|C(x)|^2\leq b_2,
\qquad
\mathbb E|C(x)|^3\leq b_3
\qquad\text{for every }x\in H.
\end{equation}
The expectations are taken under $\phi_{\mathsf V,K,2}$.
Assume also that $s_a^2\geq cM$ for a constant $c>0$.
Then there exists $C=C(b_2,b_3,c)<\infty$ such that
\begin{equation}
\label{eq:cluster-moment-normal-rates}
\begin{aligned}
d_{\mathrm K}\!\left(\frac{T_a}{s_a},G\right)
&\leq CM^{-1/2},\\
d_{\mathrm W}\!\left(\frac{T_a}{s_a},G\right)
&\leq CM^{-1/2}.
\end{aligned}
\end{equation}
Both bounds hold under $\mu_{\mathsf V,K}$.
The constant $C$ depends only on $b_2,b_3,c$.
\end{corollary}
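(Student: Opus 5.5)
We will obtain both bounds by substituting into \cref{prop:weighted-ising-normal-approximation} (inequality \eqref{eq:signed-intrinsic-bound-new}) and \cref{prop:weighted-ising-wasserstein} (inequality \eqref{eq:finite-graph-wasserstein-bound}). Since $A\le 1$, every power of $A$ may be dropped. The task then reduces to bounding $\mathbb ER_3(H)$ and $\mathbb ER_4(H)$ linearly in $M$, and using $s_a^2\ge cM$ in each denominator.

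\textbf{Bounding $\mathbb ER_3(H)$.} By the identity \eqref{eq:localized-cluster-sum-identity} and $|C(x)\cap H|\le|C(x)|$,
\[
\mathbb ER_3(H)=\sum_{x\in H}\mathbb E|C(x)\cap H|^2\le\sum_{x\in H}\mathbb E|C(x)|^2\le b_2M .
\]

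\textbf{Bounding $\mathbb ER_4(H)$.} The same identity would give $\mathbb ER_4(H)\le\sum_{x\in H}\mathbb E|C(x)|^3\le b_3M$. Before that it is worth checking whether a third moment really suffices. Here it does: \eqref{eq:localized-cluster-sum-identity} with $\ell=4$ involves $|C(x)\cap H|^{3}$, so only third moments of cluster sizes enter, exactly matching the hypothesis \eqref{eq:cluster-moment-assumptions}.

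\textbf{Kolmogorov bound.} Inserting these estimates into \eqref{eq:signed-intrinsic-bound-new} gives
\[
d_{\mathrm K}\le \frac{2^{3/2}b_2M}{(cM)^{3/2}}+\frac{4}{\sqrt3}\frac{\sqrt{b_3M}}{cM}+\frac{16}{3}\frac{b_3M}{(cM)^2}.
\]
The first two terms are of order $M^{-1/2}$. The last term is of order $M^{-1}\le M^{-1/2}$, since $M\ge1$ (we have $a\ne0$, so $H\neq\emptyset$).

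\textbf{Wasserstein bound.} The same substitution in \eqref{eq:finite-graph-wasserstein-bound} yields identical orders. Taking $C$ as the maximum of the resulting combinations of $b_2,b_3,c$ proves \eqref{eq:cluster-moment-normal-rates}.

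\textbf{Laws.} Finally, $T_a$ is a function of the spins only, and by \cref{prop:ES-identities}(i) the spin marginal is $\mu_{\mathsf V,K}$. The bounds therefore hold under $\mu_{\mathsf V,K}$. Likewise, the cluster expectations are taken under $\phi_{\mathsf V,K,2}$ by \cref{prop:ES-identities}(ii).

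No step is a real obstacle. The only point needing care is matching moment orders: we must recognize that $R_4(H)$ needs only the third moment of $|C(x)|$, via \eqref{eq:localized-cluster-sum-identity}, and that $R_3(H)$ needs only the second.
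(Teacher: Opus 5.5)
Your proposal is correct and follows the paper's proof essentially verbatim. You bound $\mathbb ER_3(H)\le b_2M$ and $\mathbb ER_4(H)\le b_3M$ via \eqref{eq:localized-cluster-sum-identity} and $|C(x)\cap H|\le|C(x)|$, substitute into \eqref{eq:signed-intrinsic-bound-new} and \eqref{eq:finite-graph-wasserstein-bound} using $A\le1$ and $s_a^2\ge cM$, and absorb the $M^{-1}$ term using $M\ge1$.
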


\begin{proof}[Proof of \cref{cor:cluster-moment-normal-approximation}]
By \eqref{eq:localized-cluster-sum-identity},
$|C(x)\cap H|\leq|C(x)|$, and
\eqref{eq:cluster-moment-assumptions}, we have
\begin{align*}
\mathbb ER_3(H)
&=\sum_{x\in H}\mathbb E|C(x)\cap H|^2
\leq\sum_{x\in H}\mathbb E|C(x)|^2
\leq b_2M,\\
\mathbb ER_4(H)
&=\sum_{x\in H}\mathbb E|C(x)\cap H|^3
\leq\sum_{x\in H}\mathbb E|C(x)|^3
\leq b_3M.
\end{align*}
Since $A\leq1$ and $s_a^2\geq cM$, it follows that
\begin{equation}
\label{eq:cluster-moment-ratios}
\begin{aligned}
\frac{A^3\mathbb ER_3(H)}{s_a^3}
&\leq\frac{b_2}{c^{3/2}}M^{-1/2},\\
\frac{A^2\sqrt{\mathbb ER_4(H)}}{s_a^2}
&\leq\frac{\sqrt{b_3}}{c}M^{-1/2},\\
\frac{A^4\mathbb ER_4(H)}{s_a^4}
&\leq\frac{b_3}{c^2}M^{-1}.
\end{aligned}
\end{equation}
Because $a\ne0$, we have $M\geq1$, so $M^{-1}\leq M^{-1/2}$.
Substituting \eqref{eq:cluster-moment-ratios} into
\eqref{eq:signed-intrinsic-bound-new} and
\eqref{eq:finite-graph-wasserstein-bound} proves
\eqref{eq:cluster-moment-normal-rates}.
\end{proof}

For the total spin, $a=\mathbf1$, $H=\mathsf V$, and $M=|\mathsf V|$.
By \eqref{eq:FG-basic}, $s_{\mathbf1}^2\geq |\mathsf V|$, so
\cref{cor:cluster-moment-normal-approximation} applies with
$c=1$ once \eqref{eq:cluster-moment-assumptions} is verified.
In \cref{sec:proof-applications}, we establish the cluster
moment and variance estimates needed to apply
\cref{cor:cluster-moment-normal-approximation} to the models
in \cref{sec:applications-new}.
For the infinite-volume measures, we keep the finite observation
set fixed and enlarge the surrounding boxes.
We obtain both bounds with constants independent of the boxes,
and then pass to the infinite-volume limit.

\section{Proofs of the main theorems}\label{sec:proof-applications}

We prove the Kolmogorov and $1$-Wasserstein bounds stated in
\cref{sec:applications-new}.
For the free-boundary and fully periodized ferromagnetic models,
we verify the cluster moment and variance assumptions of
\cref{cor:cluster-moment-normal-approximation}.
We then apply \cref{lem:fixed-set-limit} to obtain the
infinite-volume ferromagnetic bounds.
The observation set remains fixed while the surrounding boxes grow.
For the nearest-neighbor model,
\cref{prop:nearest-neighbor-local-approximation} treats both signs
of $\beta$ and gives the estimates used in the finite-volume
and infinite-volume proofs.

\subsection{Free-boundary model}
\label{sec:free-long-range}

Let $J$ satisfy \eqref{eq:intro-summable-J}, and let $\beta\geq0$.
For a nonempty finite set $\Lambda\subset\mathbb Z^d$, define
\begin{equation}
\label{eq:finite-free-edge-set}
E_\Lambda
:=\bigl\{\{x,y\}:x,y\in\Lambda,\ x\ne y,\ J(x-y)>0\bigr\}.
\end{equation}
With $\mathsf V=\Lambda$, $\mathsf E=E_\Lambda$, and
$K_{xy}=\beta J(x-y)$, the measure
$\mu^{\mathrm f}_{\Lambda,\beta}$ is the Ising measure in
\eqref{eq:finite-ising-new}.
Write $\phi^{\mathrm f}_{\Lambda,\beta,2}$ for its FK measure
defined by \eqref{eq:finite-random-cluster-measure}.
The edge parameters are $p_{xy}=1-e^{-2\beta J(x-y)}$.
For $x\in\Lambda$, let $C_\Lambda(x)$ be the open cluster
containing $x$.

For the infinite-volume model, define the weighted graph
\begin{equation}
\label{eq:support-weighted-graph}
\begin{aligned}
\mathsf G_J&=(\mathbb Z^d,E_J,w),\\
E_J&:=\bigl\{\{x,y\}:x,y\in\mathbb Z^d,\ x\ne y,\
J(x-y)>0\bigr\},\\
w_{\{x,y\}}&:=2J(x-y),
\qquad \{x,y\}\in E_J.
\end{aligned}
\end{equation}
Since $w_{\{x,y\}}=2J(x-y)$, the FK edge parameter satisfies
\[
p_{xy}=1-e^{-2\beta J(x-y)}
      =1-e^{-\beta w_{\{x,y\}}}.
\]
By \eqref{eq:intro-summable-J},
\begin{equation}
\label{eq:support-weighted-degree}
\sum_{\substack{y\in\mathbb Z^d\\\{x,y\}\in E_J}}
w_{\{x,y\}}
=2\sum_{z\in\mathbb Z^d}J(z)
=2\|J\|_1<\infty
\qquad(x\in\mathbb Z^d).
\end{equation}

As $R\to\infty$, the free FK measures
$\phi^{\mathrm f}_{B_R,\beta,2}$ converge to a measure on
$\{0,1\}^{E_J}$, denoted by $\phi^{\mathrm f}_{\beta,2}$.
Here convergence means convergence of the joint distribution
of every fixed finite set of edge variables.
The limit exists; see \citet[Section~1.2]{Hutchcroft2020}.
Replacing the boxes by any increasing sequence of finite sets
whose union is $\mathbb Z^d$ gives the same limit.
For $x\in\mathbb Z^d$, let $C(x)$ be the open cluster
containing $x$ under $\phi^{\mathrm f}_{\beta,2}$.
For real $r\geq1$, set
\begin{equation}
\label{eq:free-cluster-moment-definition}
M_r(\beta,J):=\mathbb E|C(0)|^r\in[1,\infty].
\end{equation}
The expectation in \eqref{eq:free-cluster-moment-definition}
is taken under $\phi^{\mathrm f}_{\beta,2}$.

For every $0\leq\beta<\beta_{\mathrm c}(J)$, we will prove
\begin{equation}
\label{eq:free-moment-obligations}
\sup_{\substack{\varnothing\ne\Lambda\subset\mathbb Z^d
                 \text{ finite}\\x\in\Lambda}}
\mathbb E|C_\Lambda(x)|^r
\leq M_r(\beta,J)<\infty,
\qquad r=1,2,3.
\end{equation}
The expectation in \eqref{eq:free-moment-obligations}
is taken under $\phi^{\mathrm f}_{\Lambda,\beta,2}$.
The cases $r=2,3$ verify the cluster moment assumptions of
\cref{cor:cluster-moment-normal-approximation} for the total spin.
We also use the case $r=1$ to prove the variance upper bound
$s_\Lambda^2\leq|\Lambda|\chi_J(\beta)$ required in
\cref{rem:kolmogorov-rate}.

To prove \eqref{eq:free-moment-obligations}, we first compare
$\beta_{\mathrm c}(J)$ with the free FK percolation threshold,
defined by
\begin{equation}
\label{eq:free-FK-critical-parameter}
\beta_{\mathrm c}^{\mathrm{FK},\mathrm f}
:=\inf\bigl\{\beta\geq0:
\phi^{\mathrm f}_{\beta,2}(|C(0)|=\infty)>0\bigr\},
\end{equation}
where $\inf\varnothing=\infty$.

\begin{proposition}[Free FK threshold and exponential cluster-size decay]
\label{prop:critical-equivalence}
Let $J$ satisfy \eqref{eq:intro-summable-J}.
Then
\begin{equation}
\label{eq:free-FK-critical-parameter-equality}
\beta_{\mathrm c}^{\mathrm{FK},\mathrm f}=\beta_{\mathrm c}(J).
\end{equation}
For every $\beta\geq0$,
\begin{equation}
\label{eq:infinite-free-ES-identity}
\chi_J(\beta)=\mathbb E|C(0)|.
\end{equation}
The expectation is taken under $\phi^{\mathrm f}_{\beta,2}$,
and the equality holds in $[0,\infty]$.
For every $0\leq\beta<\beta_{\mathrm c}(J)$, there exist
constants $A_{\beta,J}<\infty$ and $a_{\beta,J}>0$ such that
\begin{equation}
\label{eq:hutchcroft-tail}
\phi^{\mathrm f}_{\beta,2}(|C(0)|\geq m)
\leq A_{\beta,J}e^{-a_{\beta,J}m}
\qquad\text{for every integer }m\geq1.
\end{equation}
\end{proposition}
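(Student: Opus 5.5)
We will prove the three assertions in the order \eqref{eq:infinite-free-ES-identity}, then \eqref{eq:free-FK-critical-parameter-equality}, then \eqref{eq:hutchcroft-tail}. The last two depend on the first, and the exponential tail is imported from Hutchcroft's sharpness theorem for long-range FK/Ising models on weighted transitive graphs.

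\emph{Step 1: the identity $\chi_J(\beta)=\mathbb E|C(0)|$.} Fix $x\in\mathbb Z^d$. For $R$ large enough that $0,x\in B_R$, \eqref{eq:two-point-connectivity} on the finite graph $(B_R,E_{B_R})$ gives
\[
\mu^{\mathrm f}_{B_R,\beta}[\sigma_0\sigma_x]=\phi^{\mathrm f}_{B_R,\beta,2}(0\leftrightarrow x).
\]
Both sides converge as $R\to\infty$.

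For the left side, $\sigma_0\sigma_x$ is a local function, so convergence follows from \eqref{eq:intro-infinite-volume}. The right side is more delicate, because $\{0\leftrightarrow x\}$ is not local. Here we use monotonicity: by the domain Markov property and FKG for $q=2$, $\phi^{\mathrm f}_{B_R,\beta,2}$ is stochastically increasing in $R$. The event $\{0\leftrightarrow x \text{ inside } B_R\}$ is therefore increasing both as an event and in $R$, and its probability converges to $\phi^{\mathrm f}_{\beta,2}(0\leftrightarrow x)$ by monotone convergence together with the monotone-limit construction of $\phi^{\mathrm f}_{\beta,2}$.

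Summing over $x$ then gives
\[
\chi_J(\beta)=\sum_x\phi^{\mathrm f}_{\beta,2}(0\leftrightarrow x)=\mathbb E|C(0)|,
\]
with equality in $[0,\infty]$ by Tonelli.

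\emph{Step 2: equality of thresholds.} One inequality is immediate. If $\beta<\beta_{\mathrm c}(J)$, Step 1 gives $\mathbb E|C(0)|<\infty$, hence $|C(0)|<\infty$ almost surely, so $\beta\le\beta_{\mathrm c}^{\mathrm{FK},\mathrm f}$. This yields $\beta_{\mathrm c}(J)\le\beta_{\mathrm c}^{\mathrm{FK},\mathrm f}$.

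The reverse inequality says that $\chi_J(\beta)<\infty$ whenever there is no percolation. This is the sharpness of the phase transition, and we plan to invoke it rather than reprove it. The graph $\mathsf G_J$ is a transitive weighted graph with finite total weight per vertex by \eqref{eq:support-weighted-degree}. For such graphs, Hutchcroft \citep{Hutchcroft2020} proves sharpness for the free FK measure with $q=2$ (via the Ising random current/OSSS-type method, building on Aizenman--Barsky--Fernández and Duminil-Copin--Tassion). Concretely, for every $\beta<\beta_{\mathrm c}^{\mathrm{FK},\mathrm f}$ the cluster volume has an exponential tail, and in particular $\mathbb E|C(0)|<\infty$. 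Combined with Step 1, this gives $\beta_{\mathrm c}^{\mathrm{FK},\mathrm f}\le\beta_{\mathrm c}(J)$.

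\emph{Step 3: the exponential tail.} Once the thresholds are identified, \eqref{eq:hutchcroft-tail} for $\beta<\beta_{\mathrm c}(J)=\beta_{\mathrm c}^{\mathrm{FK},\mathrm f}$ is exactly the exponential volume-tail part of the same result. If the cited theorem only gives finite susceptibility, we will instead bootstrap: use the Ising analogue of the Aizenman--Newman/Hammersley tree-graph bound, $\phi(|C(0)|\ge m)$ decaying exponentially once $\chi<\infty$ at a slightly larger $\beta'<\beta_{\mathrm c}$. This works through a standard differential-inequality argument in $\beta$.

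\emph{Main obstacle.} The main risk is Step 2 and Step 3: matching the exact hypotheses of the cited sharpness theorem. These are transitivity, summable weights, the convention that $p_e=1-e^{-\beta w_e}$ with $w=2J$, and the use of the \emph{free} measure rather than the wired one, all of which we must align with the form of the theorem in the reference. A secondary point is the non-locality issue in Step 1, which we handle through monotonicity in $R$.
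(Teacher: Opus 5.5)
Your proposal is correct and follows the paper's proof essentially step by step. You use the box two-point identity plus free-boundary domain monotonicity (the paper makes this explicit with an increasing coupling $\omega^{(R)}\uparrow\omega$) and Tonelli to get $\chi_J(\beta)=\mathbb E|C(0)|$; finiteness of $\chi_J$ below $\beta_{\mathrm c}(J)$ gives $\beta_{\mathrm c}(J)\le\beta_{\mathrm c}^{\mathrm{FK},\mathrm f}$; and Hutchcroft's Theorem~1.5 on the transitive weighted graph $\mathsf G_J$ gives both the reverse inequality and the exponential tail. Your Step~3 fallback is not needed, since that theorem already gives the exponential volume tail for the free $q=2$ measure whenever $\beta<\beta_{\mathrm c}^{\mathrm{FK},\mathrm f}$; you should, however, set aside the trivial case $J\equiv0$ before applying it, as the paper does.
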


\begin{proof}[Proof of \cref{prop:critical-equivalence}]
If $J\equiv0$, every FK cluster is a singleton and
$\chi_J(\beta)=1$ for every $\beta\geq0$.
Thus \eqref{eq:infinite-free-ES-identity} holds, and
$\beta_{\mathrm c}^{\mathrm{FK},\mathrm f}
=\beta_{\mathrm c}(J)=\infty$.
The bound \eqref{eq:hutchcroft-tail} holds with
$A_{\beta,J}=e$ and $a_{\beta,J}=1$.
We now assume $J\not\equiv0$.

Fix $\beta\geq0$ and $x\in\mathbb Z^d$.
For every box $B_R$ containing $0$ and $x$,
\eqref{eq:two-point-connectivity} gives
\[
\mathbb E[\sigma_0\sigma_x]
=\phi^{\mathrm f}_{B_R,\beta,2}(0\leftrightarrow x).
\]
Here the expectation is taken under $\mu^{\mathrm f}_{B_R,\beta}$.

We next justify the limit of the FK connectivity probability.
For finite sets $\Lambda\subseteq\Delta$, condition all edges
in $E_\Delta\setminus E_\Lambda$ to be closed.
Under $\phi^{\mathrm f}_{\Delta,\beta,2}$, the conditional law
on $E_\Lambda$ is $\phi^{\mathrm f}_{\Lambda,\beta,2}$.
The conditioning event has positive probability since $p_e<1$.
An event is called increasing if, whenever it occurs for an
edge configuration, it also occurs after any closed edges are opened.
Positive association of the FK measure
\citep[Section~3.2]{Grimmett2006} implies that conditioning
edges to be closed cannot increase the probability of an
increasing event.
Thus the free FK measures increase under enlargement of the
vertex set.

Extend each configuration on $E_{B_R}$ to $E_J$ by closing
all edges outside $E_{B_R}$.
We can therefore construct configurations $\omega^{(R)}$
on one probability space, with law
$\phi^{\mathrm f}_{B_R,\beta,2}$ on $E_{B_R}$, such that
\[
\omega^{(R)}_e\leq\omega^{(R+1)}_e
\qquad(e\in E_J).
\]
The limiting configuration
$\omega=(\lim_{R\to\infty}\omega^{(R)}_e)_{e\in E_J}$
has law $\phi^{\mathrm f}_{\beta,2}$.
Every open path from $0$ to $x$ in $\omega$ uses finitely
many edges.
The edges of that path are all open in $\omega^{(R)}$ for
all sufficiently large $R$.
Consequently,
\[
\lim_{R\to\infty}
\phi^{\mathrm f}_{B_R,\beta,2}(0\leftrightarrow x)
=\phi^{\mathrm f}_{\beta,2}(0\leftrightarrow x).
\]
Taking the free limits in \eqref{eq:two-point-connectivity}
and using \eqref{eq:intro-infinite-volume}, we obtain
\[
\mathbb E[\sigma_0\sigma_x]
=\phi^{\mathrm f}_{\beta,2}(0\leftrightarrow x),
\]
where the expectation is now taken under $\mu^\infty_\beta$.
By \eqref{eq:free-susceptibility-critical-point} and Tonelli's
theorem,
\begin{align}
\chi_J(\beta)
&=\sum_{x\in\mathbb Z^d}
\phi^{\mathrm f}_{\beta,2}(0\leftrightarrow x)\notag\\
&=\mathbb E\sum_{x\in\mathbb Z^d}
\mathbf1_{\{0\leftrightarrow x\}}
=\mathbb E|C(0)|.
\label{eq:infinite-susceptibility-calculation}
\end{align}
The expectations in \eqref{eq:infinite-susceptibility-calculation}
are taken under $\phi^{\mathrm f}_{\beta,2}$.
This proves \eqref{eq:infinite-free-ES-identity}, including
the case where both sides are infinite.

Translations of $\mathbb Z^d$ preserve $E_J$ and the weights
$w_{\{x,y\}}=2J(x-y)$.
Hence $\mathsf G_J$ is an infinite transitive weighted graph.
The total weight incident to each vertex is finite by
\eqref{eq:support-weighted-degree}.
\citet[Theorem~1.5]{Hutchcroft2020} therefore applies with
$q=2$ and free boundary conditions.
For every $0\leq\beta<\beta_{\mathrm c}^{\mathrm{FK},\mathrm f}$,
it gives
\[
\phi^{\mathrm f}_{\beta,2}(|C(0)|\geq m)
\leq A_{\beta,J}e^{-a_{\beta,J}m}
\qquad(m\geq1)
\]
for some $A_{\beta,J}<\infty$ and $a_{\beta,J}>0$.
In particular,
\[
\mathbb E|C(0)|
=\sum_{m\geq1}\phi^{\mathrm f}_{\beta,2}(|C(0)|\geq m)
\leq A_{\beta,J}\sum_{m\geq1}e^{-a_{\beta,J}m}
<\infty,
\]
with expectation under $\phi^{\mathrm f}_{\beta,2}$.
By \eqref{eq:infinite-free-ES-identity},
$\chi_J(\beta)<\infty$ for every
$0\leq\beta<\beta_{\mathrm c}^{\mathrm{FK},\mathrm f}$.
The definition of $\beta_{\mathrm c}(J)$ therefore gives
\begin{equation}
\label{eq:free-FK-below-susceptibility-threshold}
\beta_{\mathrm c}^{\mathrm{FK},\mathrm f}
\leq\beta_{\mathrm c}(J).
\end{equation}
If $\beta_{\mathrm c}^{\mathrm{FK},\mathrm f}=0$, this inequality
follows directly from $\beta_{\mathrm c}(J)\geq0$.

Conversely, fix $0\leq\beta<\beta_{\mathrm c}(J)$.
The monotonicity of $\chi_J$ stated after
\eqref{eq:free-susceptibility-critical-point} gives
$\chi_J(\beta)<\infty$.
By \eqref{eq:infinite-free-ES-identity},
$\mathbb E|C(0)|<\infty$ under $\phi^{\mathrm f}_{\beta,2}$, so
\[
\phi^{\mathrm f}_{\beta,2}(|C(0)|=\infty)=0.
\]
The probability
$\phi^{\mathrm f}_{\beta,2}(|C(0)|=\infty)$ is nondecreasing
in $\beta$; see \citet[Section~1.2]{Hutchcroft2020}.
Hence
\[
\phi^{\mathrm f}_{t,2}(|C(0)|=\infty)=0
\qquad(0\leq t\leq\beta).
\]
By \eqref{eq:free-FK-critical-parameter},
$\beta\leq\beta_{\mathrm c}^{\mathrm{FK},\mathrm f}$.
Since this holds for every $0\leq\beta<\beta_{\mathrm c}(J)$,
\begin{equation}
\label{eq:susceptibility-below-free-FK-threshold}
\beta_{\mathrm c}(J)
\leq\beta_{\mathrm c}^{\mathrm{FK},\mathrm f}.
\end{equation}
If $\beta_{\mathrm c}(J)=0$, this inequality follows from
$\beta_{\mathrm c}^{\mathrm{FK},\mathrm f}\geq0$.
Both threshold comparisons remain valid in $[0,\infty]$.
Indeed, if $\beta_{\mathrm c}^{\mathrm{FK},\mathrm f}=\infty$,
then $\chi_J(\beta)<\infty$ for every finite $\beta\geq0$,
so $\beta_{\mathrm c}(J)=\infty$.
If $\beta_{\mathrm c}(J)=\infty$, then
$\beta\leq\beta_{\mathrm c}^{\mathrm{FK},\mathrm f}$ for
every finite $\beta\geq0$, so
$\beta_{\mathrm c}^{\mathrm{FK},\mathrm f}=\infty$.
Combining
\eqref{eq:free-FK-below-susceptibility-threshold} and
\eqref{eq:susceptibility-below-free-FK-threshold}
proves \eqref{eq:free-FK-critical-parameter-equality}.
Substituting
$\beta_{\mathrm c}^{\mathrm{FK},\mathrm f}=\beta_{\mathrm c}(J)$
into the parameter range of
\citet[Theorem~1.5]{Hutchcroft2020} proves
\eqref{eq:hutchcroft-tail}.
\end{proof}

We next prove \eqref{eq:free-moment-obligations} by comparing
the finite-volume cluster moments with $M_r(\beta,J)$.

\begin{proposition}[Comparison of finite-volume and infinite-volume cluster moments]
\label{prop:cluster-moments}
Let $J$ satisfy \eqref{eq:intro-summable-J}, and fix
$0\leq\beta<\beta_{\mathrm c}(J)$.
For every real $r\geq1$, every nonempty finite set
$\Lambda\subset\mathbb Z^d$, and every $x\in\Lambda$,
\begin{equation}
\label{eq:finite-moment-comparison}
\mathbb E|C_\Lambda(x)|^r\leq M_r(\beta,J)<\infty.
\end{equation}
The expectation is taken under $\phi^{\mathrm f}_{\Lambda,\beta,2}$,
and $M_r(\beta,J)$ is defined in
\eqref{eq:free-cluster-moment-definition}.
\end{proposition}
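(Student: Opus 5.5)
The plan is to prove two separate facts. First, the finite-volume cluster of $x$ is stochastically dominated by the infinite-volume free cluster of $x$. Second, the infinite-volume moments $M_r(\beta,J)$ are finite by the exponential tail \eqref{eq:hutchcroft-tail}.

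\textbf{Finiteness of $M_r(\beta,J)$.} Fix $r\geq1$ and $0\leq\beta<\beta_{\mathrm c}(J)$. Since $|C(0)|$ takes values in $\{1,2,\dots\}\cup\{\infty\}$, we write
\[
\mathbb E|C(0)|^r=\sum_{m\geq1}\bigl(m^r-(m-1)^r\bigr)\,\phi^{\mathrm f}_{\beta,2}(|C(0)|\geq m).
\]
By \cref{prop:critical-equivalence}, this is at most $A_{\beta,J}\sum_{m\geq1} r m^{r-1}e^{-a_{\beta,J}m}<\infty$. In particular $|C(0)|<\infty$ almost surely.

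\textbf{Domination.} Fix a finite $\Lambda$ and $x\in\Lambda$. Translation invariance of $\phi^{\mathrm f}_{\beta,2}$ gives $\mathbb E|C(x)|^r=M_r(\beta,J)$. Invariance holds because the limit is independent of the exhausting sequence, so the translated boxes $x+B_R$ may be used. Take $R$ so large that $\Lambda\subseteq B_R$.

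As in the proof of \cref{prop:critical-equivalence}, condition all edges of $E_{B_R}\setminus E_\Lambda$ to be closed under $\phi^{\mathrm f}_{B_R,\beta,2}$; the conditional law on $E_\Lambda$ is $\phi^{\mathrm f}_{\Lambda,\beta,2}$. By positive association (FKG), for every $m$ the increasing event $\{|C(x)|\geq m\}$ satisfies
\[
\phi^{\mathrm f}_{\Lambda,\beta,2}(|C_\Lambda(x)|\geq m)\leq \phi^{\mathrm f}_{B_R,\beta,2}(|C_{B_R}(x)|\geq m).
\]
Here we also use that the cluster of $x$ in the extended configuration equals $C_\Lambda(x)$ when the outside edges are closed. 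Equivalently, we can use the monotone coupling $\omega^{(R)}$ constructed there, started from $\Lambda$ instead of a box.

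\textbf{Passage to the limit.} The event $\{|C(x)|\geq m\}$ is not local, so we argue through the monotone coupling. Since $\omega^{(R)}\leq\omega$ edgewise, we have $C_{B_R}(x)\subseteq C(x)$ in the coupled limit. Hence
\[
\phi^{\mathrm f}_{B_R,\beta,2}(|C_{B_R}(x)|\geq m)\leq\phi^{\mathrm f}_{\beta,2}(|C(x)|\geq m).
\]
Equivalently, we may build a single increasing coupling with $\omega^{(\Lambda)}\leq\omega^{(R)}\leq\omega$ and conclude $|C_\Lambda(x)|\leq|C(x)|$ pointwise. Then $\mathbb E|C_\Lambda(x)|^r\leq\mathbb E|C(x)|^r=M_r(\beta,J)$, which is \eqref{eq:finite-moment-comparison}.

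\textbf{Main obstacle.} The delicate point is justifying this coupling cleanly. The monotonicity of free FK measures under enlargement must hold in the strong sense of stochastic domination, and the almost-sure limit must have law $\phi^{\mathrm f}_{\beta,2}$. Both ingredients were already set up in the proof of \cref{prop:critical-equivalence} via FKG and Strassen's theorem, with the chain started at $\Lambda$ rather than $B_0$. Any finite $\Lambda$ can be inserted at the start of an increasing exhausting sequence, whose limit is the same measure, so the argument goes through.
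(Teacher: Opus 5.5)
Your proposal is correct. Its skeleton matches the paper's: finiteness of $M_r(\beta,J)$ comes from the exponential tail \eqref{eq:hutchcroft-tail} via the tail-sum identity. Domain monotonicity comes from FKG after conditioning the edges of $E_{B_R}\setminus E_\Lambda$ to be closed. Translation invariance of $\phi^{\mathrm f}_{\beta,2}$ handles the base point.

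The only real difference is how you pass to infinite volume. You treat the non-locality of $\{|C(x)|\geq m\}$ as the main obstacle and resolve it with the almost-sure monotone coupling $\omega^{(R)}\uparrow\omega$. That is legitimate, because the paper constructs exactly this coupling in the proof of \cref{prop:critical-equivalence}. In your single-chain version it even gives $|C_\Lambda(x)|\leq|C(x)|$ pointwise, so the moment bound follows without the tail sum.

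The paper sidesteps the obstacle. It works with the cylinder event $A_{\Lambda,x,m}=\{|C^\omega_\Lambda(x)|\geq m\}$, in which the cluster is built only from edges of $E_\Lambda$. This event is increasing and depends on finitely many edges. So the chain $\phi^{\mathrm f}_{\Lambda,\beta,2}(A_{\Lambda,x,m})\leq\phi^{\mathrm f}_{B_R,\beta,2}(A_{\Lambda,x,m})\to\phi^{\mathrm f}_{\beta,2}(A_{\Lambda,x,m})$ needs only convergence of finite-dimensional marginals. The comparison with the full cluster is then the trivial inclusion $A_{\Lambda,x,m}\subseteq\{|C(x)|\geq m\}$ under the single measure $\phi^{\mathrm f}_{\beta,2}$.

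So the paper's route needs fewer ingredients: no coupling and no identification of the law of an almost-sure limit. Yours gives an explicit pathwise domination. Both yield the same tail comparison, which is all that is needed.
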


\begin{proof}[Proof of \cref{prop:cluster-moments}]
If $J\equiv0$, then $|C_\Lambda(x)|=1$ and $M_r(\beta,J)=1$,
so \eqref{eq:finite-moment-comparison} holds.
Assume $J\not\equiv0$.
Fix a nonempty finite set $\Lambda\subset\mathbb Z^d$ and
$x\in\Lambda$.
We first prove that, for every integer $m\geq1$,
\begin{equation}
\label{eq:finite-tail-comparison}
\phi^{\mathrm f}_{\Lambda,\beta,2}(|C_\Lambda(x)|\geq m)
\leq\phi^{\mathrm f}_{\beta,2}(|C(0)|\geq m).
\end{equation}

For $\omega\in\{0,1\}^{E_J}$, let $C_\Lambda^\omega(x)$
be the cluster of $x$ formed using only the open edges in
$E_\Lambda$.
Define
\begin{equation}
\label{eq:finite-cluster-cylinder-event}
A_{\Lambda,x,m}
:=\{\omega:|C_\Lambda^\omega(x)|\geq m\}.
\end{equation}
Opening additional edges cannot decrease $|C_\Lambda^\omega(x)|$,
so $A_{\Lambda,x,m}$ is an increasing event.
It depends only on the finitely many edge variables in $E_\Lambda$.
For a finite set $\Delta$, extend each configuration on $E_\Delta$
to $E_J$ by setting $\omega_e=0$ for $e\notin E_\Delta$.

For every $R$ with $\Lambda\subseteq B_R$, the domain
monotonicity established in the proof of
\cref{prop:critical-equivalence} gives the inequality below in
\eqref{eq:finite-domain-monotonicity}.
The joint law of the edge variables in $E_\Lambda$ under
$\phi^{\mathrm f}_{B_R,\beta,2}$ converges to their joint law
under $\phi^{\mathrm f}_{\beta,2}$.
Since $A_{\Lambda,x,m}$ depends only on these finitely many
edge variables, its probability converges as follows:
\begin{equation}
\label{eq:finite-domain-monotonicity}
\phi^{\mathrm f}_{\Lambda,\beta,2}(A_{\Lambda,x,m})
\leq\phi^{\mathrm f}_{B_R,\beta,2}(A_{\Lambda,x,m})
\xrightarrow[R\to\infty]{}
\phi^{\mathrm f}_{\beta,2}(A_{\Lambda,x,m}).
\end{equation}

The cluster $C_\Lambda^\omega(x)$ is contained in the full
cluster $C(x)$, so
\begin{equation}
\label{eq:cylinder-event-inclusion}
A_{\Lambda,x,m}\subseteq\{|C(x)|\geq m\}.
\end{equation}
Translations preserve the interaction strengths $J(x-y)$.
The free FK limit is independent of the increasing sequence
of finite sets used to construct it.
Hence $\phi^{\mathrm f}_{\beta,2}$ is translation invariant.
Combining \eqref{eq:finite-domain-monotonicity} and
\eqref{eq:cylinder-event-inclusion}, we obtain
\begin{align*}
\phi^{\mathrm f}_{\Lambda,\beta,2}(|C_\Lambda(x)|\geq m)
&=\phi^{\mathrm f}_{\Lambda,\beta,2}(A_{\Lambda,x,m})\\
&\leq\phi^{\mathrm f}_{\beta,2}(A_{\Lambda,x,m})\\
&\leq\phi^{\mathrm f}_{\beta,2}(|C(x)|\geq m)\\
&=\phi^{\mathrm f}_{\beta,2}(|C(0)|\geq m).
\end{align*}
This proves \eqref{eq:finite-tail-comparison}.

For every real $r\geq1$ and every random variable $Y$ taking
values in $\{0,1,2,\ldots\}\cup\{\infty\}$,
\begin{equation}
\label{eq:tail-sum-identity}
Y^r=\sum_{m=1}^{\infty}
\bigl(m^r-(m-1)^r\bigr)\mathbf1_{\{Y\geq m\}}.
\end{equation}
When $Y=k<\infty$, the sum equals
$\sum_{m=1}^k(m^r-(m-1)^r)=k^r$.
When $Y=\infty$, the first $k$ terms sum to $k^r$, which
tends to infinity as $k\to\infty$.
Taking expectations in \eqref{eq:tail-sum-identity} by
Tonelli's theorem and applying \eqref{eq:finite-tail-comparison}
gives
\begin{align*}
\mathbb E|C_\Lambda(x)|^r
&=\sum_{m=1}^{\infty}\bigl(m^r-(m-1)^r\bigr)
\phi^{\mathrm f}_{\Lambda,\beta,2}(|C_\Lambda(x)|\geq m)\\
&\leq\sum_{m=1}^{\infty}\bigl(m^r-(m-1)^r\bigr)
\phi^{\mathrm f}_{\beta,2}(|C(0)|\geq m)\\
&=M_r(\beta,J).
\end{align*}
The expectation is taken under $\phi^{\mathrm f}_{\Lambda,\beta,2}$.

To prove $M_r(\beta,J)<\infty$, use
$m^r-(m-1)^r\leq r m^{r-1}$ for $m\geq1$ and
the exponential bound \eqref{eq:hutchcroft-tail} in
\eqref{eq:tail-sum-identity}:
\[
M_r(\beta,J)
\leq A_{\beta,J}r\sum_{m=1}^{\infty}
m^{r-1}e^{-a_{\beta,J}m}<\infty.
\]
This proves \eqref{eq:finite-moment-comparison} for every real
$r\geq1$.
\end{proof}

\begin{proof}[Proof of \cref{thm:free-long-range-new}]
Fix $0\leq\beta<\beta_{\mathrm c}(J)$ and a nonempty finite set
$\Lambda\subset\mathbb Z^d$.
For the total spin under $\mu^{\mathrm f}_{\Lambda,\beta}$,
take $a=\mathbf1$ on $\Lambda$.
Then $T_a=S_\Lambda$, $s_a=s_\Lambda$, $A=1$, $H=\Lambda$,
and $M=|\Lambda|$.
By \cref{prop:cluster-moments}, for every $x\in\Lambda$,
\[
\mathbb E|C_\Lambda(x)|^2\leq M_2(\beta,J),
\qquad
\mathbb E|C_\Lambda(x)|^3\leq M_3(\beta,J).
\]
All cluster expectations in this proof are taken under
$\phi^{\mathrm f}_{\Lambda,\beta,2}$.
The constants $M_2(\beta,J)$ and $M_3(\beta,J)$ are finite
and depend only on $\beta$ and $J$.

By \eqref{eq:free-cluster-moment-definition} and
\eqref{eq:infinite-free-ES-identity},
$M_1(\beta,J)=\chi_J(\beta)$.
Using \eqref{eq:FG-basic}, \eqref{eq:cluster-sum-identity},
and \eqref{eq:finite-moment-comparison}, we obtain
\begin{equation}
\label{eq:free-variance-bounds}
|\Lambda|\leq s_\Lambda^2
=\sum_{x\in\Lambda}\mathbb E|C_\Lambda(x)|
\leq |\Lambda|M_1(\beta,J)
=|\Lambda|\chi_J(\beta).
\end{equation}
We can therefore apply
\cref{cor:cluster-moment-normal-approximation} with
\[
b_2=M_2(\beta,J),
\qquad
b_3=M_3(\beta,J),
\qquad
c=1.
\]
This proves \eqref{eq:free-long-range-BE-new} and
\eqref{eq:free-long-range-W-new}, with a constant depending
only on $\beta$ and $J$.
\end{proof}

\begin{proof}[Proof of \cref{rem:kolmogorov-rate}]
Fix $0\leq\beta<\beta_{\mathrm c}(J)$ and a nonempty finite set
$\Lambda\subset\mathbb Z^d$.
All probabilities and expectations in this proof are taken
under $\mu^{\mathrm f}_{\Lambda,\beta}$.
By \eqref{eq:free-variance-bounds},
\[
0<|\Lambda|\leq s_\Lambda^2\leq |\Lambda|\chi_J(\beta),
\qquad \chi_J(\beta)<\infty.
\]
Since $\mathbb E S_\Lambda=0$, Chebyshev's inequality gives
\[
\mathbb P\bigl(|S_\Lambda|\leq2s_\Lambda\bigr)\geq\frac34.
\]
The total spin takes values in $\{-|\Lambda|,-|\Lambda|+2,\ldots,|\Lambda|\}$.
Thus the interval $[-2s_\Lambda,2s_\Lambda]$ contains at most
$2s_\Lambda+1$ possible values, and hence
\[
p_\Lambda
:=\max_{t\in\mathbb R}\mathbb P(S_\Lambda=t)
\geq\frac{3}{4(2s_\Lambda+1)}.
\]
Choose $u_\Lambda$ such that
$\mathbb P(W_\Lambda=u_\Lambda)=p_\Lambda$.
The continuity of $\Phi$ gives
\begin{align*}
p_\Lambda
&=\mathbb P(W_\Lambda\leq u_\Lambda)
 -\mathbb P(W_\Lambda<u_\Lambda)\\
&\leq
\bigl|\mathbb P(W_\Lambda\leq u_\Lambda)-\Phi(u_\Lambda)\bigr|
+\bigl|\mathbb P(W_\Lambda<u_\Lambda)-\Phi(u_\Lambda)\bigr|\\
&\leq2d_{\mathrm K}(W_\Lambda,G).
\end{align*}
Therefore
\[
d_{\mathrm K}(W_\Lambda,G)
\geq\frac{p_\Lambda}{2}
\geq\frac{3}{8(2s_\Lambda+1)}.
\]
Finally, $s_\Lambda\leq\sqrt{|\Lambda|\chi_J(\beta)}$ and $|\Lambda|\geq1$ imply
\[
d_{\mathrm K}(W_\Lambda,G)
\geq
\frac{3}{8\bigl(2\sqrt{\chi_J(\beta)}+1\bigr)}\,|\Lambda|^{-1/2}.
\]
This proves \eqref{eq:free-K-lower-bound}.
\end{proof}

\subsection{Periodized model}
\label{sec:periodic-models}

Let $J$ satisfy \eqref{eq:intro-summable-J}.
For a bounded function $F$ of finitely many spins, write
$\langle F\rangle_\beta^{\mathrm f}$ for its expectation under
the infinite-volume free-boundary Gibbs measure $\mu^\infty_\beta$
with interaction $J$.

\subsubsection{Reduction to uniform torus estimates}
\label{subsubsec:periodic-reduction}

We apply \cref{cor:cluster-moment-normal-approximation} to the
periodized model in \eqref{eq:periodic-measure-new}.
For an integer $L\geq1$, define
\begin{equation}
\label{eq:periodic-FK-edge-set}
E_L:=\bigl\{\{\bar x,\bar y\}:\bar x,\bar y\in\mathbb T_L^d,
\ \bar x\ne\bar y,\ J_L(\bar x-\bar y)>0\bigr\}.
\end{equation}
For $\beta\geq0$, the Gibbs measure
$\mu^{\mathrm{per}}_{L,\beta}$ is the finite-graph Ising measure
in \eqref{eq:finite-ising-new} with
\[
\mathsf V=\mathbb T_L^d,
\qquad
\mathsf E=E_L,
\qquad
K_{\bar x\bar y}=\beta J_L(\bar x-\bar y).
\]
The FK edge parameters are
\begin{equation}
\label{eq:periodic-FK-edge-parameters}
p_{\bar x\bar y}:=1-e^{-2\beta J_L(\bar x-\bar y)},
\qquad \{\bar x,\bar y\}\in E_L.
\end{equation}
Write
\begin{equation}
\label{eq:periodic-FK-measure}
\phi^{\mathrm{per}}_{L,\beta,2}
:=\phi_{\mathbb T_L^d,K,2}
\end{equation}
for the FK measure defined in
\eqref{eq:finite-random-cluster-measure}.
For $\bar x\in\mathbb T_L^d$, let $C_L(\bar x)$ be
the open cluster containing $\bar x$.
For a function $F$ of the torus spins, write
$\langle F\rangle_{L,\beta}^{\mathrm{per}}$ for its expectation
under $\mu^{\mathrm{per}}_{L,\beta}$.

By \cref{prop:ES-identities}, the spin and edge marginals of
\eqref{eq:finite-ES-joint-law} are
$\mu^{\mathrm{per}}_{L,\beta}$ and
$\phi^{\mathrm{per}}_{L,\beta,2}$.
Spin-flip symmetry, translation invariance, and
\eqref{eq:two-point-connectivity} give
\begin{equation}
\label{eq:periodic-susceptibility-FK}
\begin{aligned}
\frac{s_L^2}{L^{d}}
&=\sum_{\bar x\in\mathbb T_L^d}
\bigl\langle\sigma_{\bar0}\sigma_{\bar x}\bigr\rangle_{L,\beta}^{\mathrm{per}}\\
&=\sum_{\bar x\in\mathbb T_L^d}
\phi^{\mathrm{per}}_{L,\beta,2}(\bar0\leftrightarrow\bar x)
=\mathbb E|C_L(\bar0)|.
\end{aligned}
\end{equation}
The expectation in \eqref{eq:periodic-susceptibility-FK}
is taken under $\phi^{\mathrm{per}}_{L,\beta,2}$.

The following proposition compares the torus two-point function
with a sum of lattice two-point functions.

\begin{proposition}
\label{prop:torus-to-lattice-comparison}
Fix $0\leq\beta<\beta_{\mathrm c}(J)$.
For every integer $L\geq1$ and every
$\bar x\in\mathbb T_L^d$ with representative $x\in\mathbb Z^d$,
\begin{equation}
\label{eq:torus-two-point-comparison}
\bigl\langle\sigma_{\bar0}\sigma_{\bar x}\bigr\rangle_{L,\beta}^{\mathrm{per}}
\leq
\sum_{u\in\mathbb Z^d}
\bigl\langle\sigma_0\sigma_{x+Lu}\bigr\rangle_{\beta}^{\mathrm f}.
\end{equation}
Consequently,
\begin{equation}
\label{eq:periodic-susceptibility-new}
\frac{s_L^2}{L^{d}}\leq\chi_J(\beta).
\end{equation}
\end{proposition}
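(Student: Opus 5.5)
The second assertion \eqref{eq:periodic-susceptibility-new} will follow from \eqref{eq:torus-two-point-comparison}. By \eqref{eq:periodic-susceptibility-FK}, $s_L^2/L^d=\sum_{\bar x}\langle\sigma_{\bar0}\sigma_{\bar x}\rangle^{\mathrm{per}}_{L,\beta}$. Choosing one representative $x$ for each $\bar x\in\mathbb T_L^d$, the map $(x,u)\mapsto x+Lu$ is a bijection onto $\mathbb Z^d$. Hence summing \eqref{eq:torus-two-point-comparison} over $\bar x$ gives $\sum_{y\in\mathbb Z^d}\langle\sigma_0\sigma_y\rangle^{\mathrm f}_\beta=\chi_J(\beta)$, using Tonelli. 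So all the work is in the pointwise comparison.

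For \eqref{eq:torus-two-point-comparison}, I would first reduce to a finite-volume lattice statement. By \eqref{eq:intro-infinite-volume} and monotone convergence of $\langle\sigma_0\sigma_y\rangle^{\mathrm f}_{B_R,\beta}$ in $R$ (Griffiths' second inequality), it suffices to fix a finite set $F\subset\mathbb Z^d$ of translates $u$ and to bound the torus correlation by $\sum_{u}\langle\sigma_0\sigma_{x+Lu}\rangle^{\mathrm f}_{\beta}$ up to an error that vanishes as $F\uparrow\mathbb Z^d$. To do this I would truncate the periodization $J_L$ in \eqref{eq:periodized-J-new} to $J_L^{(N)}(\bar z):=\sum_{|u|\leq N}J(z+Lu)$. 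The torus two-point function is continuous in finitely many couplings and nondecreasing in each of them by Griffiths. It is therefore the increasing limit of the truncated torus correlations, and it suffices to prove the bound for each truncated model.

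The core step is a lifting argument through the random-current (backbone) representation. Write $\langle\sigma_{\bar0}\sigma_{\bar x}\rangle$ as a ratio $Z^{\{\bar0,\bar x\}}/Z^{\varnothing}$ of current partition functions on the torus multigraph. In this multigraph, each torus edge $\{\bar z,\bar w\}$ is split into parallel edges indexed by the lattice displacements $z-w+Lu$, with couplings $\beta J(z-w+Lu)$. This splitting leaves the Ising measure unchanged, since the couplings add. I would then use the backbone decomposition $\langle\sigma_{\bar0}\sigma_{\bar x}\rangle=\sum_\gamma\rho(\gamma)$ over self-avoiding paths $\gamma$ from $\bar0$ to $\bar x$ in the multigraph. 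Each such $\gamma$ records a lattice displacement at every step, so it lifts uniquely to a lattice path $\tilde\gamma$ from $0$ to some $x+Lu$. The standard backbone inequality gives $\rho(\gamma)\leq\prod_{e\in\gamma}\tanh(K_e)\,\langle\cdot\rangle$-type factors in which the edges adjacent to the earlier part of $\gamma$ are removed. The goal is to show $\rho_{\mathrm{torus}}(\gamma)\leq\rho_{\mathrm{lattice}}(\tilde\gamma)$, and then to sum over $\gamma$ grouped by endpoint $x+Lu$. The identity $\sum_{\tilde\gamma:0\to y}\rho_{\mathrm{lattice}}(\tilde\gamma)=\langle\sigma_0\sigma_y\rangle^{\mathrm f}$ would then give the claim.

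I expect the main obstacle to be this weight comparison. The torus backbone weight involves ratios of partition functions on the torus with the explored edges deleted, and these must be compared with the analogous lattice ratios. A self-avoiding torus path can also lift to a lattice path that is not self-avoiding only in trivial ways, and that case needs to be checked. My first attempt would be the ``one-step'' recursive form of the backbone bound: $\langle\sigma_a\sigma_b\rangle_{G}\leq\sum_{e=\{a,c\}}\tanh(K_e)\langle\sigma_c\sigma_b\rangle_{G\setminus a}$, a Simon--Lieb/random-current inequality. I would iterate it along the walk on the torus and, in parallel, on the lattice, keeping the lifted positions. After finitely many steps the remainder is bounded by $1$ on the torus side and is summable using $\chi_J(\beta)<\infty$ on the lattice side.

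If exact equality of step weights on the two sides fails, I would fall back on proving the domination directly at the level of FK clusters. Here I would explore the torus cluster of $\bar0$ while lifting each opened parallel edge to the cover, and argue that the lifted exploration is stochastically dominated by the lattice free FK exploration from Proposition~\ref{prop:critical-equivalence}. This comparison needs care because of the $2^{k(\omega)}$ factor when a torus edge closes a loop whose lift does not.
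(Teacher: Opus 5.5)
\textbf{There is a genuine gap: the pointwise inequality \eqref{eq:torus-two-point-comparison} for the truncated model, which is the whole content of the statement, is not proved.}

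Your outer reductions are sound and match the paper. Summing over a fundamental domain with Tonelli gives \eqref{eq:periodic-susceptibility-new}. Passing to finitely many parallel edges and then to the limit, by continuity or Griffiths monotonicity of a finite-torus correlation, is also how the paper proceeds. The paper truncates $J$ itself to the finite-range $J^{(R)}$, which avoids the dependence of your $\sum_{|u|\leq N}J(z+Lu)$ on the chosen representative.

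Your backbone route needs $\rho_{\mathrm{torus}}(\gamma)\leq\rho_{\mathrm{lattice}}(\tilde\gamma)$ path by path, and nothing you propose gives this. A backbone weight is $\prod_{e\in\gamma}\tanh K_e$ times a ratio of partition functions of the whole graph with the explored edges deleted. You give no way to compare these ratios on the torus and on the cover. The deleted edge sets do not even correspond under lifting: a torus edge between two vertices $\bar v_i,\bar v_j$ of $\gamma$ lifts at $v_i$ to an edge whose other endpoint is in general a different lift of $\bar v_j$, off $\tilde\gamma$.

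Your proposed remedy cannot work. Iterating the one-step Simon--Lieb-type bound on the torus only gives $\langle\sigma_{\bar0}\sigma_{\bar x}\rangle^{\mathrm{per}}_{L,\beta}\leq\sum_\gamma\prod_{e\in\gamma}\tanh K_e$, a self-avoiding-walk series. The lattice two-point function is itself bounded \emph{above} by such series, never below, so no inequality with $\sum_u\langle\sigma_0\sigma_{x+Lu}\rangle^{\mathrm f}_\beta$ follows. The series is also controlled uniformly in $L$ only at high temperature, not throughout $\beta<\beta_{\mathrm c}(J)$. To end with lattice two-point functions you need the lattice side as an identity or a lower bound, and running the same upper-bound iteration ``in parallel'' never produces one. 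The FK fallback stops exactly where the $2^{k(\omega)}$ factor blocks a naive lifted stochastic domination, so it is not yet an argument.

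The missing idea is a switching argument between the two graphs, done at the level of current sums rather than path by path.
\begin{itemize}
\item The paper writes $Z_B^\varnothing Z_H^{\{\bar0,\bar x\}}$ as a sum over pairs $(m,n)$. Here $m$ is a sourceless current on a finite box $B$ of the labeled cover, and $n$ is a torus current with $\partial n=\{\bar0,\bar x\}$.
\item It explores $m+\widehat n$ from $0$ in the cover, keeping at most one lift of each torus vertex and querying at most one lift of each torus edge. The explored graph is then isomorphic to its projection. By property (X4) of \cref{lem:cover-exploration-interfaces}, it contains exactly one lift $y$ of $\bar x$.
\item Conditionally on the exploration output, $n$ is transported to the explored graph and the ordinary switching lemma is applied there. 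This moves the sources to $\partial m=\{0,y\}$, $\partial n=\varnothing$ (\cref{lem:mixed-switching}).
\item Dropping the exploration indicator gives $Z_B^\varnothing Z_H^{\{\bar0,\bar x\}}\leq\sum_{y}Z_B^{\{0,y\}}Z_H^\varnothing$, which is \eqref{eq:finite-box-unfolding}.
\item Deleting the couplings with displacement in $L\mathbb Z^d$ and applying free-boundary domain monotonicity then gives the lattice two-point functions.
\end{itemize}
Your proposal lacks a device of this kind, one that couples a lattice current with a torus current so that switching can transfer the sources from one graph to the other.
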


To control the second and third cluster-size moments, we use
the following exponential tail bound.

\begin{proposition}[Uniform torus cluster-size tail]
\label{prop:periodic-cluster-tail}
Fix $0\leq\beta<\beta_0<\beta_{\mathrm c}(J)$.
There exist constants $A_0,a_0>0$, depending only on
$\beta,\beta_0,J$, such that, for all integers $L,m\geq1$,
\begin{equation}
\label{eq:periodic-volume-tail-new}
\phi^{\mathrm{per}}_{L,\beta,2}
\bigl(|C_L(\bar0)|\geq m\bigr)
\leq A_0e^{-a_0m}.
\end{equation}
\end{proposition}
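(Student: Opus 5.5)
The plan is to run Hutchcroft's sharpness argument directly on the finite transitive weighted graph $(\mathbb T_L^d,E_L)$, with edge weights $2J_L$. The only input that must be uniform in $L$ is a susceptibility bound at the larger parameter $\beta_0$, and \cref{prop:torus-to-lattice-comparison} supplies it. Applied at $\beta_0<\beta_{\mathrm c}(J)$, that proposition together with \eqref{eq:periodic-susceptibility-FK} gives
\[
\mathbb E_{\phi^{\mathrm{per}}_{L,\beta_0,2}}|C_L(\bar0)|=\frac{s_L^2(\beta_0)}{L^d}\leq\chi_J(\beta_0)<\infty
\qquad\text{for every }L\geq1 .
\]
Markov's inequality then yields the polynomial tail $\phi^{\mathrm{per}}_{L,\beta_0,2}(|C_L(\bar0)|\geq m)\leq\chi_J(\beta_0)/m$, uniformly in $L$. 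The torus graph is vertex-transitive, because translations of $\mathbb T_L^d$ preserve $J_L$. Its total weight at each vertex is $\sum_{\bar y}2J_L(\bar y)=2\|J\|_1$, which is independent of $L$. Hence every constant produced by Hutchcroft's finite-graph inequalities depends only on $\beta$, $\beta_0$, $\|J\|_1$ and $\chi_J(\beta_0)$.

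The second step is to convert this uniform polynomial tail at $\beta_0$ into an exponential tail at $\beta<\beta_0$. Following \citet{Hutchcroft2020}, I would introduce a ghost vertex $g$ joined to every torus site with parameter $1-e^{-h}$, and work with the magnetization $M_L(\beta,h)=\phi^{\mathrm{per}}_{L,\beta,h,2}(\bar0\leftrightarrow g)$. Hutchcroft's proof of his Theorem~1.5 rests on differential inequalities relating $\partial_\beta M$ and $\partial_h M$. These are proved on finite weighted transitive graphs via the OSSS inequality and a two-ghost argument, with constants depending only on $q=2$ and the vertex weight. The cited statement is for infinite graphs, so I would re-derive these inequalities for the torus. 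I would then integrate them along the path from $(\beta_0,h)$ down to $(\beta,h)$. The boundary value at $\beta_0$ is controlled by $M_L(\beta_0,h)\leq h\,\chi_J(\beta_0)$, which follows from the comparison $M\lesssim\mathbb E[1-e^{-h|C|}]$. The expected outcome is a bound of the form
\[
\phi^{\mathrm{per}}_{L,\beta,2}\bigl(|C_L(\bar0)|\geq m\bigr)\leq A_0\,e^{-a_0m},
\]
with $a_0$ of order $(\beta_0-\beta)/(\beta_0\,\|J\|_1\,\chi_J(\beta_0))$ or a similar expression. If the ghost-field bookkeeping proves awkward, my fallback is Hutchcroft's cleaner consequence $\phi_\beta(|C|\geq n)\leq \exp\bigl[-c(1-\beta/\beta_0)\,n/\mathbb E_{\beta_0}|C|\bigr]$-type, again checking that it holds verbatim on finite transitive graphs.

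I expect the main obstacle to be confirming that every step of Hutchcroft's proof uses only finiteness of the vertex weight, transitivity and positive association, and never infinite volume. In particular, the OSSS/randomized-algorithm step must not rely on the absence of wrap-around paths. A second concern is that, on the torus, $|C_L(\bar0)|\leq L^d$ is bounded, so the exponential bound is trivial for $m>L^d$. The constants must therefore be shown to carry no hidden $L$-dependence, for example through a normalization by $|\mathsf V|$ in an averaging step. Transitivity should take care of this, since averages over starting vertices equal single-vertex quantities. Finally, I would record that $A_0$ and $a_0$ depend only on $\beta,\beta_0,J$, through $\|J\|_1$ and $\chi_J(\beta_0)$.
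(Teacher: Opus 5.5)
Your outline matches the paper's proof. The torus-to-lattice comparison gives $\chi_{\mathsf G_L}(\beta_0)=\mathbb E|C_L(\bar0)|\le\chi_J(\beta_0)$ under $\phi^{\mathrm{per}}_{L,\beta_0,2}$, uniformly in $L$. This is fed into a finite-graph version of Hutchcroft's volume-tail inequality. The only other constant there is $\mathfrak C_{\mathsf G_L}(\beta_0)=\max_e(e^{\beta_0\mathcal J_e}-1)/\mathcal J_e\in[\beta_0,\beta_0e^{2\beta_0\|J\|_1}]$. What you call the fallback is exactly what the paper proves (\cref{lem:integrated-tail}), and it should be your proof.

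The route you put first is not how Hutchcroft's argument goes, and as sketched it has a gap. Two-parameter inequalities between $\partial_\beta M$ and $\partial_h M$ (Aizenman--Barsky or Duminil-Copin--Raoufi--Tassion type) can be integrated from $(\beta_0,h)$ down to $(\beta,h)$ starting from $M\le h\chi_J(\beta_0)$. At best this reproduces bounds like $M(\beta,h)\lesssim h$, i.e.\ information at the level of $\mathbb E|C|$, which you already get from $\chi_J(\beta_0)$ by monotonicity in $\beta$. You give no mechanism for extracting $\phi^{\mathrm{per}}_{L,\beta,2}(|C_L(\bar0)|\ge m)\le A_0e^{-a_0m}$ from them. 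That would amount to controlling $\mathbb E e^{\epsilon|C|}$, a negative-field statement, so your ``expected outcome'' is unsupported. A side point: with a genuine FK ghost vertex the field tilts the edge marginal upward. So $M\le h\chi$ is true at $q=2$, but via GHS rather than by comparison with $\mathbb E[1-e^{-h|C|}]$.

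The one-parameter argument you need to verify on the torus has three steps.
\begin{enumerate}
\item For increasing $F$,
\[
\frac{d}{dt}\mathbb EF=\sum_e\frac{\mathcal J_e\,e^{t\mathcal J_e}}{e^{t\mathcal J_e}-1}\operatorname{Cov}(F,\omega_e)\ge\mathfrak C_{\mathsf G}(t)^{-1}\sum_e\operatorname{Cov}(F,\omega_e).
\]
\item Apply OSSS, for $\phi$ times an independent ghost field of intensity $1-e^{-\lambda/n}$, to $\operatorname{Cov}(F,\mathbf 1_B)$ with $F=\mathbf1_{\{|C(v)|\ge n\}}$, not to a variance. Since $F$ ignores the ghost field, $\operatorname{Cov}(F,\eta_u)=0$ and no $\partial_h$ term survives. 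Transitivity bounds every edge revealment by $2M_\lambda$. Combining this with $\operatorname{Cov}(F,\mathbf1_B)\ge(1-e^{-\lambda}-M_\lambda)\phi(F)$ and letting $\lambda\downarrow0$ gives
\[
\mathfrak C_{\mathsf G}(t)\,\frac{d}{dt}\log\phi_t(|C(v)|\ge n)\ge\tfrac12\bigl(n/\chi_{\mathsf G}(t)-1\bigr).
\]
\item Integrate over $[\beta,\beta_0]$ using monotonicity of $\chi_{\mathsf G}$ and $\mathfrak C_{\mathsf G}$, treating $n\le\chi_{\mathsf G}(\beta_0)$ trivially. That case forces the prefactor $A_0=e^{(\beta_0-\beta)/(2\beta_0)}$, whose uniformity in $L$ uses $\mathfrak C_{\mathsf G_L}(\beta_0)\ge\beta_0$. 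Your fallback display needs such a prefactor, since its left side equals $1$ at $n=1$.
\end{enumerate}
None of this involves wrap-around or $|\mathsf V|$, so those worries are unfounded.

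Two small corrections. The vertex weight is $2\sum_{z\notin L\mathbb Z^d}J(z)\le2\|J\|_1$, not equal to it, and only the largest single-edge weight enters, through $\mathfrak C$. The degenerate cases $E_L=\varnothing$ (e.g.\ $L=1$) and $\beta=0$ need a separate but trivial check.
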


\begin{proof}[Proof of \cref{thm:periodic-long-range-new}]
Fix $0\leq\beta<\beta_{\mathrm c}(J)$ and choose
$\beta_0\in(\beta,\beta_{\mathrm c}(J))$, depending only on
$\beta$ and $J$.
For every integer $L\geq1$ and every real $r\geq1$,
\eqref{eq:tail-sum-identity} and
\eqref{eq:periodic-volume-tail-new} give
\begin{align*}
\mathbb E|C_L(\bar0)|^r
&=\sum_{m=1}^{\infty}\bigl(m^r-(m-1)^r\bigr)
\phi^{\mathrm{per}}_{L,\beta,2}
\bigl(|C_L(\bar0)|\geq m\bigr)\\
&\leq A_0r\sum_{m=1}^{\infty}m^{r-1}e^{-a_0m}<\infty.
\end{align*}
All cluster expectations in this proof are taken under
$\phi^{\mathrm{per}}_{L,\beta,2}$.
Since $A_0$ and $a_0$ are independent of $L$, we obtain
\begin{equation}
\label{eq:periodic-uniform-moments}
\mathcal M_r^{\mathrm{per}}
:=\sup_{L\geq1}\mathbb E|C_L(\bar0)|^r<\infty,
\qquad r\geq1.
\end{equation}
The quantities $\mathcal M_r^{\mathrm{per}}$ depend only on
$r$, $\beta$, and $J$.
By translation invariance of $\phi^{\mathrm{per}}_{L,\beta,2}$,
for every $\bar x\in\mathbb T_L^d$ and $r=2,3$,
\[
\mathbb E|C_L(\bar x)|^r
=\mathbb E|C_L(\bar0)|^r
\leq\mathcal M_r^{\mathrm{per}}.
\]

Fix an integer $L\geq1$.
For the total spin under $\mu^{\mathrm{per}}_{L,\beta}$,
take $a=\mathbf1$ on $\mathbb T_L^d$.
Then $T_a=S_L$, $s_a=s_L$, $A=1$, $H=\mathbb T_L^d$,
and $M=L^{d}$.
By \eqref{eq:FG-basic} and
\eqref{eq:periodic-susceptibility-new},
\[
L^{d}\leq s_L^2\leq L^{d}\chi_J(\beta).
\]
Applying \cref{cor:cluster-moment-normal-approximation} with
\[
b_2=\mathcal M_2^{\mathrm{per}},
\qquad
b_3=\mathcal M_3^{\mathrm{per}},
\qquad
c=1
\]
gives both bounds in \eqref{eq:periodic-BE-new} and
\eqref{eq:periodic-W-new}.
The constant depends only on $\beta$ and $J$.
\end{proof}

We prove \cref{prop:torus-to-lattice-comparison} in
\cref{subsubsec:periodic-comparison} and
\cref{prop:periodic-cluster-tail} in
\cref{subsubsec:periodic-tail}.

\subsubsection{Torus-to-lattice comparison}
\label{subsubsec:periodic-comparison}

We first apply \cref{prop:finite-range-unfolding} to finite-range
truncations of $J$.
We then let the truncation range tend to infinity to prove
\cref{prop:torus-to-lattice-comparison}.

For $z=(z_1,\ldots,z_d)\in\mathbb Z^d$, write
$\|z\|_\infty:=\max_{1\leq i\leq d}|z_i|$.
For $R\geq1$, define the truncated interaction and its
periodization by
\begin{align}
J^{(R)}(z)
&:=J(z)\mathbf1_{\{\|z\|_\infty\leq R\}},
\qquad z\in\mathbb Z^d,\notag\\
J_L^{(R)}(\bar z)
&:=\sum_{u\in\mathbb Z^d}J^{(R)}(z+Lu),
\qquad \bar z\in\mathbb T_L^d\setminus\{\bar0\}.
\label{eq:periodized-truncation}
\end{align}
In the definition of $J_L^{(R)}(\bar z)$, $z$ is any
representative of $\bar z$.
Write $\langle\cdot\rangle_{L,\beta}^{\mathrm{per},(R)}$
for expectation under the zero-field torus Gibbs measure in
\eqref{eq:periodic-measure-new}, with $J_L$ replaced by
$J_L^{(R)}$.
For $y\in\mathbb Z^d$, define the lattice two-point functions
\begin{equation}
\label{eq:truncated-two-point-functions}
\tau_\beta^{(R)}(y):=\mathbb E[\sigma_0\sigma_y],
\qquad
\tau_\beta(y):=\langle\sigma_0\sigma_y\rangle_\beta^{\mathrm f}.
\end{equation}
Here $\mathbb E$ is taken under the zero-field infinite-volume
free-boundary Gibbs measure with interaction $J^{(R)}$
and inverse temperature $\beta$.

\begin{proposition}[Finite-range unfolding]
\label{prop:finite-range-unfolding}
Let $\beta\geq0$ and $R\geq1$.
For every integer $L\geq1$ and every
$\bar x\in\mathbb T_L^d$ with representative $x\in\mathbb Z^d$,
\begin{equation}
\label{eq:finite-range-unfolding}
\bigl\langle\sigma_{\bar0}\sigma_{\bar x}\bigr\rangle_{L,\beta}^{\mathrm{per},(R)}
\leq\sum_{u\in\mathbb Z^d}\tau_\beta^{(R)}(x+Lu).
\end{equation}
\end{proposition}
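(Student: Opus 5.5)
The plan is to pass through the $N$-fold periodic cover of $\mathbb T_L^d$ and to use the random-current (or FK) path structure together with the covering map.

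For $N\geq1$, write $\pi_N:\mathbb T_{NL}^d\to\mathbb T_L^d$ for the projection. Because $J^{(R)}$ has finite range, for $N$ large the periodized couplings $J^{(R)}_{NL}$ on $\mathbb T_{NL}^d$ are the lifts of $J^{(R)}_L$ edge by edge. Precisely, each torus edge $\{\bar y,\bar z\}$ of $\mathbb T_L^d$ with coupling $J_L^{(R)}(\bar z-\bar y)=\sum_u J^{(R)}(z-y+Lu)$ splits into the finitely many lattice displacements $z-y+Lu$ with $\|z-y+Lu\|_\infty\leq R$. The first step is to prove the covering inequality
\[
\bigl\langle\sigma_{\bar0}\sigma_{\bar x}\bigr\rangle_{L,\beta}^{\mathrm{per},(R)}
\leq\sum_{\tilde x\in\pi_N^{-1}(\bar x)}
\bigl\langle\sigma_{\tilde 0}\sigma_{\tilde x}\bigr\rangle_{NL,\beta}^{\mathrm{per},(R)}.
\]
The second step is to let $N\to\infty$. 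For each fixed $u$, one needs $\langle\sigma_{\tilde0}\sigma_{x+Lu}\rangle^{\mathrm{per},(R)}_{NL,\beta}\to\tau^{(R)}_\beta(x+Lu)$, and then Fatou or dominated convergence handles the sum over $u$. If the sum on the right of \eqref{eq:finite-range-unfolding} is infinite, there is nothing to prove, so I may assume it is finite.

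For the covering inequality I will use the random-current representation. The quantity $\langle\sigma_{\bar0}\sigma_{\bar x}\rangle$ equals $Z^{\{\bar0,\bar x\}}/Z^{\varnothing}$, where the sums are over integer currents $\mathbf n$ on the edges, with the prescribed sources and weights $\prod_e K_e^{\mathbf n_e}/\mathbf n_e!$. I split each torus edge current into currents on its lattice displacements, using the multinomial identity $(\sum_j K_j)^n/n!=\sum_{n_1+\dots=n}\prod_j K_j^{n_j}/n_j!$. This gives a sum over currents on the multigraph whose edges are the lifted displacements. A sourced current $\mathbf n$ with $\partial\mathbf n=\{\bar0,\bar x\}$ contains a path from $\bar0$ to $\bar x$ (the backbone). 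I lift the backbone starting at $\tilde0$; it ends at a unique lift $\tilde x$, which partitions the numerator according to $\tilde x$. The remaining task is to compare, term by term in $\tilde x$, with $Z^{\{\tilde0,\tilde x\}}_{NL}/Z^{\varnothing}_{NL}$. To do this I use the backbone expansion with Griffiths-type monotonicity: removing the backbone edges and conditioning on the backbone being a fixed self-avoiding path $\gamma$ gives the weight $\prod_{e\in\gamma}\tanh$-type factors times a ratio $Z^{\varnothing}_{\gamma^{c}}/Z^{\varnothing}$. I then compare this ratio with the same object on the cover for the lifted path $\tilde\gamma$.

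The step I expect to be the main obstacle is this partition-function ratio comparison: the torus ratio must not exceed the cover ratio for the lifted path. Since $\gamma^c$ on the torus corresponds to deleting every lift of $\gamma$ on the cover, the cover ratio with only $\tilde\gamma$ deleted is larger. I plan to bound the discrepancy using the identity $Z^{\varnothing}_{\gamma^c}/Z^{\varnothing}=\prod$ of conditional "edge-closed" probabilities, together with Griffiths' second inequality, as in the Simon--Lieb and Aizenman backbone arguments. If this fails, my fallback is the FK picture: on the event $\bar0\leftrightarrow\bar x$, explore an open path on the torus, lift it step by step to the cover, and apply positive association to dominate each step's conditional opening probability by the cover's.

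The limit $N\to\infty$ should be routine by comparison. For fixed finite range $R$ and $\beta$ below the threshold of $J^{(R)}$ (which is at least $\beta_{\mathrm c}(J)$, by Griffiths monotonicity in the couplings), the periodic correlations on $\mathbb T_{NL}^d$ converge to the infinite-volume correlations. For larger $\beta$ with $R$ fixed, I will instead apply the inequality on the cover directly and take a subsequential limit, checking the free-versus-periodic identification separately. Finally, the dependence on $N$ in the covering inequality disappears because the bound holds uniformly in $N$.
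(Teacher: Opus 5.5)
Your first step, the covering inequality, is the gap: it carries the whole content of the proposition, and neither mechanism you propose proves it.

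\textbf{The backbone route.} You need $\rho_{\mathbb T_L}(\gamma)\leq\rho_{\mathbb T_{NL}}(\tilde\gamma)$ for each torus backbone $\gamma$ and its lift $\tilde\gamma$. Test the simplest case. For an edge $e$ with endpoints $a,b$ that is first in the order at $a$, summing odd $\mathbf n_e$ against source-free currents off $e$ gives
\[
\rho_G(\{e\})=\frac{\tanh K_e}{1+\tanh K_e\,\langle\sigma_a\sigma_b\rangle_{G\setminus e}}.
\]
For a one-edge backbone from $\bar0$ to $\bar x$, your requirement therefore becomes
\[
\langle\sigma_{\bar0}\sigma_{\bar x}\rangle_{\mathbb T_L\setminus e}
\geq
\langle\sigma_{\tilde0}\sigma_{\tilde x}\rangle_{\mathbb T_{NL}\setminus\tilde e}.
\]
This is a ``base dominates cover'' correlation inequality between two different graphs. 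Griffiths' inequalities only compare couplings on a single graph. There is also no identity linking $Z^\varnothing_{\mathbb T_L\setminus\gamma}/Z^\varnothing_{\mathbb T_L}$ to a cover ratio with $\pi_N^{-1}(\gamma)$ deleted, because the torus measure is neither a projection nor a conditioning of the cover measure. So the step you flag as the main obstacle is an unproved cover comparison of the same kind as the statement, in the opposite direction.

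\textbf{The FK fallback.} It fails as well. For $q=2$, the conditional probability that an edge is open is $p_e$ or $p_e/(2-p_e)$, according to whether its endpoints are joined off $e$. Positive association gives no comparison between conditional laws on two different graphs. Heuristically, the torus has more off-edge connections, so it is the torus that dominates, not the cover. Edge-by-edge lifting works for $q=1$, where edges are independent, but not for $q=2$.

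\textbf{The missing idea.} The paper uses double-current switching with a cover-aware exploration. It takes a sourceless current $m$ on a finite free box of the lattice cover and a current $n$ on the torus multigraph with $\partial n=\{\bar0,\bar x\}$. These are explored jointly through $\omega_e=\mathbf 1_{\{m(e)+n(\pi_L(e))>0\}}$. The exploration keeps at most one lift of each torus vertex and queries at most one lift of each torus edge. The explored graph is then isomorphic to its projection and contains exactly one lift $y$ of $\bar x$. Ordinary switching on that graph moves the sources from $n$ to $m$, giving
\[
Z_B^\varnothing Z_H^{\{\bar0,\bar x\}}\leq\sum_y Z_B^{\{0,y\}}Z_H^\varnothing .
\]
Because the cover is the lattice itself, restricted to $\Lambda_M$ with $M>(L^d+1)R$, free-boundary domain monotonicity gives $\langle\sigma_0\sigma_y\rangle_{\mathsf G_{M,R,L}}\leq\tau_\beta^{(R)}(y)$ for every $\beta\geq0$. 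No further limit is needed.

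\textbf{Your second step.} It is not routine either.
\begin{itemize}
\item Fatou gives $\sum_u\lim_N\leq\liminf_N\sum_u$, which is the wrong direction.
\item The fibre $\pi_N^{-1}(\bar x)$ has $N^d$ points, so you need a summable bound uniform in $N$, that is, uniform decay on $\mathbb T_{NL}^d$. You do not supply this.
\item Identifying periodic limits with the free function $\tau^{(R)}_\beta$ requires uniqueness of the Gibbs state. That is not available for all $\beta\geq0$ covered by the statement, so the plan of taking a subsequential limit for large $\beta$ is not yet an argument.
\end{itemize}
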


The sum in \eqref{eq:finite-range-unfolding} may be infinite.
The proof of \cref{prop:finite-range-unfolding} is given in
\cref{subsubsec:periodic-unfolding}.

\begin{proof}[Proof of \cref{prop:torus-to-lattice-comparison}]
Fix $0\leq\beta<\beta_{\mathrm c}(J)$, an integer $L\geq1$,
and $\bar x\in\mathbb T_L^d$ with representative $x\in\mathbb Z^d$.
The lattice correlations $\tau_\beta(y)$ are nonnegative by
\eqref{eq:two-point-connectivity} and the free-boundary limit.
If $\bar x=\bar0$, then
$\langle\sigma_{\bar0}\sigma_{\bar x}\rangle_{L,\beta}^{\mathrm{per}}=1$.
Since $x\in L\mathbb Z^d$, the term $u=-x/L$ in
\eqref{eq:torus-two-point-comparison} equals $\tau_\beta(0)=1$.
Thus \eqref{eq:torus-two-point-comparison} holds when
$\bar x=\bar0$.
We now assume $\bar x\ne\bar0$.

We first compare $\tau_\beta^{(R)}(y)$ and $\tau_\beta(y)$.
Consider the free FK measure with interaction $J$ in a box
$B_M$ containing $0$ and $y$.
Condition all edges $\{u,v\}$ with $\|u-v\|_\infty>R$
to be closed.
The remaining edges have the free FK law with interaction
$J^{(R)}$.
By positive association of the FK measure
\citep[Section~3.2]{Grimmett2006}, conditioning edges to be
closed cannot increase connection probabilities.
Using \eqref{eq:two-point-connectivity} in $B_M$ and taking
the free-boundary limits as $M\to\infty$ gives
\[
\tau_\beta^{(R)}(y)\leq\tau_\beta(y),
\qquad y\in\mathbb Z^d.
\]
Applying \eqref{eq:finite-range-unfolding} and the inequality
$\tau_\beta^{(R)}(y)\leq\tau_\beta(y)$ gives
\begin{equation}
\label{eq:truncated-two-point-bound}
\bigl\langle\sigma_{\bar0}\sigma_{\bar x}\bigr\rangle_{L,\beta}^{\mathrm{per},(R)}
\leq\sum_{u\in\mathbb Z^d}\tau_\beta^{(R)}(x+Lu)
\leq\sum_{u\in\mathbb Z^d}\tau_\beta(x+Lu).
\end{equation}

Monotone convergence applies to the nonnegative sum in
\eqref{eq:periodized-truncation}.
On a finite torus, the Gibbs weights are continuous in the
pair couplings and their sum is positive.
Therefore, as $R\to\infty$,
\begin{equation}
\label{eq:periodic-truncation-convergence}
\begin{aligned}
J_L^{(R)}(\bar z)&\uparrow J_L(\bar z)
\qquad(\bar z\ne\bar0),\\
\bigl\langle\sigma_{\bar0}\sigma_{\bar x}\bigr\rangle_{L,\beta}^{\mathrm{per},(R)}
&\longrightarrow
\bigl\langle\sigma_{\bar0}\sigma_{\bar x}\bigr\rangle_{L,\beta}^{\mathrm{per}}
\end{aligned}
\end{equation}
The upper bound $\sum_{u\in\mathbb Z^d}\tau_\beta(x+Lu)$
in \eqref{eq:truncated-two-point-bound} is independent of $R$.
Letting $R\to\infty$ and using
\eqref{eq:periodic-truncation-convergence} proves
\eqref{eq:torus-two-point-comparison}.

To sum \eqref{eq:torus-two-point-comparison}, take the set
of representatives
\begin{equation}
\label{eq:torus-fundamental-domain}
\mathcal F_L:=\{0,1,\ldots,L-1\}^d.
\end{equation}
Every $y\in\mathbb Z^d$ has a unique representation
$y=x+Lu$ with $x\in\mathcal F_L$ and $u\in\mathbb Z^d$.
Summing over the torus sites and using Tonelli's theorem gives
\begin{align}
\sum_{\bar x\in\mathbb T_L^d}
\bigl\langle\sigma_{\bar0}\sigma_{\bar x}\bigr\rangle_{L,\beta}^{\mathrm{per}}
&\leq\sum_{x\in\mathcal F_L}\sum_{u\in\mathbb Z^d}
\tau_\beta(x+Lu)\notag\\
&=\sum_{y\in\mathbb Z^d}\tau_\beta(y)
=\chi_J(\beta).
\label{eq:periodic-correlation-sum}
\end{align}
Combining \eqref{eq:periodic-correlation-sum} with
\eqref{eq:periodic-susceptibility-FK} proves
\eqref{eq:periodic-susceptibility-new}.
\end{proof}

\subsubsection{Uniform cluster-size tails on the torus}
\label{subsubsec:periodic-tail}

To prove \cref{prop:periodic-cluster-tail}, we first derive a
cluster-size tail bound for finite random-cluster models.
Let $\mathsf G=(\mathsf V,\mathsf E)$ be a finite graph with
vertex set $\mathsf V$ and nonempty edge set $\mathsf E$.
Assign a positive weight $\mathcal J_e$ to each edge
$e\in\mathsf E$.
Assume that, for any two vertices $v,w\in\mathsf V$, there is
a graph automorphism that sends $v$ to $w$ and preserves all
edge weights.

Fix $q\geq1$.
For $t>0$, define
\begin{equation}
\label{eq:finite-RC-parametrization}
\phi_{\mathsf G,t,q}(\omega)
:=\frac{1}{Z_{\mathsf G,t,q}}q^{k(\omega)}
\prod_{e\in\mathsf E}
\bigl(e^{t\mathcal J_e}-1\bigr)^{\omega_e},
\qquad \omega\in\{0,1\}^{\mathsf E}.
\end{equation}
Here $Z_{\mathsf G,t,q}$ is the normalizing constant, and
$k(\omega)$ is the number of open clusters.
This is the random-cluster measure with edge parameters
$p_e=1-e^{-t\mathcal J_e}$.
At $t=0$, let $\phi_{\mathsf G,0,q}$ assign probability one
to the configuration in which every edge is closed.

For $v\in\mathsf V$, let $C(v)$ be its open cluster and set
\begin{equation}
\label{eq:finite-chi-C}
\chi_{\mathsf G}(t):=\mathbb E|C(v)|,
\qquad
\mathfrak C_{\mathsf G}(t)
:=\max_{e\in\mathsf E}
\frac{e^{t\mathcal J_e}-1}{\mathcal J_e}.
\end{equation}
The expectation is taken under $\phi_{\mathsf G,t,q}$.
The assumption on graph automorphisms implies that
$\mathbb E|C(v)|$ does not depend on $v$.
For the torus application, we will take $q=2$, $t=\beta$, and
$\mathcal J_{\{\bar x,\bar y\}}=2J_L(\bar x-\bar y)$.
The case of a torus with no edges will be treated separately.

\begin{lemma}[Weighted derivative formula]
\label{lem:weighted-derivative}
Let $F:\{0,1\}^{\mathsf E}\to\mathbb R$ be independent of $t$.
For every $t>0$,
\begin{equation}
\label{eq:exact-derivative}
\frac{d}{dt}\mathbb EF
=\sum_{e\in\mathsf E}
\frac{\mathcal J_e e^{t\mathcal J_e}}
     {e^{t\mathcal J_e}-1}
\operatorname{Cov}(F,\omega_e).
\end{equation}
The expectation and covariance are taken under
$\phi_{\mathsf G,t,q}$.
If $F$ is increasing, then
\begin{equation}
\label{eq:derivative-lower}
\frac{d}{dt}\mathbb EF
\geq\frac{1}{\mathfrak C_{\mathsf G}(t)}
\sum_{e\in\mathsf E}\operatorname{Cov}(F,\omega_e).
\end{equation}
\end{lemma}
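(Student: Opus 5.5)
The plan is to differentiate the finite sum defining $\mathbb EF$ directly. Write $w_t(\omega):=q^{k(\omega)}\prod_{e}(e^{t\mathcal J_e}-1)^{\omega_e}$, so that
\[
\mathbb EF=\frac{\sum_\omega F(\omega)w_t(\omega)}{Z_{\mathsf G,t,q}},
\qquad Z_{\mathsf G,t,q}=\sum_\omega w_t(\omega).
\]
For $t>0$ every factor $e^{t\mathcal J_e}-1$ is positive, since $\mathcal J_e>0$. Hence $\log w_t(\omega)$ is smooth in $t$, with
\[
\frac{d}{dt}\log w_t(\omega)=\sum_{e\in\mathsf E}\omega_e\,\frac{\mathcal J_e e^{t\mathcal J_e}}{e^{t\mathcal J_e}-1}.
\]
Set $h_e(t):=\mathcal J_e e^{t\mathcal J_e}/(e^{t\mathcal J_e}-1)$. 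Applying the quotient rule to the finite sums gives
\[
\frac{d}{dt}\mathbb EF=\mathbb E\Bigl[F\sum_e h_e\omega_e\Bigr]-\mathbb EF\;\mathbb E\Bigl[\sum_e h_e\omega_e\Bigr].
\]
This is $\sum_e h_e(t)\operatorname{Cov}(F,\omega_e)$, which is \eqref{eq:exact-derivative}. Because everything is a finite sum of smooth functions, no interchange issues arise.

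For \eqref{eq:derivative-lower}, use positive association (FKG) of the random-cluster measure with $q\geq1$ \citep[Section~3.2]{Grimmett2006}. Since $F$ and $\omega_e$ are both increasing, $\operatorname{Cov}(F,\omega_e)\geq0$ for every $e$. It therefore suffices to bound each coefficient from below: $h_e(t)\geq 1/\mathfrak C_{\mathsf G}(t)$. Since $e^{t\mathcal J_e}\geq1$,
\[
h_e(t)\geq\frac{\mathcal J_e}{e^{t\mathcal J_e}-1}
=\Bigl(\frac{e^{t\mathcal J_e}-1}{\mathcal J_e}\Bigr)^{-1}
\geq\frac{1}{\mathfrak C_{\mathsf G}(t)},
\]
where the last step uses the definition of $\mathfrak C_{\mathsf G}(t)$ as the maximum over edges. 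Multiplying these coefficient bounds by the nonnegative covariances and summing over $e$ gives \eqref{eq:derivative-lower}.

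No step is a real obstacle. The only point needing care is to invoke FKG in the correct generality: $q\geq1$ on a finite graph with edge-dependent $p_e\in(0,1)$, which the cited reference covers.
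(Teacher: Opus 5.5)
Your proposal is correct and follows essentially the same route as the paper. You differentiate $\log w_t$ and apply the quotient rule, where the paper differentiates $\phi_{\mathsf G,t,q}(\omega)$ pointwise, which is the same computation; you then use FKG together with the coefficient bound $\mathcal J_e e^{t\mathcal J_e}/(e^{t\mathcal J_e}-1)\geq \mathcal J_e/(e^{t\mathcal J_e}-1)\geq 1/\mathfrak C_{\mathsf G}(t)$.
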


\begin{proof}[Proof of \cref{lem:weighted-derivative}]
Write
\[
w_t(\omega):=q^{k(\omega)}
\prod_{e\in\mathsf E}
\bigl(e^{t\mathcal J_e}-1\bigr)^{\omega_e}
\]
for the unnormalized weight in
\eqref{eq:finite-RC-parametrization}.
For $t>0$, logarithmic differentiation gives
\[
\frac{d}{dt}\log w_t(\omega)
=\sum_{e\in\mathsf E}
\omega_e\frac{\mathcal J_e e^{t\mathcal J_e}}
                  {e^{t\mathcal J_e}-1}.
\]
Since $Z_{\mathsf G,t,q}=\sum_\omega w_t(\omega)$ and
there are finitely many configurations,
\[
\frac{d}{dt}\log Z_{\mathsf G,t,q}
=\sum_{e\in\mathsf E}
\frac{\mathcal J_e e^{t\mathcal J_e}}
     {e^{t\mathcal J_e}-1}\mathbb E\omega_e.
\]
Differentiating
$\phi_{\mathsf G,t,q}(\omega)=w_t(\omega)/Z_{\mathsf G,t,q}$
therefore yields
\[
\frac{d}{dt}\phi_{\mathsf G,t,q}(\omega)
=\phi_{\mathsf G,t,q}(\omega)
\sum_{e\in\mathsf E}
\frac{\mathcal J_e e^{t\mathcal J_e}}
     {e^{t\mathcal J_e}-1}
\bigl(\omega_e-\mathbb E\omega_e\bigr).
\]
Multiply by $F(\omega)$ and sum over $\omega$.
The identity
$\mathbb E[F(\omega)(\omega_e-\mathbb E\omega_e)]
=\operatorname{Cov}(F,\omega_e)$ then gives
\eqref{eq:exact-derivative}.

If $F$ is increasing, the FKG inequality for $q\geq1$ gives
$\operatorname{Cov}(F,\omega_e)\geq0$ for every edge
\citep{Grimmett2006}.
By \eqref{eq:finite-chi-C},
\[
\frac{\mathcal J_e e^{t\mathcal J_e}}
     {e^{t\mathcal J_e}-1}
\geq\frac{\mathcal J_e}{e^{t\mathcal J_e}-1}
\geq\frac{1}{\mathfrak C_{\mathsf G}(t)}.
\]
Substituting this lower bound for each coefficient in
\eqref{eq:exact-derivative} proves
\eqref{eq:derivative-lower}.
\end{proof}

The following lemma uses the ghost-field argument of
\citet[Proposition~3.1]{Hutchcroft2020} to bound the covariance sum in
\eqref{eq:derivative-lower} for
$F=\mathbf1_{\{|C(v)|\geq n\}}$.

\begin{lemma}[Ghost-field covariance inequality]
\label{lem:ghost-covariance}
For every $v\in\mathsf V$, $t\geq0$, and
$1\leq n\leq|\mathsf V|$,
\begin{equation}
\label{eq:ghost-covariance}
\sum_{e\in\mathsf E}
\operatorname{Cov}\bigl(\mathbf1_{\{|C(v)|\geq n\}},\omega_e\bigr)
\geq\frac12\left(\frac{n}{\chi_{\mathsf G}(t)}-1\right)
\phi_{\mathsf G,t,q}(|C(v)|\geq n).
\end{equation}
The covariance is taken under $\phi_{\mathsf G,t,q}$.
\end{lemma}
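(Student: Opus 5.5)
The plan is to adapt the ghost-field argument of \citet[Proposition~3.1]{Hutchcroft2020} to the finite weighted transitive graph $\mathsf G$. Write $A_n:=\{|C(v)|\geq n\}$. For $t=0$ both sides are handled directly. At $t=0$ every cluster is a singleton, so $\chi_{\mathsf G}(0)=1$. For $n=1$ the right-hand side is zero, and for $n\geq2$ we have $\phi_{\mathsf G,0,q}(A_n)=0$, so the inequality is trivial. Hence assume $t>0$.

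\emph{Step 1: reduce to an edge count.} Summing over edges turns the left side into a single covariance,
\[
\sum_{e\in\mathsf E}\operatorname{Cov}(\mathbf1_{A_n},\omega_e)=\operatorname{Cov}(\mathbf1_{A_n},N),\qquad N:=\sum_{e\in\mathsf E}\omega_e.
\]
Equivalently, we need $\mathbb E[N\mid A_n]-\mathbb E N\geq\frac12(n/\chi_{\mathsf G}(t)-1)$ whenever $\phi_{\mathsf G,t,q}(A_n)>0$.

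\emph{Step 2: introduce a ghost vertex.} Following Hutchcroft, I would add a ghost vertex $g$ joined to every $x\in\mathsf V$ by an edge with parameter $1-e^{-h}$. I would then compare the probability that $v$ is not connected to $g$, namely $\mathbb E[e^{-h|C(v)|}]$ under the appropriate tilt, with the probability of $A_n$. The key consequence of transitivity is the mass-transport identity $\mathbb E\sum_{x}\mathbf1_{\{x\leftrightarrow v\}}/|C(x)|=1$. I would pair it with the observation that on $A_n$ the cluster of $v$ carries at least $n-1$ open edges. Under the unconditioned measure, on the other hand, the expected number of open edges per vertex is controlled through $\chi_{\mathsf G}(t)=\mathbb E|C(v)|$. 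Concretely, I would show:
\begin{enumerate}
\item[(a)] On $A_n$, $N$ gains at least $n-1$ edges inside $C(v)$. The configuration outside $C(v)$, conditioned on $C(v)=S$, is a random-cluster measure on $\mathsf V\setminus S$ with free boundary. By positive association, and comparing with the original measure restricted there, it loses in expectation at most the number of open edges incident to $S$.
\item[(b)] By transitivity, that expected loss is bounded by $\frac12|S|$ times a factor involving $1/\chi_{\mathsf G}(t)$. Summing over $S$ with $|S|\geq n$ produces the ratio $n/\chi_{\mathsf G}(t)$, and the $-1$ and the factor $\frac12$ are absorbed from the $n-1$ and the double counting of edges.
\end{enumerate}

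\emph{Main obstacle.} The delicate step is (a)/(b): obtaining a lower bound on $\mathbb E[N\mid A_n]-\mathbb EN$ with exactly the constant $\frac12(n/\chi_{\mathsf G}(t)-1)$. This requires controlling how much conditioning on $C(v)=S$, which closes the edge boundary of $S$, depresses the open-edge count elsewhere. If the direct comparison proves lossy, I would instead follow Hutchcroft literally: differentiate $\log\phi(A_n)$ in the ghost parameter $h$ and use the FKG inequality for $q\geq1$ to transfer the estimate. The bound $\mathbb E[|C(v)|^{-1}\mathbf1_{A_n}]\leq\phi(A_n)/n$ together with Jensen's inequality, $\mathbb E|C(v)|=\chi_{\mathsf G}(t)$, then supplies the comparison between $n$ and $\chi_{\mathsf G}(t)$.
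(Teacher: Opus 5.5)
\paragraph{Gap in Step~2.}
Your reduction to $\operatorname{Cov}(\mathbf 1_{A_n},N)$ and your treatment of $t=0$ are correct. Step~2, however, has no mechanism that produces the lower bound, and the per-cluster bookkeeping in (a)--(b) cannot work.

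Positive association only tells you that conditioning on $C(v)=S$, which closes $\partial S$, \emph{decreases} the expected number of open edges in $\mathsf V\setminus S$. That is the wrong direction for a lower bound, and it says nothing about the size of the decrease.

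More fundamentally, no bound on $\mathbb E[N\mid C(v)=S]-\mathbb E N$ of the kind (b) requires can hold for individual $S$. Take Bernoulli percolation ($q=1$) on a large $\mathbb Z^2$-torus and let $S$ be a straight path of $k$ vertices. Then exactly $\mathbb E[N\mid C(v)=S]-\mathbb E N=(k-1)-p(3k+1)$, which is negative for every $p\in(1/3,1/2)$. Yet for such $p$, $\chi_{\mathsf G}(t)$ stays bounded uniformly in the torus size, and the right-hand side of the lemma is positive once $n>\chi_{\mathsf G}(t)$.

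So positivity only emerges after averaging over cluster shapes, and nothing in (a)--(b) controls that average. Transitivity merely equalises expected open degrees, which here equal $4p>1$. The remaining ingredients do not help either:
\begin{itemize}
\item The mass-transport identity you quote is the deterministic fact $\sum_{x\in C(v)}|C(x)|^{-1}=1$ and carries no information.
\item Differentiating in a ghost parameter yields covariances with ghost variables, not the bond covariances $\operatorname{Cov}(\mathbf 1_{A_n},\omega_e)$ the lemma is about.
\item For $q>1$, a ghost vertex wired into the random-cluster graph changes the law of the original bonds, so you would no longer be working under $\phi_{\mathsf G,t,q}$.
\end{itemize}

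\paragraph{The missing idea.}
What you need is the OSSS inequality for monotonic measures, in Hutchcroft's decision-forest form. It is what converts a covariance with an auxiliary event into a revealment-weighted sum of bond covariances. The argument runs as follows.
\begin{itemize}
\item Take the ghost field to be i.i.d.\ Bernoulli$(1-e^{-\lambda/n})$ labels $\eta_u$ on the vertices, independent of $\omega$. The product with $\phi_{\mathsf G,t,q}$ is still monotonic.
\item Let $B$ be the event that $C(v)$ contains a green vertex, and set $M_\lambda:=\mathbb P(B)=\mathbb E[1-e^{-\lambda|C(v)|/n}]$.
\item Use the forest that, for each $u$, reveals $\eta_u$ and explores $C(u)$ when $u$ is green; it determines $\mathbf 1_B$. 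An edge $\{x,y\}$ is revealed only if $C(x)$ or $C(y)$ contains a green vertex, so by transitivity its revealment is at most $2M_\lambda$.
\item The vertex terms vanish, since $\operatorname{Cov}(\mathbf 1_{A_n},\eta_u)=0$ by independence.
\item OSSS and FKG then give $\operatorname{Cov}(\mathbf 1_{A_n},\mathbf 1_B)\le 2M_\lambda\sum_{e}\operatorname{Cov}(\mathbf 1_{A_n},\omega_e)$.
\item Directly, $\operatorname{Cov}(\mathbf 1_{A_n},\mathbf 1_B)\ge(1-e^{-\lambda}-M_\lambda)\,\phi_{\mathsf G,t,q}(A_n)$.
\item Divide by $2M_\lambda$ and let $\lambda\downarrow0$. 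Using $M_\lambda/\lambda\to\chi_{\mathsf G}(t)/n$, this gives the lemma.
\end{itemize}
The factor $\frac12$ comes from the two endpoints in the revealment bound. The $-1$ comes from subtracting $M_\lambda\,\phi_{\mathsf G,t,q}(A_n)$ in the covariance. Neither comes from the $n-1$ spanning edges or from double counting.
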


\begin{proof}[Proof of \cref{lem:ghost-covariance}]
If $t=0$, every edge is closed and $|C(v)|=1$.
Thus $\chi_{\mathsf G}(0)=1$, and both sides of
\eqref{eq:ghost-covariance} are zero.
We now fix $t>0$ and set
\begin{equation}
\label{eq:ghost-tail-indicator}
F:=\mathbf1_{\{|C(v)|\geq n\}}.
\end{equation}

Fix $\lambda>0$ and sample $\omega$ with law
$\phi_{\mathsf G,t,q}$.
Independently of $\omega$, assign to each vertex $u$ a
Bernoulli variable $\eta_u$ with success probability
$1-e^{-\lambda/n}$.
The variables $(\eta_u)_{u\in\mathsf V}$ are independent.
In this proof, $\mathbb P$, $\mathbb E$, and
$\operatorname{Cov}$ refer to the joint law of $\omega$ and
$\eta$.
For random variables depending only on $\omega$, these agree
with probability, expectation, and covariance under
$\phi_{\mathsf G,t,q}$.

This auxiliary family is called the ghost field.
Call a vertex $u$ green if $\eta_u=1$, and let
\[
B:=\{C(v)\text{ contains at least one green vertex}\}.
\]
Conditional on $\omega$, the probability that none of the
vertices in $C(v)$ is green is $e^{-\lambda|C(v)|/n}$.
Therefore
\begin{align}
\mathbb P(B\mid\omega)
&=1-e^{-\lambda|C(v)|/n},\notag\\
M_\lambda:=\mathbb P(B)
&=\mathbb E\bigl[1-e^{-\lambda|C(v)|/n}\bigr].
\label{eq:ghost-magnetization}
\end{align}
The joint law is invariant under the graph automorphisms
that preserve the edge weights.
Hence the probability that $C(u)$ contains a green vertex
is $M_\lambda$ for every $u\in\mathsf V$.

Fix an ordering of the edges.
For each $u\in\mathsf V$, define a decision tree $T_u$ as follows.
First reveal $\eta_u$, and stop if $\eta_u=0$.
If $\eta_u=1$, explore the open cluster of $u$.
Start with $u$ as the only discovered vertex.
At each step, reveal the first unrevealed edge with a
discovered endpoint.
If this edge is open, add its other endpoint to the
discovered vertices.
Stop when no unrevealed edge has a discovered endpoint.
The discovered vertices then form $C(u)$.

The collection $\mathcal T=(T_u)_{u\in\mathsf V}$ is a
decision forest.
It determines $\mathbf1_B$: the event $B$ occurs exactly when
an exploration from a green vertex discovers $v$.
For a coordinate $i\in\mathsf E\sqcup\mathsf V$, let
$\delta_i(\mathcal T)$ be the probability that at least one
of the trees reveals that coordinate.
An edge coordinate is $\omega_e$, and a vertex coordinate is
$\eta_u$.
If an edge $e=\{x,y\}$ is revealed, an exploration from a
green vertex has reached $x$ or $y$ along open edges.
Consequently, $C(x)$ or $C(y)$ contains a green vertex.
The union bound gives
\begin{align}
\delta_e(\mathcal T)
&\leq\mathbb P(C(x)\text{ contains a green vertex})\notag\\
&\quad+\mathbb P(C(y)\text{ contains a green vertex})
=2M_\lambda.
\label{eq:ghost-revealment}
\end{align}

For $q\geq1$, the FK law and its product with the independent
ghost law are monotonic \citep[Sections~3.1--3.2]{Grimmett2006}.
Here monotonicity means that the conditional probability
of a coordinate being $1$ cannot decrease when the values
of the other coordinates increase.
Both $F$ and $\mathbf1_B$ are increasing and take values in
$\{0,1\}$.
Also, $F$ depends only on $\omega$, so independence gives
$\operatorname{Cov}(F,\eta_u)=0$ for every $u\in\mathsf V$.
The Boolean form of the decision-forest OSSS inequality
\citep[Corollary~2.4 and (2.3)]{Hutchcroft2020} gives
\begin{align}
\operatorname{Cov}(F,\mathbf1_B)
&\leq\sum_{e\in\mathsf E}\delta_e(\mathcal T)
\operatorname{Cov}(F,\omega_e)
+\sum_{u\in\mathsf V}\delta_u(\mathcal T)
\operatorname{Cov}(F,\eta_u)\notag\\
&\leq2M_\lambda\sum_{e\in\mathsf E}
\operatorname{Cov}(F,\omega_e).
\label{eq:OSSS-ghost}
\end{align}
The second inequality uses \eqref{eq:ghost-revealment} and
$\operatorname{Cov}(F,\omega_e)\geq0$, which follows from FKG.

We also obtain a lower bound for
$\operatorname{Cov}(F,\mathbf1_B)$.
On the event $\{F=1\}$, we have $|C(v)|\geq n$, and hence
$1-e^{-\lambda|C(v)|/n}\geq1-e^{-\lambda}$.
Using \eqref{eq:ghost-magnetization} gives
\begin{align}
\operatorname{Cov}(F,\mathbf1_B)
&=\mathbb E\bigl[F(1-e^{-\lambda|C(v)|/n})\bigr]
-M_\lambda\mathbb EF\notag\\
&\geq(1-e^{-\lambda}-M_\lambda)
\phi_{\mathsf G,t,q}(|C(v)|\geq n).
\label{eq:ghost-lower-cov}
\end{align}
Since $|C(v)|\geq1$,
$M_\lambda\geq1-e^{-\lambda/n}>0$.
Combining \eqref{eq:OSSS-ghost} and
\eqref{eq:ghost-lower-cov}, and dividing by $2M_\lambda$, yields
\begin{equation}
\label{eq:ghost-before-limit}
\sum_{e\in\mathsf E}\operatorname{Cov}(F,\omega_e)
\geq\frac12\left(\frac{1-e^{-\lambda}}{M_\lambda}-1\right)
\phi_{\mathsf G,t,q}(|C(v)|\geq n).
\end{equation}

Finally,
\[
0\leq\frac{1-e^{-\lambda|C(v)|/n}}{\lambda}
\leq\frac{|C(v)|}{n}\leq\frac{|\mathsf V|}{n}.
\]
For each edge configuration,
$(1-e^{-\lambda|C(v)|/n})/\lambda$ converges to $|C(v)|/n$
as $\lambda\downarrow0$.
The law $\phi_{\mathsf G,t,q}$ of $\omega$ does not depend on
$\lambda$.
Dominated convergence in \eqref{eq:ghost-magnetization}
therefore gives
\[
\frac{M_\lambda}{\lambda}\longrightarrow
\frac{\chi_{\mathsf G}(t)}{n},
\qquad
\frac{1-e^{-\lambda}}{\lambda}\longrightarrow1.
\]
Letting $\lambda\downarrow0$ in
\eqref{eq:ghost-before-limit} proves
\eqref{eq:ghost-covariance}.
\end{proof}

Combining \cref{lem:weighted-derivative,lem:ghost-covariance}
gives the cluster-size differential inequality.

\begin{lemma}[Cluster-size differential inequality]
\label{prop:cluster-tail-differential}
For every $v\in\mathsf V$, $t>0$, and
$1\leq n\leq|\mathsf V|$ such that
$\phi_{\mathsf G,t,q}(|C(v)|\geq n)>0$,
\[
\mathfrak C_{\mathsf G}(t)\frac{d}{dt}
\log\phi_{\mathsf G,t,q}(|C(v)|\geq n)
\geq\frac12\left(\frac{n}{\chi_{\mathsf G}(t)}-1\right).
\]
\end{lemma}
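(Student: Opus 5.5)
The inequality follows by chaining \cref{lem:weighted-derivative} with \cref{lem:ghost-covariance}. Fix $v\in\mathsf V$, $t>0$ and $1\leq n\leq|\mathsf V|$ with $\phi_{\mathsf G,t,q}(|C(v)|\geq n)>0$. Take $F:=\mathbf1_{\{|C(v)|\geq n\}}$. This function does not depend on $t$, and it is increasing, because opening edges can only enlarge $C(v)$. Write $P(t):=\phi_{\mathsf G,t,q}(|C(v)|\geq n)=\mathbb EF$.

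The map $t\mapsto P(t)$ is a ratio of finite sums of smooth functions of $t$ with a positive denominator on $t>0$. It is therefore differentiable. Since $P(t)>0$, we have
\[
\frac{d}{dt}\log P(t)=\frac{P'(t)}{P(t)}.
\]

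Applying \eqref{eq:derivative-lower} of \cref{lem:weighted-derivative} to this $F$ gives
\[
P'(t)\geq\frac{1}{\mathfrak C_{\mathsf G}(t)}\sum_{e\in\mathsf E}\operatorname{Cov}(F,\omega_e).
\]
Next, bound the covariance sum from below using \eqref{eq:ghost-covariance} of \cref{lem:ghost-covariance}, which is valid for all $t\geq0$ and $1\leq n\leq|\mathsf V|$. Combining the two estimates,
\[
\mathfrak C_{\mathsf G}(t)P'(t)\geq\frac12\left(\frac{n}{\chi_{\mathsf G}(t)}-1\right)P(t).
\]
The constant $\mathfrak C_{\mathsf G}(t)$ is positive and finite, since $\mathsf E$ is nonempty and each $\mathcal J_e>0$. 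Dividing by $P(t)>0$ then yields the claimed differential inequality.

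No step here is a real obstacle. The only points to check are that multiplying by $\mathfrak C_{\mathsf G}(t)$ preserves the direction of the inequality, which holds because it is positive, and that $\log P$ is well defined and differentiable at $t$. Both follow from finiteness of the configuration space and the hypothesis $P(t)>0$.
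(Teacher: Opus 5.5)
Your proposal is correct and matches the paper's proof: you take $F=\mathbf 1_{\{|C(v)|\geq n\}}$, apply \eqref{eq:derivative-lower}, then \eqref{eq:ghost-covariance}, and divide by $\phi_{\mathsf G,t,q}(|C(v)|\geq n)>0$. Your added checks (differentiability of $P$ on $t>0$ and $0<\mathfrak C_{\mathsf G}(t)<\infty$) are valid details that the paper leaves implicit.
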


\begin{proof}[Proof of \cref{prop:cluster-tail-differential}]
Set $F:=\mathbf1_{\{|C(v)|\geq n\}}$.
Applying \eqref{eq:derivative-lower} to $F$ and then using
\eqref{eq:ghost-covariance} gives
\begin{align*}
\frac{d}{dt}\phi_{\mathsf G,t,q}(|C(v)|\geq n)
&\geq\frac{1}{\mathfrak C_{\mathsf G}(t)}
\sum_{e\in\mathsf E}\operatorname{Cov}(F,\omega_e)\\
&\geq\frac{1}{2\mathfrak C_{\mathsf G}(t)}
\left(\frac{n}{\chi_{\mathsf G}(t)}-1\right)
\phi_{\mathsf G,t,q}(|C(v)|\geq n).
\end{align*}
The covariance is taken under $\phi_{\mathsf G,t,q}$.
Dividing by $\phi_{\mathsf G,t,q}(|C(v)|\geq n)>0$ and
multiplying by $\mathfrak C_{\mathsf G}(t)$ proves
\cref{prop:cluster-tail-differential}.
\end{proof}

Integrating the inequality in
\cref{prop:cluster-tail-differential} gives a cluster-size
tail bound in terms of $\chi_{\mathsf G}(t_0)$ at a larger
parameter $t_0$.

\begin{lemma}[Integrated cluster-size tail bound]
\label{lem:integrated-tail}
Let $0\leq t<t_0$.
For every $v\in\mathsf V$ and $n\geq1$,
\begin{equation}
\label{eq:integrated-tail}
\phi_{\mathsf G,t,q}(|C(v)|\geq n)
\leq
\exp\!\left\{\frac{t_0-t}{2\mathfrak C_{\mathsf G}(t_0)}\right\}
\exp\!\left\{-\frac{(t_0-t)n}
{2\mathfrak C_{\mathsf G}(t_0)\chi_{\mathsf G}(t_0)}\right\}.
\end{equation}
\end{lemma}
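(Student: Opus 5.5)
We integrate the differential inequality of \cref{prop:cluster-tail-differential} over $s\in[t,t_0]$, after replacing its $s$-dependent coefficients by their values at $t_0$. Write $f(s):=\phi_{\mathsf G,s,q}(|C(v)|\geq n)$. Two monotonicity facts are needed on $[t,t_0]$. First, $\mathfrak C_{\mathsf G}(s)\leq\mathfrak C_{\mathsf G}(t_0)$, because $(e^{s\mathcal J_e}-1)/\mathcal J_e$ is increasing in $s$. Second, $\chi_{\mathsf G}(s)\leq\chi_{\mathsf G}(t_0)$ and $f$ is nondecreasing in $s$. Both follow from \eqref{eq:derivative-lower} applied to the increasing functions $|C(v)|$ and $\mathbf1_{\{|C(v)|\geq n\}}$, since the covariances are nonnegative by FKG. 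At $s=0$ we use continuity from the right: the weights $(e^{s\mathcal J_e}-1)^{\omega_e}$ vanish for open edges as $s\downarrow0$, so $\phi_{\mathsf G,s,q}\to\phi_{\mathsf G,0,q}$.

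We then dispose of trivial cases. If $n>|\mathsf V|$, the left side of \eqref{eq:integrated-tail} is $0$. If $n\leq\chi_{\mathsf G}(t_0)$, the right side is at least $1$, because its exponent $\frac{t_0-t}{2\mathfrak C_{\mathsf G}(t_0)}\bigl(1-\frac{n}{\chi_{\mathsf G}(t_0)}\bigr)$ is $\geq0$. So we may assume $\chi_{\mathsf G}(t_0)<n\leq|\mathsf V|$. If $f(t)=0$ there is nothing to prove. Otherwise $f(s)\geq f(t)>0$ for all $s\in[t,t_0]$ by monotonicity, so \cref{prop:cluster-tail-differential} applies at every $s\in(t,t_0]$ (and at $s=t$ when $t>0$).

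In this range the right side of \cref{prop:cluster-tail-differential} is positive. Using $\mathfrak C_{\mathsf G}(s)\leq\mathfrak C_{\mathsf G}(t_0)$ and $n/\chi_{\mathsf G}(s)\geq n/\chi_{\mathsf G}(t_0)>1$, we get
\[
\frac{d}{ds}\log f(s)\geq\frac{1}{2\mathfrak C_{\mathsf G}(t_0)}\left(\frac{n}{\chi_{\mathsf G}(t_0)}-1\right),\qquad s\in(t,t_0].
\]
The function $\log f$ is continuously differentiable on $(0,t_0]$, because $f$ is a ratio of finite sums of exponentials with positive denominator. It is also continuous at $t$, including $t=0$ by the continuity just noted. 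Integrating from $t$ to $t_0$ and using $f(t_0)\leq1$ gives
\[
\log f(t)\leq\log f(t_0)-\frac{t_0-t}{2\mathfrak C_{\mathsf G}(t_0)}\left(\frac{n}{\chi_{\mathsf G}(t_0)}-1\right)\leq-\frac{t_0-t}{2\mathfrak C_{\mathsf G}(t_0)}\left(\frac{n}{\chi_{\mathsf G}(t_0)}-1\right).
\]
Exponentiating, this is exactly \eqref{eq:integrated-tail}.

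The main care points are the monotonicity in $s$ of $\chi_{\mathsf G}$ and of $\mathfrak C_{\mathsf G}$, which is what lets us freeze both coefficients at $t_0$, and the endpoint $t=0$. Both are handled by the FKG consequence of \eqref{eq:derivative-lower} and by continuity of the finite-graph measure in $s$.
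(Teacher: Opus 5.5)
Your proposal is correct and follows the paper's proof essentially step for step. You dispose of the cases $f(t)=0$ and $n\leq\chi_{\mathsf G}(t_0)$, freeze the coefficients of the differential inequality at $t_0$ via the FKG monotonicity of $f$ and $\chi_{\mathsf G}$ and the monotonicity of $\mathfrak C_{\mathsf G}$, integrate over $[t,t_0]$, and use $f(t_0)\leq1$. The only difference is at $t=0$: your right-continuity argument is valid but unnecessary, since in the remaining case $n>\chi_{\mathsf G}(t_0)\geq1$ gives $\phi_{\mathsf G,0,q}(|C(v)|\geq n)=0$, so $t=0$ already falls under your case $f(t)=0$ (this is how the paper excludes it).
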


\begin{proof}[Proof of \cref{lem:integrated-tail}]
For $u\in[0,t_0]$, write
\begin{equation}
\label{eq:cluster-tail-probability}
P_u(n):=\phi_{\mathsf G,u,q}(|C(v)|\geq n).
\end{equation}
If $P_t(n)=0$, then \eqref{eq:integrated-tail} holds.
If $n\leq\chi_{\mathsf G}(t_0)$, then
\[
\exp\!\left\{
\frac{t_0-t}{2\mathfrak C_{\mathsf G}(t_0)}
\left(1-\frac{n}{\chi_{\mathsf G}(t_0)}\right)
\right\}\geq1.
\]
The bound $P_t(n)\leq1$ therefore implies
\eqref{eq:integrated-tail}.

It remains to consider
$P_t(n)>0$ and $n>\chi_{\mathsf G}(t_0)\geq1$.
The positivity of $P_t(n)$ implies $n\leq|\mathsf V|$.
At $t=0$, every cluster has size one, so $P_0(n)=0$ for
$n>1$.
Thus $t>0$ in the remaining case.

Both $\mathbf1_{\{|C(v)|\geq n\}}$ and $|C(v)|$ are
increasing functions of the edge configuration.
By \eqref{eq:exact-derivative} and FKG
\citep[Section~3.2]{Grimmett2006},
$P_u(n)$ and $\chi_{\mathsf G}(u)$ are nondecreasing in
$u>0$.
In particular, $P_u(n)\geq P_t(n)>0$ for every
$u\in[t,t_0]$.
The definition in \eqref{eq:finite-chi-C} also shows that
$\mathfrak C_{\mathsf G}(u)$ is positive and nondecreasing
for $u>0$.
Hence, for $u\in[t,t_0]$,
\[
\frac{1}{\mathfrak C_{\mathsf G}(u)}
\geq\frac{1}{\mathfrak C_{\mathsf G}(t_0)},
\qquad
\frac{n}{\chi_{\mathsf G}(u)}-1
\geq\frac{n}{\chi_{\mathsf G}(t_0)}-1>0.
\]
Applying \cref{prop:cluster-tail-differential} gives
\begin{align}
\frac{d}{du}\log P_u(n)
&\geq\frac{1}{2\mathfrak C_{\mathsf G}(u)}
\left(\frac{n}{\chi_{\mathsf G}(u)}-1\right)\notag\\
&\geq\frac{1}{2\mathfrak C_{\mathsf G}(t_0)}
\left(\frac{n}{\chi_{\mathsf G}(t_0)}-1\right).
\label{eq:uniform-log-tail-derivative}
\end{align}
Integrating \eqref{eq:uniform-log-tail-derivative} over
$[t,t_0]$ and using $P_{t_0}(n)\leq1$, we obtain
\begin{align*}
\log P_t(n)
&\leq\log P_{t_0}(n)
-\frac{t_0-t}{2\mathfrak C_{\mathsf G}(t_0)}
\left(\frac{n}{\chi_{\mathsf G}(t_0)}-1\right)\\
&\leq\frac{t_0-t}{2\mathfrak C_{\mathsf G}(t_0)}
-\frac{(t_0-t)n}
{2\mathfrak C_{\mathsf G}(t_0)\chi_{\mathsf G}(t_0)}.
\end{align*}
Exponentiating proves \eqref{eq:integrated-tail}.
\end{proof}

We now apply \cref{lem:integrated-tail} to prove
\cref{prop:periodic-cluster-tail}.

\begin{proof}[Proof of \cref{prop:periodic-cluster-tail}]
Fix $0\leq\beta<\beta_0<\beta_{\mathrm c}(J)$ and set
\begin{equation}
\label{eq:periodic-tail-constants}
\begin{aligned}
\delta&:=\beta_0-\beta,
&\qquad \mathfrak C_0&:=\beta_0e^{2\beta_0\|J\|_1},\\
A_0&:=\exp\!\left\{\frac{\delta}{2\beta_0}\right\},
& a_0&:=\frac{\delta}{2\mathfrak C_0\chi_J(\beta_0)}.
\end{aligned}
\end{equation}
Since $\beta_0<\beta_{\mathrm c}(J)$, we have
$1\leq\chi_J(\beta_0)<\infty$ by
\eqref{eq:free-susceptibility-critical-point}.
Thus $A_0$ and $a_0$ are positive and finite.

Fix an integer $L\geq1$.
If $E_L=\varnothing$ or $\beta=0$, every open cluster is a
singleton.
The probability in \eqref{eq:periodic-volume-tail-new}
is then $1$ for $m=1$ and $0$ for every integer $m\geq2$.
Since $\mathfrak C_0\geq\beta_0$ and $\chi_J(\beta_0)\geq1$,
\[
a_0\leq\frac{\delta}{2\beta_0}=\log A_0,
\]
so $A_0e^{-a_0}\geq1$.
This proves \eqref{eq:periodic-volume-tail-new} when
$E_L=\varnothing$ or $\beta=0$.
We now assume $E_L\ne\varnothing$ and $\beta>0$.

Let $\mathsf G_L=(\mathbb T_L^d,E_L)$, with $E_L$ defined in
\eqref{eq:periodic-FK-edge-set}, and assign the edge weights
\begin{equation}
\label{eq:torus-tail-edge-weights}
\mathcal J_{\{\bar x,\bar y\}}
:=2J_L(\bar x-\bar y),
\qquad \{\bar x,\bar y\}\in E_L.
\end{equation}
The weights $\mathcal J_e$ are positive for $e\in E_L$.
Torus translations preserve the edge set and the weights,
and act transitively on the vertices.
For every $t\geq0$ and every $\{\bar x,\bar y\}\in E_L$,
\[
1-e^{-t\mathcal J_{\{\bar x,\bar y\}}}
=1-e^{-2tJ_L(\bar x-\bar y)}.
\]
Comparing \eqref{eq:finite-RC-parametrization} with
\eqref{eq:periodic-FK-edge-parameters} and
\eqref{eq:periodic-FK-measure} gives
\[
\phi_{\mathsf G_L,t,2}=\phi^{\mathrm{per}}_{L,t,2},
\qquad t\geq0.
\]
We use $q=2$ in the definition of
$\chi_{\mathsf G_L}$.

By \eqref{eq:periodized-J-new} and the nonnegativity of $J$,
\[
0<\mathcal J_e\leq2\|J\|_1,
\qquad e\in E_L.
\]
For every $e\in E_L$,
\[
\beta_0
\leq\frac{e^{\beta_0\mathcal J_e}-1}{\mathcal J_e}
=\int_0^{\beta_0}e^{u\mathcal J_e}\,du
\leq\beta_0e^{2\beta_0\|J\|_1}.
\]
Taking the maximum over $e\in E_L$ yields
\begin{equation}
\label{eq:C-uniform-bound}
\beta_0\leq\mathfrak C_{\mathsf G_L}(\beta_0)
\leq\beta_0e^{2\beta_0\|J\|_1}
=\mathfrak C_0.
\end{equation}
At parameter $\beta_0$, \eqref{eq:periodic-susceptibility-FK}
and \eqref{eq:periodic-susceptibility-new} give
\begin{equation}
\label{eq:chi-beta0-uniform}
\chi_{\mathsf G_L}(\beta_0)
=\mathbb E|C_L(\bar0)|
\leq\chi_J(\beta_0)<\infty.
\end{equation}
The expectation in \eqref{eq:chi-beta0-uniform} is taken
under $\phi^{\mathrm{per}}_{L,\beta_0,2}$.

The bounds in \eqref{eq:C-uniform-bound} and
\eqref{eq:chi-beta0-uniform} imply
\begin{align*}
\frac{\delta}{2\mathfrak C_{\mathsf G_L}(\beta_0)}
&\leq\frac{\delta}{2\beta_0}=\log A_0,\\
\frac{\delta}
{2\mathfrak C_{\mathsf G_L}(\beta_0)\chi_{\mathsf G_L}(\beta_0)}
&\geq\frac{\delta}{2\mathfrak C_0\chi_J(\beta_0)}=a_0.
\end{align*}
Apply \eqref{eq:integrated-tail} on $\mathsf G_L$ with
$t=\beta$, $t_0=\beta_0$, $q=2$, $v=\bar0$, and $n=m$.
For every integer $m\geq1$, we obtain
\begin{align*}
\phi^{\mathrm{per}}_{L,\beta,2}
\bigl(|C_L(\bar0)|\geq m\bigr)
&\leq
\exp\!\left\{\frac{\delta}{2\mathfrak C_{\mathsf G_L}(\beta_0)}\right\}
\exp\!\left\{-\frac{\delta m}
{2\mathfrak C_{\mathsf G_L}(\beta_0)\chi_{\mathsf G_L}(\beta_0)}\right\}\\
&\leq A_0e^{-a_0m}.
\end{align*}
The constants $A_0$ and $a_0$ in
\eqref{eq:periodic-tail-constants} depend only on
$\beta,\beta_0,J$ and are independent of $L$.
This proves \eqref{eq:periodic-volume-tail-new}.
\end{proof}

\subsubsection{Proof of the finite-range unfolding estimate}
\label{subsubsec:periodic-unfolding}

For the zero-field nearest-neighbor ferromagnet,
\citet[Proposition~1.7]{LiuPanisSlade2025} proved
\eqref{eq:finite-range-unfolding} for $L\geq3$ and $\beta>0$.
We adapt their exploration and switching argument
\citep[Sections~3.2--3.3]{LiuPanisSlade2025}
to the finite-range interaction $J^{(R)}$.
We first construct a lattice graph and a torus graph that retain
the displacement of each edge as a label.
Different displacements can produce torus edges with the same
endpoints, and the labels keep those edges distinct.

Fix $\beta\geq0$, $R\geq1$, and an integer $L\geq1$.
Use the interactions $J^{(R)}$ and $J_L^{(R)}$ defined in
\eqref{eq:periodized-truncation}.
Choose
\[
\mathcal D_R\subset
\{z\in\mathbb Z^d\setminus\{0\}:J^{(R)}(z)>0\}
\]
containing exactly one element of each pair $\{z,-z\}$
for which $J^{(R)}(z)>0$, and define
\begin{equation}
\label{eq:cover-displacement-set}
\mathcal D_{R,L}:=\mathcal D_R\setminus L\mathbb Z^d.
\end{equation}
The displacements in $L\mathbb Z^d$ would produce torus edges
whose endpoints coincide.
We omit them because the torus measure in
\eqref{eq:periodic-measure-new} includes only interactions
between distinct sites.

For an edge $e$, write $\operatorname{end}(e)$ for its
unordered pair of endpoints.
Let $\widetilde H_{R,L}$ have vertex set $\mathbb Z^d$.
For every $y\in\mathbb Z^d$ and $z\in\mathcal D_{R,L}$,
introduce an edge $e(y,z)$ with label $(y,z)$ and define
\begin{equation}
\label{eq:cover-edge-definition}
\operatorname{end}(e(y,z)):=\{y,y+z\},
\qquad
K_{e(y,z)}:=\beta J(z).
\end{equation}
Let $H_{R,L}$ have vertex set $\mathbb T_L^d$.
For every $\bar y\in\mathbb T_L^d$ and
$z\in\mathcal D_{R,L}$, introduce an edge
$\bar e(\bar y,z)$ with label $(\bar y,z)$ and define
\begin{equation}
\label{eq:quotient-edge-definition}
\operatorname{end}(\bar e(\bar y,z))
:=\{\bar y,\bar y+\bar z\},
\qquad
K_{\bar e(\bar y,z)}:=\beta J(z).
\end{equation}
Edges with different labels remain distinct even if their
unordered endpoint pairs agree.
Thus $H_{R,L}$ is a multigraph: it may have several edges
between the same two vertices.
Neither graph has loops.

Define the projection on vertices and edges by
\begin{equation}
\label{eq:periodic-labeled-cover}
\pi_L(y):=\bar y,
\qquad
\pi_L(e(y,z)):=\bar e(\bar y,z).
\end{equation}
The edges incident to $y$ are
\[
\{e(y,z),e(y-z,z):z\in\mathcal D_{R,L}\},
\]
and the edges incident to $\bar y$ are
\[
\{\bar e(\bar y,z),\bar e(\bar y-\bar z,z):
z\in\mathcal D_{R,L}\}.
\]
The map $\pi_L$ gives a bijection between the edges incident
to $y$ and the edges incident to $\bar y$.
It also preserves the edge couplings:
$K_e=K_{\pi_L(e)}$.
We call $\pi_L$ a coupling-preserving graph covering.

We next check that $H_{R,L}$ gives the Ising measure with
interaction $J_L^{(R)}$.
Fix distinct $\bar a,\bar b\in\mathbb T_L^d$ and choose
representatives $a,b\in\mathbb Z^d$.
Each displacement $w=b-a+Lu$ with $J^{(R)}(w)>0$
corresponds to exactly one edge between $\bar a$ and $\bar b$:
it is $\bar e(\bar a,w)$ if $w\in\mathcal D_R$, and
$\bar e(\bar b,-w)$ if $-w\in\mathcal D_R$.
Every edge between $\bar a$ and $\bar b$ arises in this way.
Consequently,
\begin{equation}
\label{eq:quotient-coupling-identity}
\begin{aligned}
\sum_{\substack{\bar e\in E(H_{R,L})\\
\operatorname{end}(\bar e)=\{\bar a,\bar b\}}}
K_{\bar e}
&=\beta\sum_{u\in\mathbb Z^d}J^{(R)}(b-a+Lu)\\
&=\beta J_L^{(R)}(\bar b-\bar a).
\end{aligned}
\end{equation}
All edges between $\bar a$ and $\bar b$ contribute to the
coefficient of $\sigma_{\bar a}\sigma_{\bar b}$ in the
Ising exponent.
Equation~\eqref{eq:quotient-coupling-identity} therefore
shows that the zero-field Ising measure on $H_{R,L}$ is
exactly the truncated torus measure whose expectation is
$\langle\cdot\rangle_{L,\beta}^{\mathrm{per},(R)}$.

We now define random currents on a finite labeled multigraph
$H=(V_H,E_H)$, where $V_H$ is the vertex set and $E_H$ is
the edge set.
Assume that $H$ has no loops and has couplings $K_e\geq0$.
Parallel edges are treated as distinct elements of $E_H$.
A current assigns a nonnegative integer $n(e)$ to each edge:
$n:E_H\to\mathbb Z_{\geq0}$.
Its weight, support, and source set are
\begin{equation}
\label{eq:current-definitions}
\begin{aligned}
w_H(n)&:=\prod_{e\in E_H}\frac{K_e^{n(e)}}{n(e)!},
\qquad
\supp n:=\{e\in E_H:n(e)>0\},\\
\partial n&:=\left\{v\in V_H:
\sum_{e\ni v}n(e)\equiv1\pmod2\right\}.
\end{aligned}
\end{equation}
Thus a source is a vertex where the sum of the incident
current values is odd.
We use $0^0=1$ in the weight, so an edge with $K_e=0$
must have $n(e)=0$ for the current to have positive weight.
For $A\subset V_H$, define
\begin{equation}
\label{eq:current-partition-sum}
Z_H^A:=\sum_{\partial n=A}w_H(n).
\end{equation}
Since every edge has two endpoints, $|\partial n|$ is even.
Hence $Z_H^A=0$ if $|A|$ is odd.
The zero current has weight one, so $Z_H^\varnothing\geq1$.

Write $\langle\cdot\rangle_H$ for expectation under the
zero-field Ising measure on $H$ with couplings $(K_e)$,
with all spins summed over $\{-1,+1\}^{V_H}$.
For an edge $e$ with endpoints $u,v$, expand
\[
e^{K_e\sigma_u\sigma_v}
=\sum_{n(e)=0}^{\infty}
\frac{K_e^{n(e)}}{n(e)!}(\sigma_u\sigma_v)^{n(e)}.
\]
Multiply the edge expansions and the spin product
$\prod_{v\in A}\sigma_v$.
For a fixed current $n$, the sum over all spin configurations
is zero unless $\partial n=A$; when $\partial n=A$, the
spin sum is $2^{|V_H|}$.
Applying the same expansion to the normalizing constant gives
the random-current identity \citep{Aizenman1982}
\begin{equation}
\label{eq:current-correlation}
\left\langle\prod_{v\in A}\sigma_v\right\rangle_H
=\frac{Z_H^A}{Z_H^\varnothing}.
\end{equation}
For two currents $m,n$ on $H$, a vertex is a source of
$m+n$ exactly when it is a source of one of $m,n$ and not
the other.
Thus
\begin{equation}
\label{eq:source-symmetric-difference}
\partial(m+n)=\partial m\mathbin{\triangle}\partial n,
\end{equation}
where $\triangle$ denotes symmetric difference.
For a current $k$ and vertices $o,z\in V_H$, write
$o\xleftrightarrow{k}z$ when $o$ and $z$ are connected by
a path of edges $e$ with $k(e)>0$.

\begin{lemma}[Ordinary source switching]
\label{lem:ordinary-switching}
Let $A,D\subset V_H$ have even cardinality, and let
$o,z\in V_H$ be distinct.
For every nonnegative function
$F:\mathbb Z_{\geq0}^{E_H}\to[0,\infty)$,
\begin{align}
&\sum_{\substack{\partial m=A\\\partial n=D}}
w_H(m)w_H(n)F(m+n)
\mathbf1_{\{o\xleftrightarrow{m+n}z\}}
\notag\\
&\quad=
\sum_{\substack{\partial m=A\triangle\{o,z\}\\
\partial n=D\triangle\{o,z\}}}
w_H(m)w_H(n)F(m+n)
\mathbf1_{\{o\xleftrightarrow{m+n}z\}}.
\label{eq:ordinary-switching}
\end{align}
\end{lemma}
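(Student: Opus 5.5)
The plan is to prove \eqref{eq:ordinary-switching} by the classical Griffiths--Hurst--Sherman / Aizenman switching bijection, carried out on the multigraph $H$. Parallel edges are distinct elements of $E_H$, so nothing changes relative to the simple-graph case. First I will rewrite both sides by grouping pairs $(m,n)$ according to the sum current $k:=m+n$. For fixed $k$,
\[
w_H(m)w_H(n)=\prod_{e}\frac{K_e^{k(e)}}{m(e)!\,n(e)!}
=w_H(k)\prod_e\binom{k(e)}{m(e)}.
\]
Because $F(m+n)\mathbf1_{\{o\xleftrightarrow{m+n}z\}}$ depends only on $k$, each side becomes
\[
\sum_{k}w_H(k)F(k)\mathbf1_{\{o\xleftrightarrow{k}z\}}\,N_k(\cdot),
\]
where $N_k(A)$ denotes the number of ways to choose $m\leq k$ (together with binomial multiplicities) such that $\partial m=A$ and $\partial(k-m)=D$. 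By \eqref{eq:source-symmetric-difference}, the constraint $\partial(k-m)=D$ is equivalent to $\partial k=A\triangle D$, given $\partial m=A$. The same condition $\partial k=A\triangle D=(A\triangle\{o,z\})\triangle(D\triangle\{o,z\})$ appears on both sides, so the outer sums range over the same $k$.

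The combinatorial reformulation is as follows. Replace each edge $e$ by $k(e)$ parallel copies, which gives an auxiliary multigraph $\mathcal M_k$. Choosing $m$ with multiplicity $\prod_e\binom{k(e)}{m(e)}$ is then the same as choosing a subset $S$ of edges of $\mathcal M_k$, where $m(e)$ is the number of copies of $e$ in $S$, and $\partial m$ is the set of odd-degree vertices of $S$. It therefore suffices to show that, whenever $o$ and $z$ are connected in $\mathcal M_k$ (equivalently $o\xleftrightarrow{k}z$), the number of subsets $S$ with $\partial S=A$ equals the number with $\partial S=A\triangle\{o,z\}$.

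For this, fix a path $\gamma$ in $\mathcal M_k$ from $o$ to $z$, which exists by connectivity. The map $S\mapsto S\triangle\gamma$ is an involution on the subsets of $E(\mathcal M_k)$. Since $\partial(S\triangle\gamma)=\partial S\triangle\partial\gamma=\partial S\triangle\{o,z\}$ (parity of degrees adds mod $2$, and $o\ne z$), this map is a bijection between the two families. Hence $N_k(A)=N_k(A\triangle\{o,z\})$ for every $k$ in the support of the indicator, and summing over $k$ gives \eqref{eq:ordinary-switching}.

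The only step needing care is the bookkeeping identity converting $w_H(m)w_H(n)$ into $w_H(k)\prod_e\binom{k(e)}{m(e)}$ and its interpretation as counting subsets. Here I would check the convention $0^0=1$ (edges with $K_e=0$ force $k(e)=0$ for positive weight), and check that the path chosen in $\mathcal M_k$ only uses copies of edges with $k(e)>0$. All sums are of nonnegative terms and, for $K_e>0$, may be infinite; Tonelli justifies the regrouping.
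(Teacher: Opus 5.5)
Your proposal is correct and is essentially the paper's own argument. The paper also groups by $k=m+n$ and uses $w_H(m)w_H(n)=w_H(k)\prod_e\binom{k(e)}{n(e)}$ to read the multiplicities as red/blue colorings of $k(e)$ distinguishable units. It then reverses the color of unit $1$ on each edge of a simple $o$--$z$ path in $\supp k$, which is exactly your involution $S\mapsto S\triangle\gamma$ on $\mathcal M_k$ (your remark that both sides reduce to the single constraint $\partial k=A\triangle D$ is just a tidy way of matching the outer sums).
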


\begin{proof}[Proof of \cref{lem:ordinary-switching}]
All summands in \eqref{eq:ordinary-switching} are nonnegative,
so we may group both sums by the total current $k=m+n$.
If $o$ and $z$ are not connected in $(V_H,\supp k)$,
the connectivity indicator is zero.
Fix a current $k$ for which $o\xleftrightarrow{k}z$.

For every edge $e$, represent $k(e)$ by $k(e)$ distinguishable
units, numbered $1,\ldots,k(e)$.
Color every unit red or blue, and let $m(e)$ and $n(e)$
be the numbers of red and blue units on $e$, respectively.
A fixed pair $(m,n)$ with $m+n=k$ is produced by exactly
\[
\prod_{e\in E_H}\binom{k(e)}{n(e)}
\]
colorings.
By \eqref{eq:current-definitions},
\[
w_H(m)w_H(n)
=w_H(k)\prod_{e\in E_H}\binom{k(e)}{n(e)}.
\]
Assign weight $w_H(k)$ to each coloring.
The sum of the coloring weights over all colorings that
produce $(m,n)$ is then exactly $w_H(m)w_H(n)$.

Choose a simple path from $o$ to $z$ in $(V_H,\supp k)$
by a rule that depends only on $k$.
On every edge of this path, reverse the color of unit $1$.
Let $m'$ and $n'$ be the resulting red and blue currents.
Each interior vertex of the path is incident to two
recolored units, so the parity of its incident red total
and the parity of its incident blue total are unchanged.
At each of $o$ and $z$, exactly one unit is recolored,
so both parities change.
No unit incident to a vertex outside the path is recolored.
Consequently,
\[
\partial m'=\partial m\mathbin{\triangle}\{o,z\},
\qquad
\partial n'=\partial n\mathbin{\triangle}\{o,z\}.
\]

The total current remains $k$.
The chosen path and its units numbered $1$ therefore remain
the same after recoloring.
Applying the color change twice returns the original coloring.
We thus obtain a bijection between colorings with source sets
$(A,D)$ and colorings with source sets
$(A\triangle\{o,z\},D\triangle\{o,z\})$.
Every coloring has the same factor $w_H(k)F(k)$ before
and after the color change.
Hence the contributions from the fixed current $k$ to
the two sides of \eqref{eq:ordinary-switching} are equal.
Summing over $k$ proves \eqref{eq:ordinary-switching}.
\end{proof}

We next use an exploration to apply
\cref{lem:ordinary-switching} to currents on two different graphs.
Let $\pi:\widetilde H\to H$ be a coupling-preserving covering
of labeled graphs without loops.
The graph $H$ is finite, and $\widetilde H$ may be infinite.
The edge couplings are nonnegative.
Parallel edges retain their distinct labels.
At every vertex $v\in V(\widetilde H)$, the map $\pi$
gives a bijection from the edges incident to $v$ onto the
edges incident to $\pi(v)$, with $K_e=K_{\pi(e)}$.
A vertex $v$ is a lift of $\bar v$ if $\pi(v)=\bar v$.
An edge $e$ is a lift of $\bar e$ if $\pi(e)=\bar e$.

Fix $o\in V(\widetilde H)$, write $\bar o:=\pi(o)$,
and set $n_H:=|V(H)|$.
Let $B$ be a finite subgraph of $\widetilde H$ containing
the subgraph induced by
\begin{equation}
\label{eq:exploration-cover-ball}
\mathbb B_{\widetilde H}(o,n_H)
:=\{v\in V(\widetilde H):
\operatorname{dist}_{\widetilde H}(o,v)\leq n_H\}.
\end{equation}
Here graph distance is the minimum number of edges in a path
between the two vertices.
The distance is $\infty$ if no such path exists.
For a current $m$ on $B$ and a current $n$ on $H$, define
\begin{equation}
\label{eq:lift-current}
\widehat n(e):=n(\pi(e)),
\qquad e\in E(B),
\end{equation}
and
\begin{equation}
\label{eq:exploration-open-edge}
\omega_e(m,n)
:=\mathbf1_{\{m(e)+\widehat n(e)>0\}},
\qquad e\in E(B).
\end{equation}
An edge is open when $\omega_e(m,n)=1$ and closed when
$\omega_e(m,n)=0$.
Equation~\eqref{eq:lift-current} copies $n(\bar e)$ to
every edge $e\in E(B)$ with $\pi(e)=\bar e$.
The map $n\mapsto\widehat n$ need not preserve current
weights or source sets.
We use $\widehat n$ only to define the edge states in
\eqref{eq:exploration-open-edge}.

The exploration selects a connected subgraph of $B$.
It keeps at most one lift of each vertex of $H$ and queries
at most one lift of each labeled edge of $H$.
Fix a total order on $V(B)$ and a total order on the labeled
edges of $B$ incident to each vertex.
The orders do not depend on $m$ or $n$.
Each vertex has one of four states: unseen, active, processed,
or gray.
Active and processed vertices are called selected.
Gray vertices cannot be selected.
Once a vertex is gray or processed, its state is permanent.

\begin{enumerate}
\item Declare $o$ active and every other lift of $\bar o$
in $B$ gray.
All remaining vertices are unseen, and no edge has been queried.

\item Let $v$ be the first active vertex in the fixed order.
Scan the edges of $B$ incident to $v$ in their fixed order.
For each edge $e$ in this scan, write
$\operatorname{end}(e)=\{v,w\}$.
Skip $e$ if $w$ is gray or if an edge $e'\in E(B)$
with $\pi(e')=\pi(e)$ has already been queried.
Previous queries include both open and closed edges.
Otherwise, $e$ is eligible, and we query $\omega_e(m,n)$.

If $\omega_e(m,n)=0$, record $e$ as tested closed and
leave all vertex states unchanged.
If $\omega_e(m,n)=1$, record $e$ as tested open.
If $\omega_e(m,n)=1$ and no selected vertex projects to
$\pi(w)$, declare $w$ active and every other still-unseen
lift of $\pi(w)$ in $B$ gray.
An open edge is recorded even when $w$ is already selected.
After scanning all incident edges, declare $v$ processed.

\item If any active vertices remain, return to step~2.
Otherwise, stop.
\end{enumerate}

At termination, let $\mathcal V_o(m,n)$ be the set of
processed vertices, and let $\mathcal E_{\mathrm o}(m,n)$
and $\mathcal E_{\mathrm c}(m,n)$ be the sets of tested
open and tested closed edges, respectively.
Define the explored graph by
\begin{equation}
\label{eq:exploration-output}
\mathcal X_o(m,n)
:=\bigl(\mathcal V_o(m,n),\mathcal E_{\mathrm o}(m,n)\bigr).
\end{equation}
The vertex $o$ belongs to $\mathcal V_o(m,n)$ even if
no edge is tested open.
The tested closed edges are recorded in
$\mathcal E_{\mathrm c}(m,n)$ but are not edges of
$\mathcal X_o(m,n)$.
We omit $(m,n)$ when the currents have been fixed.
A pair $(E_{\mathrm o},E_{\mathrm c})$ of subsets of
$E(B)$ is an admissible exploration output if
\[
(E_{\mathrm o},E_{\mathrm c})
=\bigl(\mathcal E_{\mathrm o}(m,n),
\mathcal E_{\mathrm c}(m,n)\bigr)
\]
for some currents $m$ on $B$ and $n$ on $H$.

A concrete run of the algorithm is shown in
\Cref{fig:cover-exploration-example}.  Take the nearest-neighbor cover
$\Z\to\Z/5\Z$, and let $B$ be the subgraph induced by
$[-5,5]\cap\Z$.  Let $n$ carry one current unit on each edge of the
quotient path
\[
 \bar0-\bar4-\bar3-\bar2,
\]
and no current on the other two quotient edges, and let $m$ carry
one unit on $\{0,1\}$ and vanish on every other edge of $B$.  Choose the
incident-edge order at $0$ so that $\{0,1\}$ is queried before
$\{-1,0\}$.

Starting from $o=0$, the first two queries, both of which reveal open edges, select $1$ as the
representative of $\bar1$ and $-1$ as the representative of $\bar4$.
Choose the vertex order so that $1$ is processed before $-1$.  The query of
$\{1,2\}$ then reveals that this edge is closed.  The exploration subsequently follows the two
open lifts $\{-2,-1\}$ and $\{-3,-2\}$, thereby selecting the
representatives $-2$ of $\bar3$ and $-3$ of $\bar2$.  At that point $2$ and
$-4$ are gray, because their quotient vertices are already represented.  In
particular, the competing lift $\{-4,-3\}$ of the quotient edge
$\{\bar1,\bar2\}$ is not queried.

\begin{figure}[htbp]
\centering
\begin{tikzpicture}[
  selected/.style={
    circle,draw=black,fill=black,text=white,
    minimum size=5.6mm,inner sep=0pt,font=\footnotesize},
  active/.style={
    circle,draw=black,double,
    minimum size=5.6mm,inner sep=0pt,font=\footnotesize},
  pending/.style={
    circle,draw=black,
    minimum size=5.6mm,inner sep=0pt,font=\footnotesize},
  gray/.style={
    circle,draw=black!45,fill=black!12,text=black!55,
    minimum size=5.6mm,inner sep=0pt,font=\footnotesize},
  quotient/.style={
    circle,draw=black,
    minimum size=5.6mm,inner sep=0pt,font=\footnotesize},
  testedopen/.style={draw=black,line width=1.15pt},
  testedclosed/.style={draw=black,dashed,line width=.9pt},
  unqueried/.style={draw=black!45,densely dotted,line width=.8pt},
  projection/.style={draw=black!45,densely dashed},
  every node/.style={font=\footnotesize}
]
\node at (-3.65,1.55) {\textnormal{(a) After processing \(o=0\)}};
\node[pending]  (am2) at (-5.65,.55) {$-2$};
\node[active]   (am1) at (-4.65,.55) {$-1$};
\node[selected] (a0)  at (-3.65,.55) {$0$};
\node[active]   (a1)  at (-2.65,.55) {$1$};
\node[pending]  (a2)  at (-1.65,.55) {$2$};
\draw[unqueried]  (am2)--(am1);
\draw[testedopen] (am1)--node[above,fill=white,inner sep=1pt]
  {\scriptsize 2} (a0);
\draw[testedopen] (a0)--node[above,fill=white,inner sep=1pt]
  {\scriptsize 1} (a1);
\draw[unqueried]  (a1)--(a2);

\node at (3.1,1.55)
  {\textnormal{(b) Final explored graph and projection}};
\node[gray]     (fm4) at (.1,.55) {$-4$};
\node[selected] (fm3) at (1.1,.55) {$-3$};
\node[selected] (fm2) at (2.1,.55) {$-2$};
\node[selected] (fm1) at (3.1,.55) {$-1$};
\node[selected] (f0)  at (4.1,.55) {$0$};
\node[selected] (f1)  at (5.1,.55) {$1$};
\node[gray]     (f2)  at (6.1,.55) {$2$};
\draw[unqueried] (fm4)--(fm3);
\draw[testedopen] (fm3)--node[above,fill=white,inner sep=1pt]
  {\scriptsize 5} (fm2);
\draw[testedopen] (fm2)--node[above,fill=white,inner sep=1pt]
  {\scriptsize 4} (fm1);
\draw[testedopen] (fm1)--node[above,fill=white,inner sep=1pt]
  {\scriptsize 2} (f0);
\draw[testedopen] (f0)--node[above,fill=white,inner sep=1pt]
  {\scriptsize 1} (f1);
\draw[testedclosed] (f1)--node[above,fill=white,inner sep=1pt]
  {\scriptsize 3} (f2);

\node[quotient] (q2) at (1.1,-1.05) {$\bar2$};
\node[quotient] (q3) at (2.1,-1.05) {$\bar3$};
\node[quotient] (q4) at (3.1,-1.05) {$\bar4$};
\node[quotient] (q0) at (4.1,-1.05) {$\bar0$};
\node[quotient] (q1) at (5.1,-1.05) {$\bar1$};
\draw[projection] (fm3)--(q2);
\draw[projection] (fm2)--(q3);
\draw[projection] (fm1)--(q4);
\draw[projection] (f0)--(q0);
\draw[projection] (f1)--(q1);
\draw[testedopen] (q2)--(q3)--(q4)--(q0)--(q1);
\draw[testedclosed] (q1) to[bend left=32] (q2);

\draw[testedopen] (-4.8,-2.05)--(-4.1,-2.05);
\node[anchor=west,font=\scriptsize] at (-4.0,-2.05) {tested open};
\draw[testedclosed] (-1.35,-2.05)--(-.65,-2.05);
\node[anchor=west,font=\scriptsize] at (-.55,-2.05) {tested closed};
\node[gray] at (2.5,-2.05) {\strut};
\node[anchor=west,font=\scriptsize] at (2.88,-2.05) {competing lift};
\end{tikzpicture}
\caption{An exploration over a five-cycle.  The numbers give the query
order.  Thick edges are tested open, dashed edges are tested closed, and
dotted edges have not been queried in the displayed state.
In (a), double circles mark active vertices, and the black vertex
has been processed.  In (b), the black vertices form the final graph $\mathcal X_o$, while graying discards
competing lifts.  The projection lines show that each quotient
vertex is represented once and each quotient edge is queried once.}
\label{fig:cover-exploration-example}
\end{figure}

We now establish the properties of the exploration needed for
\cref{lem:mixed-switching}.

\begin{lemma}[Properties of the exploration]
\label{lem:cover-exploration-interfaces}
The exploration terminates and has the following properties.
{\renewcommand{\labelenumi}{\textnormal{(X\arabic{enumi})}}
\begin{enumerate}
\item $\mathcal X_o$ is connected, contains $o$, and contains
at most one lift of each vertex of $H$.

\item The map $\pi$ is injective on
$\mathcal E_{\mathrm o}\cup\mathcal E_{\mathrm c}$.
Its restriction to $\mathcal X_o$ is a coupling-preserving
labeled graph isomorphism onto $\pi(\mathcal X_o)$.
For every admissible output $(E_{\mathrm o},E_{\mathrm c})$,
\begin{equation}
\label{eq:exploration-measurability}
\begin{aligned}
&\{\mathcal E_{\mathrm o}=E_{\mathrm o},
\mathcal E_{\mathrm c}=E_{\mathrm c}\}\\
&\qquad=
\{\omega_e=1\ (e\in E_{\mathrm o}),
\omega_e=0\ (e\in E_{\mathrm c})\}.
\end{aligned}
\end{equation}

\item For every $e\in\mathcal E_{\mathrm c}$,
\[
n(\pi(e))=0.
\]

\item If $\partial n=\{\bar o,\bar x\}$ with
$\bar x\ne\bar o$, then $\mathcal X_o$ contains exactly
one vertex $x^\star$ satisfying $\pi(x^\star)=\bar x$.
\end{enumerate}}
\end{lemma}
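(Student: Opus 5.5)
The plan is to verify the four properties directly from the rules of the exploration algorithm. Termination, connectivity and the bookkeeping come first; the source statement (X4) needs a separate parity argument.

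\emph{Termination and (X1).} $B$ is finite. Each vertex becomes active at most once, since its state changes only from unseen to active to processed, or to gray. Each processing step scans finitely many edges. Hence the algorithm stops after finitely many steps. The vertex $o$ is selected at the start and therefore lies in $\mathcal V_o$. Every other selected vertex $w$ is declared active through a tested-open edge $e$ whose other endpoint $v$ is already selected. By induction on the time of selection, every processed vertex is joined to $o$ by tested-open edges between selected vertices. To make $\mathcal X_o$ literally a graph on $\mathcal V_o$, I will check that every tested-open edge has both endpoints selected. An edge is queried only if its far endpoint $w$ is not gray. If $w$ is unseen when the edge is found open and $\pi(w)$ is not yet represented, $w$ becomes active. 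If $\pi(w)$ is already represented by a selected vertex $w'\ne w$, then $w$ would have been grayed when $w'$ was selected, unless $w$ was itself selected, so $w=w'$. The same rule of graying every other lift at the moment a lift is selected shows that each vertex of $H$ has at most one selected lift. One more check is needed for $\mathcal X_o$ to lie inside $B$: its at most $n_H$ vertices lie within graph distance $n_H-1$ of $o$, so they sit inside $\mathbb B_{\widetilde H}(o,n_H)$. This guarantees that every edge of $\widetilde H$ incident to a selected vertex belongs to $B$.

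\emph{(X2) and (X3).} Injectivity of $\pi$ on $\mathcal E_{\mathrm o}\cup\mathcal E_{\mathrm c}$ is immediate, because an edge is skipped whenever a lift of its projection has already been queried. Since both endpoints of tested-open edges are selected and $\pi$ is injective on selected vertices, $\pi$ restricted to $\mathcal X_o$ is a bijection on vertices and on edges. It preserves incidence, labels and couplings because $\pi$ is a coupling-preserving covering, so it is an isomorphism onto its image. For the measurability identity \eqref{eq:exploration-measurability}, note that the algorithm is deterministic given the answers to its queries. Two pairs $(m,n)$ that give the same answers on the queried edges therefore produce identical runs. The left-hand event is then contained in the right-hand one by definition. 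For the converse, take a pair $(m,n)$ in the right-hand event and compare its run with the run of a pair realizing the admissible output $(E_{\mathrm o},E_{\mathrm c})$. An induction on the query sequence shows the two runs agree, which gives equality. For (X3): if $e$ is tested closed, then $\omega_e=0$, so $\widehat n(e)=n(\pi(e))=0$.

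\emph{(X4), the main obstacle.} The task is to show that $\bar x$ is represented in $\mathcal X_o$; uniqueness then follows from (X1). Let $\bar{\mathcal V}:=\pi(\mathcal V_o)$ and suppose, for contradiction, that $\bar x\notin\bar{\mathcal V}$. I will consider an edge $\bar e$ of $H$ with exactly one endpoint in $\bar{\mathcal V}$, say $\bar v=\pi(v)$ with $v$ selected. By the covering bijection, $\bar e$ has a unique lift $e$ at $v$, and $e$ lies in $B$ by the ball argument above. When $v$ was processed, $e$ was either queried or skipped. If it was skipped because some other lift $e'$ of $\bar e$ had already been queried, then $e'$ is incident to a selected vertex, namely a lift of $\bar v$ or of the other endpoint. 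By (X1) and the fact that the other endpoint is not represented, $e'$ is incident to $v$ itself, so $e'=e$ by the bijection at $v$; this rules out the skip. If it was skipped because its far endpoint $w$ was gray, then $\pi(w)$ is represented, contradicting that $\bar e$ has only one endpoint in $\bar{\mathcal V}$. So $e$ was queried. It cannot have been found open: an open edge to an unrepresented vertex would have made that lift active. Hence $e$ is tested closed, and $n(\bar e)=0$ by (X3). Every edge of $H$ leaving $\bar{\mathcal V}$ therefore carries zero $n$-current. Summing the parities $\sum_{\bar e\ni\bar u}n(\bar e)$ over $\bar u\in\bar{\mathcal V}$ counts each internal edge twice and each boundary edge once, so the total is even. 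Thus $|\partial n\cap\bar{\mathcal V}|$ is even. But $\partial n\cap\bar{\mathcal V}=\{\bar o\}$, a contradiction. Hence $\bar x\in\bar{\mathcal V}$. The delicate point is the case analysis for skipped edges, which relies on the graying rule and the local bijectivity of $\pi$.
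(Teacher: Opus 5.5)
Your proposal is correct. For termination, (X1)--(X3) and the measurability identity it follows the paper's argument. You also verify something the paper leaves implicit: the far endpoint of every tested-open edge is itself the selected lift, not merely a vertex whose projection is represented. This is what makes $\mathcal X_o=(\mathcal V_o,\mathcal E_{\mathrm o})$ a graph on $\mathcal V_o$. The paper relies on it later, when it takes $V_0=\{o\}\cup\bigcup_{e\in E_0}\operatorname{end}(e)$ as the explored vertex set.

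For (X4) your route is genuinely different. The paper first uses the handshaking lemma to put $\bar o$ and $\bar x$ in one component of $(V(H),\supp n)$. It then fixes a simple path in $\supp n$ and inducts along it. When the selected lift $v_i$ scans the lift $e_i$ of $\bar e_i$, either $\bar v_{i+1}$ is already represented, or no lift of $\bar e_i$ was queried earlier. An earlier query would have been open because $n(\bar e_i)>0$, and an open query represents both projected endpoints. So $e_i$ is queried open and selects a lift of $\bar v_{i+1}$.

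You argue by a cut instead. Every edge of $H$ with exactly one endpoint in $\pi(\mathcal V_o)$ has a tested-closed lift, so $n$ vanishes on the edge boundary of $\pi(\mathcal V_o)$. The parity count over $\pi(\mathcal V_o)$ then rules out $\partial n\cap\pi(\mathcal V_o)=\{\bar o\}$.

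Both proofs rest on the same local observation and the same parity principle. Your boundary step never uses the source condition, so it shows more: $\pi(\mathcal V_o)$ is a union of components of $(V(H),\supp n)$ for every current $n$, and no path has to be chosen. The paper's version handles earlier queries more cheaply, since it never identifies which lift was queried. Your skip case, by contrast, needs uniqueness of selected lifts and the covering bijection at $v$ to show the queried lift is $e$ itself. You can shorten that case the same way the paper does: a previously queried lift of $\bar e$ is either open, which would represent both endpoints, or closed, which gives $n(\bar e)=0$ directly by (X3).
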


\begin{proof}[Proof of \cref{lem:cover-exploration-interfaces}]
\emph{Termination and (X1).}
Initially, $o$ is the only selected vertex.
A new vertex is selected only if its image under $\pi$
has no selected representative.
All other unseen lifts of that image are then made gray,
and gray vertices cannot be selected.
Thus there is at most one selected lift of each vertex
of $H$ at every stage.
At most $n_H$ vertices can therefore be selected.
Every processing step changes one active vertex to processed,
and a processed vertex never becomes active again.
Since $B$ is finite, every scan examines finitely many edges.
The exploration therefore terminates after at most $n_H$
processing steps.

Every selected vertex other than $o$ is reached by a tested
open edge from a previously selected vertex.
At termination, all selected vertices are processed.
Hence $\mathcal X_o$ is connected, contains $o$, and
contains at most one lift of each vertex of $H$.

We also record why the choice of $B$ includes every edge
that may need to be scanned.
Each $v\in\mathcal V_o$ can be joined to $o$ by a simple
path in $\mathcal X_o$.
The path has at most $n_H-1$ edges, since its vertices
have distinct images in $H$.
Thus
\[
\operatorname{dist}_{\widetilde H}(o,v)\leq n_H-1,
\qquad v\in\mathcal V_o.
\]
If an edge $e$ of $\widetilde H$ has endpoints $v,w$
with $v\in\mathcal V_o$, then
$\operatorname{dist}_{\widetilde H}(o,w)\leq n_H$.
Both endpoints of $e$ lie in
$\mathbb B_{\widetilde H}(o,n_H)$.
Since $B$ contains the subgraph induced by
$\mathbb B_{\widetilde H}(o,n_H)$, we have $e\in E(B)$.

\emph{Property (X2).}
Once an edge has been queried, the algorithm skips every
edge with the same image under $\pi$.
Thus $\pi$ is injective on
$\mathcal E_{\mathrm o}\cup\mathcal E_{\mathrm c}$.
Property (X1) gives injectivity on $\mathcal V_o$.
Since the covering preserves endpoints and couplings,
$\pi$ restricts to a coupling-preserving labeled graph
isomorphism from $\mathcal X_o$ onto $\pi(\mathcal X_o)$.

Fix an admissible output $(E_{\mathrm o},E_{\mathrm c})$.
If the exploration produces this output, it has found
$\omega_e=1$ for $e\in E_{\mathrm o}$ and
$\omega_e=0$ for $e\in E_{\mathrm c}$.
This proves the forward inclusion in
\eqref{eq:exploration-measurability}.
For the reverse inclusion, choose a pair of currents whose
exploration produces $(E_{\mathrm o},E_{\mathrm c})$.
Such a pair exists by admissibility.
Consider another pair of currents with the same edge states
on $E_{\mathrm o}\cup E_{\mathrm c}$.
The two runs start from the same vertex states and use
the same vertex and edge orders.
Whenever their previous queries and answers agree,
their vertex states and query records agree.
They therefore skip the same edges and make the same
next query, or both stop.
The next queried edge belongs to
$E_{\mathrm o}\cup E_{\mathrm c}$, so its answer also agrees.
Induction over the queries shows that both runs produce
$(E_{\mathrm o},E_{\mathrm c})$.
This proves \eqref{eq:exploration-measurability}.

\emph{Property (X3).}
For $e\in\mathcal E_{\mathrm c}$,
\eqref{eq:exploration-open-edge} and \eqref{eq:lift-current}
give
\[
m(e)+n(\pi(e))=0.
\]
Both terms are nonnegative, so $n(\pi(e))=0$.

\emph{Property (X4).}
Suppose that $\partial n=\{\bar o,\bar x\}$ and
$\bar x\ne\bar o$.
Count every edge $e$ with multiplicity $n(e)$.
The handshaking lemma shows that each connected component
of $(V(H),\supp n)$ contains an even number of sources.
Thus $\bar o$ and $\bar x$ lie in the same component.
Choose a simple labeled path
\[
\bar o=\bar v_0,\bar e_0,\bar v_1,\ldots,
\bar e_{\ell-1},\bar v_\ell=\bar x
\]
in $(V(H),\supp n)$, where
$\operatorname{end}(\bar e_i)=\{\bar v_i,\bar v_{i+1}\}$
and $n(\bar e_i)>0$.

We prove by induction on $i$ that a lift of $\bar v_i$
is selected.
The lift $o$ of $\bar v_0$ is selected at initialization.
Suppose that a lift $v_i$ of $\bar v_i$ is selected.
The covering property gives a unique edge $e_i$ incident
to $v_i$ with $\pi(e_i)=\bar e_i$.
Let $w_i$ be its other endpoint, so
$\pi(w_i)=\bar v_{i+1}$.
The distance argument in the proof of (X1) gives
$e_i\in E(B)$.

Consider the scan of $e_i$ when $v_i$ is processed.
If a lift of $\bar v_{i+1}$ has already been selected,
there is nothing to prove for this induction step.
Otherwise, $w_i$ is unseen: it is not selected, and
a gray vertex always has a selected lift of the same
vertex of $H$.
If a lift of $\bar e_i$ had been queried earlier,
the query would have been open, since $n(\bar e_i)>0$.
After an open query, both projected endpoints have
selected representatives, including $\bar v_{i+1}$.
This contradicts the assumption that $\bar v_{i+1}$
has no selected lift.
Thus no lift of $\bar e_i$ has been queried earlier.
The edge $e_i$ is therefore eligible and open, and
the algorithm selects $w_i$.
Induction gives a selected lift of $\bar x$.
Property (X1) gives its uniqueness.
\end{proof}

We next use \eqref{eq:exploration-measurability} to identify
changes to the currents that preserve a specified exploration
output.
Fix an admissible output $(E_{\mathrm o},E_{\mathrm c})$
and write $\bar E_{\mathrm o}:=\pi(E_{\mathrm o})$.
Let $m_1,m_2$ be currents on $B$, and let $n_1,n_2$
be currents on $H$.
Suppose that
\begin{gather*}
m_1(e)=m_2(e),
\qquad e\in E(B)\setminus E_{\mathrm o},\\
n_1(\bar e)=n_2(\bar e),
\qquad \bar e\in E(H)\setminus\bar E_{\mathrm o},\\
m_1(e)+n_1(\pi(e))=m_2(e)+n_2(\pi(e)),
\qquad e\in E_{\mathrm o}.
\end{gather*}
The equality of the current sums on $E_{\mathrm o}$ gives
$\omega_e(m_1,n_1)=\omega_e(m_2,n_2)$ for
$e\in E_{\mathrm o}$.
Since the output is admissible, (X2) gives
$\pi(E_{\mathrm c})\cap\bar E_{\mathrm o}=\varnothing$.
For $e\in E_{\mathrm c}$, we have $e\notin E_{\mathrm o}$
and $\pi(e)\notin\bar E_{\mathrm o}$.
The equalities outside $E_{\mathrm o}$ and
$\bar E_{\mathrm o}$ therefore give
$\omega_e(m_1,n_1)=\omega_e(m_2,n_2)$ for
$e\in E_{\mathrm c}$ as well.
By \eqref{eq:exploration-measurability},
$(m_1,n_1)$ produces $(E_{\mathrm o},E_{\mathrm c})$
if and only if $(m_2,n_2)$ produces
$(E_{\mathrm o},E_{\mathrm c})$.

For $C\subset V(B)$, $A\subset V(H)$, and a nonnegative
function $F$ of a current $m$ on $B$ and a current $n$ on $H$,
define
\begin{equation}
\label{eq:mixed-current-functional}
\mathcal Z_{B,H}^{C,A}[F]
:=\sum_{\substack{\partial m=C\\\partial n=A}}
w_B(m)w_H(n)F(m,n).
\end{equation}
When $F\equiv1$, the two current sums separate:
\begin{equation}
\label{eq:product-current-factorization}
\mathcal Z_{B,H}^{C,A}[1]=Z_B^C Z_H^A.
\end{equation}

\begin{lemma}[Mixed switching on an explored lift]
\label{lem:mixed-switching}
Let $C\subset V(B)$ and $A\subset V(H)$ have even
cardinality.
For every $z\in V(B)$ with $z\ne o$ and $\pi(z)\ne\bar o$,
\begin{equation}
\label{eq:mixed-switching}
\begin{aligned}
&\mathcal Z_{B,H}^{C,A}
\bigl[\mathbf1_{\{z\in\mathcal V_o\}}\bigr]\\
&\qquad=
\mathcal Z_{B,H}^{C\triangle\{o,z\},
                    A\triangle\{\bar o,\pi(z)\}}
\bigl[\mathbf1_{\{z\in\mathcal V_o\}}\bigr].
\end{aligned}
\end{equation}
\end{lemma}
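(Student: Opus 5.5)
Decompose both sides of \eqref{eq:mixed-switching} according to the exploration output, then switch inside the explored lift. Every pair $(m,n)$ produces exactly one admissible output $(E_{\mathrm o},E_{\mathrm c})$, and the event $\{z\in\mathcal V_o\}$ is determined by that output. Indeed, $\mathcal V_o$ is the vertex set of the explored graph: $o$ together with the endpoints of $E_{\mathrm o}$ reached during the run. Hence each side of \eqref{eq:mixed-switching} equals a sum, over admissible outputs with $z$ among the selected vertices, of $\mathcal Z[\mathbf 1_{\{(\mathcal E_{\mathrm o},\mathcal E_{\mathrm c})=(E_{\mathrm o},E_{\mathrm c})\}}]$ with the appropriate source sets. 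It therefore suffices to fix one such output and build a weight-preserving bijection between pairs $(m,n)$ with sources $(C,A)$ producing it and pairs with sources $(C\triangle\{o,z\},A\triangle\{\bar o,\pi(z)\})$ producing it.

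For the fixed output, the explored graph $\mathcal X_o$ is connected and contains $o$ and $z$, so I choose by a fixed rule a simple path $\gamma$ from $o$ to $z$ in $\mathcal X_o$, using only edges of $E_{\mathrm o}$. By (X2), $\pi$ is injective on $E_{\mathrm o}$ and on $\mathcal V_o$. So $\pi(\gamma)$ is a simple path in $H$ from $\bar o$ to $\pi(z)$, with distinct edges and $\pi(z)\ne\bar o$. On each $e\in\gamma$ we have $m(e)+n(\pi(e))>0$.

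I then mimic the coloring proof of \cref{lem:ordinary-switching}. For each $e\in E_{\mathrm o}$, set $k(e):=m(e)+n(\pi(e))$ and regard the $k(e)$ units as colored red (belonging to $m$ on $e\in B$) or blue (belonging to $n$ on $\pi(e)\in H$). Since $K_e=K_{\pi(e)}$ and $\pi$ is injective on $E_{\mathrm o}$, we have
\[
\frac{K_e^{m(e)}}{m(e)!}\cdot\frac{K_{\pi(e)}^{n(\pi(e))}}{n(\pi(e))!}=\frac{K_e^{k(e)}}{k(e)!}\binom{k(e)}{m(e)}.
\]
Thus, summing over colorings of fixed total $k$, the coloring weight reproduces $w_B(m)w_H(n)$ restricted to $E_{\mathrm o}$. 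All currents outside $E_{\mathrm o}$ and $\bar E_{\mathrm o}$ are held fixed.

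The switch flips the color of unit $1$ on every edge of $\gamma$. This changes $m$ on $\gamma$ and $n$ on $\pi(\gamma)$ by $\pm1$ per edge, leaves each total $k(e)$ unchanged, and is an involution; the path depends only on the fixed output, so it is the same before and after. At interior vertices of $\gamma$, two incident units are flipped, so the red parity in $B$ is preserved. The blue parity at the image vertex in $H$ is also preserved, because injectivity of $\pi$ on $\mathcal V_o$ means the two flipped units of $\pi(\gamma)$ meet the same image vertex and no other flipped unit does. At the endpoints, exactly one unit is flipped, so $\partial m$ changes by $\{o,z\}$ and $\partial n$ changes by $\{\bar o,\pi(z)\}$.

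The step needing the most care is showing that the switched pair still produces the same output. This follows from the observation after \cref{lem:cover-exploration-interfaces}: $m$ is unchanged off $E_{\mathrm o}$, $n$ is unchanged off $\bar E_{\mathrm o}$, and the sums $m(e)+n(\pi(e))$ are unchanged on $E_{\mathrm o}$. One subtlety remains. An edge of $B$ outside $E_{\mathrm o}$ may be another lift of an edge of $\pi(\gamma)$; its lifted value $\widehat n$ changes, but this does not matter. By \eqref{eq:exploration-measurability}, the output depends only on the edge states on $E_{\mathrm o}\cup E_{\mathrm c}$, and $\pi(E_{\mathrm c})\cap\bar E_{\mathrm o}=\varnothing$, so those states are unaffected. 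Summing the resulting bijection over colorings, then over $k$ and the fixed outside currents, and finally over admissible outputs, proves \eqref{eq:mixed-switching}.
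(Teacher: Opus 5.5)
Your proposal is correct and follows essentially the paper's route: decompose by admissible exploration output, hold the currents off $E_{\mathrm o}$ and $\bar E_{\mathrm o}$ fixed, switch sources along a path in the explored lift, and get output preservation from \eqref{eq:exploration-measurability} together with $\pi(E_{\mathrm c})\cap\bar E_{\mathrm o}=\varnothing$. The only difference is cosmetic. The paper transports $n_{\mathrm{in}}$ to $\widetilde n_{\mathrm{in}}$ on $\mathsf G_0$ and invokes \cref{lem:ordinary-switching} as a black box with interior source sets $C_0,D_0$, whereas you inline its coloring argument directly on $E_{\mathrm o}$ with a path chosen in $\mathcal X_o$, which spares you the interior-source bookkeeping.
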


\begin{proof}[Proof of \cref{lem:mixed-switching}]
We group the current pairs by their exploration output
and by their currents outside the explored open edges.
For each group, we apply \cref{lem:ordinary-switching}
on the explored graph.

\emph{The graph determined by a fixed output.}
Fix an admissible output $(E_{\mathrm o},E_{\mathrm c})$
whose explored vertex set contains $z$.
Write
\[
E_0:=E_{\mathrm o},
\qquad
V_0:=\{o\}\cup\bigcup_{e\in E_0}\operatorname{end}(e),
\qquad
\bar E_0:=\pi(E_0).
\]
The set $V_0$ is the explored vertex set for this output.
By (X1) and (X2) of
\cref{lem:cover-exploration-interfaces}, the graph
$\mathsf G_0$ below is connected and contains $o$ and $z$,
and
\begin{equation}
\label{eq:explored-chart-isomorphism}
\begin{aligned}
\mathsf G_0&:=(V_0,E_0),
&\overline{\mathsf G}_0&:=(\pi(V_0),\bar E_0),\\
\pi|_{\mathsf G_0}&:\mathsf G_0
\longrightarrow\overline{\mathsf G}_0
\end{aligned}
\end{equation}
is a coupling-preserving labeled graph isomorphism.

Let $m_{\mathrm{in}}$ and $m_{\mathrm{out}}$ be the
restrictions of $m$ to $E_0$ and $E(B)\setminus E_0$.
Let $n_{\mathrm{in}}$ and $n_{\mathrm{out}}$ be the
restrictions of $n$ to $\bar E_0$ and
$E(H)\setminus\bar E_0$.
Extend each restriction by zero to its ambient edge set.
Then
\begin{equation}
\label{eq:current-inside-outside}
m=m_{\mathrm{in}}+m_{\mathrm{out}},
\qquad
n=n_{\mathrm{in}}+n_{\mathrm{out}}.
\end{equation}
Define a current on $\mathsf G_0$ by
\[
\widetilde n_{\mathrm{in}}(e)
:=n_{\mathrm{in}}(\pi(e)),
\qquad e\in E_0.
\]
The isomorphism in \eqref{eq:explored-chart-isomorphism}
preserves couplings and incident edges, so
\begin{equation}
\label{eq:transported-current-data}
\begin{aligned}
w_{\mathsf G_0}(\widetilde n_{\mathrm{in}})
&=w_{\overline{\mathsf G}_0}(n_{\mathrm{in}}),\\
\pi(\partial\widetilde n_{\mathrm{in}})
&=\partial n_{\mathrm{in}}.
\end{aligned}
\end{equation}

\emph{Fixing the exterior currents.}
Fix $m_{\mathrm{out}}$ and $n_{\mathrm{out}}$.
Under the source conditions $\partial m=C$ and
$\partial n=A$, \eqref{eq:source-symmetric-difference}
gives
\begin{equation}
\label{eq:interior-source-sets}
\begin{aligned}
\partial m_{\mathrm{in}}
&=C_0:=C\triangle\partial m_{\mathrm{out}},\\
\partial n_{\mathrm{in}}
&=A_0:=A\triangle\partial n_{\mathrm{out}}.
\end{aligned}
\end{equation}
If $C_0\not\subset V_0$ or $A_0\not\subset\pi(V_0)$,
no interior currents can satisfy these source conditions.
Both contributions to \eqref{eq:mixed-switching} are
then zero: toggling $\{o,z\}$ and
$\{\bar o,\pi(z)\}$ does not remove any source outside
$V_0$ or $\pi(V_0)$.
We may therefore assume that
$C_0\subset V_0$ and $A_0\subset\pi(V_0)$.
The sets $C_0$ and $A_0$ have even cardinality, since
$C$, $A$, and the source set of every current have even
cardinality.
Set
\[
D_0:=\{v\in V_0:\pi(v)\in A_0\}.
\]
The bijection $\pi:V_0\to\pi(V_0)$ implies that $D_0$
has even cardinality and
$\partial\widetilde n_{\mathrm{in}}=D_0$.

With the zero extensions in
\eqref{eq:current-inside-outside}, set
\[
W_{\mathrm{out}}:=w_B(m_{\mathrm{out}})
w_H(n_{\mathrm{out}}).
\]
The edgewise weight formula in
\eqref{eq:current-definitions} and
\eqref{eq:transported-current-data} give
\begin{align*}
w_B(m)w_H(n)
&=W_{\mathrm{out}}
w_{\mathsf G_0}(m_{\mathrm{in}})
w_{\overline{\mathsf G}_0}(n_{\mathrm{in}})\\
&=W_{\mathrm{out}}
w_{\mathsf G_0}(m_{\mathrm{in}})
w_{\mathsf G_0}(\widetilde n_{\mathrm{in}}).
\end{align*}
\emph{Applying ordinary source switching.}
Set
\begin{equation}
\label{eq:interior-total-current}
k_{\mathrm{in}}
:=m_{\mathrm{in}}+\widetilde n_{\mathrm{in}}.
\end{equation}
For $e\in E_0$, \eqref{eq:exploration-open-edge} gives
$\omega_e(m,n)=\mathbf1_{\{k_{\mathrm{in}}(e)>0\}}$.
By (X2) of \cref{lem:cover-exploration-interfaces},
$\pi(E_{\mathrm c})\cap\bar E_0=\varnothing$.
Thus the edge states on $E_{\mathrm c}$ are fixed by
$m_{\mathrm{out}}$ and $n_{\mathrm{out}}$.
Equation~\eqref{eq:exploration-measurability} therefore shows that
the indicator of producing $(E_{\mathrm o},E_{\mathrm c})$
depends on the interior currents only through $k_{\mathrm{in}}$.
Denote this indicator by $F_0(k_{\mathrm{in}})$.
If $F_0(k)=1$, then $k(e)>0$ for every $e\in E_0$.
Since $\mathsf G_0$ is connected and contains $o$ and $z$,
\[
F_0(k)=F_0(k)\mathbf1_{\{o\xleftrightarrow{k}z\}}
\]
for every current $k$ on $\mathsf G_0$.
Apply \eqref{eq:ordinary-switching} on $\mathsf G_0$
with source sets $C_0,D_0$ and function $F_0$.
Writing $p,q$ for currents on $\mathsf G_0$, we obtain
\begin{align*}
&\sum_{\substack{\partial p=C_0\\\partial q=D_0}}
w_{\mathsf G_0}(p)w_{\mathsf G_0}(q)F_0(p+q)\\
&\qquad=
\sum_{\substack{\partial p=C_0\triangle\{o,z\}\\
\partial q=D_0\triangle\{o,z\}}}
w_{\mathsf G_0}(p)w_{\mathsf G_0}(q)F_0(p+q).
\end{align*}
Under $\pi:V_0\to\pi(V_0)$, the source change
$D_0\mapsto D_0\triangle\{o,z\}$ becomes
$A_0\mapsto A_0\triangle\{\bar o,\pi(z)\}$.
After the fixed exterior currents are added back,
the full source sets are therefore changed from
$(C,A)$ to
$(C\triangle\{o,z\},A\triangle\{\bar o,\pi(z)\})$.

Multiply the equality of interior sums by
$W_{\mathrm{out}}$ and sum over all exterior currents.
Then sum over all admissible outputs whose vertex sets
contain $z$.
Every current pair with $z\in\mathcal V_o$ belongs to
exactly one of these outputs.
All summands are nonnegative, so the regrouping is valid.
The resulting identity is \eqref{eq:mixed-switching}.
\end{proof}

We now apply \cref{lem:mixed-switching} to prove
\cref{prop:finite-range-unfolding}.

\begin{proof}[Proof of \cref{prop:finite-range-unfolding}]
If $\bar x=\bar0$, then $x\in L\mathbb Z^d$.
The term $u=-x/L$ on the right-hand side of
\eqref{eq:finite-range-unfolding} is
$\tau_\beta^{(R)}(0)=1$, which equals the left-hand side.
All other terms are nonnegative, so the inequality holds.
If $\beta=0$ and $\bar x\ne\bar0$, the spins are
independent with mean zero, and both sides vanish.
We may therefore assume that $\beta>0$ and
$\bar x\ne\bar0$.

\emph{Choosing a finite graph for the exploration.}
Use the covering
$\pi_L:\widetilde H_{R,L}\to H_{R,L}$ defined in
\eqref{eq:periodic-labeled-cover}.
Choose an integer $M$ such that
\begin{equation}
\label{eq:cover-box-choice}
M>(L^{d}+1)R,
\qquad
\Lambda_M:=[-M,M]^d\cap\mathbb Z^d,
\end{equation}
and let $\mathsf G_{M,R,L}$ be the subgraph of
$\widetilde H_{R,L}$ induced by $\Lambda_M$.
Every edge of $\widetilde H_{R,L}$ has displacement
of $\ell^\infty$ norm at most $R$.
Hence
\[
\mathbb B_{\widetilde H_{R,L}}(0,L^{d})
\subseteq[-L^{d}R,L^{d}R]^d\cap\mathbb Z^d
\subseteq\Lambda_M.
\]
Since $\mathsf G_{M,R,L}$ is an induced subgraph, it
contains the subgraph induced by
$\mathbb B_{\widetilde H_{R,L}}(0,L^{d})$.
Thus the exploration defined using
\eqref{eq:exploration-cover-ball} applies with
\[
B=\mathsf G_{M,R,L},
\qquad H=H_{R,L},
\qquad \pi=\pi_L,
\qquad o=0.
\]

\emph{Applying mixed switching.}
Let $m$ be a current on $\mathsf G_{M,R,L}$ and
$n$ a current on $H_{R,L}$.
If $\partial n=\{\bar0,\bar x\}$, property (X4) of
\cref{lem:cover-exploration-interfaces} gives exactly
one explored vertex whose projection is $\bar x$.
Consequently,
\begin{equation}
\label{eq:explored-lift-partition}
\sum_{\substack{y\in\Lambda_M\\\pi_L(y)=\bar x}}
\mathbf1_{\{y\in\mathcal V_0\}}=1.
\end{equation}
Multiply \eqref{eq:explored-lift-partition} by
$w_{\mathsf G_{M,R,L}}(m)w_{H_{R,L}}(n)$ and sum over
$\partial m=\varnothing$ and
$\partial n=\{\bar0,\bar x\}$.
For every $y\in\Lambda_M$ with $\pi_L(y)=\bar x$,
\eqref{eq:mixed-switching} applies with
$C=\varnothing$, $A=\{\bar0,\bar x\}$, and $z=y$.
Using \eqref{eq:product-current-factorization} and
bounding the exploration indicators by one gives
\begin{align*}
Z_{\mathsf G_{M,R,L}}^\varnothing
Z_{H_{R,L}}^{\{\bar0,\bar x\}}
&=\sum_{\substack{y\in\Lambda_M\\\pi_L(y)=\bar x}}
\mathcal Z_{\mathsf G_{M,R,L},H_{R,L}}
^{\varnothing,\{\bar0,\bar x\}}
\bigl[\mathbf1_{\{y\in\mathcal V_0\}}\bigr]\\
&=\sum_{\substack{y\in\Lambda_M\\\pi_L(y)=\bar x}}
\mathcal Z_{\mathsf G_{M,R,L},H_{R,L}}
^{\{0,y\},\varnothing}
\bigl[\mathbf1_{\{y\in\mathcal V_0\}}\bigr]\\
&\leq\sum_{\substack{y\in\Lambda_M\\\pi_L(y)=\bar x}}
Z_{\mathsf G_{M,R,L}}^{\{0,y\}}
Z_{H_{R,L}}^\varnothing.
\end{align*}
The current weights are nonnegative, so the order of
summation may be exchanged.
Divide by the positive quantity
$Z_{\mathsf G_{M,R,L}}^\varnothing Z_{H_{R,L}}^\varnothing$.
By \eqref{eq:current-correlation} and
\eqref{eq:quotient-coupling-identity},
\begin{equation}
\label{eq:finite-box-unfolding}
\bigl\langle\sigma_{\bar0}\sigma_{\bar x}\bigr\rangle_
{L,\beta}^{\mathrm{per},(R)}
\leq
\sum_{\substack{y\in\Lambda_M\\\pi_L(y)=\bar x}}
\bigl\langle\sigma_0\sigma_y\bigr\rangle_{\mathsf G_{M,R,L}}.
\end{equation}

\emph{Comparing with the lattice two-point function.}
The Ising model on $\mathsf G_{M,R,L}$ is obtained from
the zero-field free-boundary model on $\Lambda_M$ with
interaction $J^{(R)}$ by deleting the couplings whose
displacements lie in $L\mathbb Z^d$.
In the FK representation, this deletion is equivalent to
conditioning the edges with displacements in
$L\mathbb Z^d$ to be closed.
Positive association and \eqref{eq:two-point-connectivity}
show that deleting these couplings cannot increase
the two-point function.
Free-boundary domain monotonicity, proved in the proof of
\cref{prop:critical-equivalence}, applies to $J^{(R)}$.
Enlarging the finite volume and taking the free-boundary
limit therefore gives, for every $y\in\Lambda_M$,
\begin{equation}
\label{eq:cover-correlation-comparison}
\bigl\langle\sigma_0\sigma_y\bigr\rangle_{\mathsf G_{M,R,L}}
\leq\mathbb E[\sigma_0\sigma_y]
\leq\tau_\beta^{(R)}(y).
\end{equation}
Here $\mathbb E$ is taken under the zero-field
free-boundary Gibbs measure on $\Lambda_M$ with
interaction $J^{(R)}$ and inverse temperature $\beta$.
The limiting function $\tau_\beta^{(R)}$ is defined in
\eqref{eq:truncated-two-point-functions}.

Combining \eqref{eq:finite-box-unfolding} and
\eqref{eq:cover-correlation-comparison} yields
\[
\bigl\langle\sigma_{\bar0}\sigma_{\bar x}\bigr\rangle_
{L,\beta}^{\mathrm{per},(R)}
\leq
\sum_{\substack{y\in\Lambda_M\\\pi_L(y)=\bar x}}
\tau_\beta^{(R)}(y)
\leq
\sum_{u\in\mathbb Z^d}\tau_\beta^{(R)}(x+Lu).
\]
The last inequality uses nonnegativity and the identity
$\{y\in\mathbb Z^d:\pi_L(y)=\bar x\}
=\{x+Lu:u\in\mathbb Z^d\}$.
This proves \eqref{eq:finite-range-unfolding}.
\end{proof}

\subsection{Infinite-volume ferromagnetic model}
\label{sec:infinite-long-range}

We first prove a limit lemma for the total spin in a fixed finite
set. We will also use the lemma for the nearest-neighbor model.

\begin{lemma}[Passing to infinite volume on a fixed observation set]
\label{lem:fixed-set-limit}
Fix a nonempty finite set $\Lambda\subset\mathbb Z^d$.
Let $S_\Lambda^{(m)}$ and $S_\Lambda$ denote total spins in
$\Lambda$ under possibly different probability measures.
Assume that
\[
S_\Lambda^{(m)}\xrightarrow[m\to\infty]{\mathrm d}S_\Lambda,
\qquad
\mathbb E S_\Lambda^{(m)}=0\quad(m\geq1).
\]
Write
\[
s_{\Lambda,m}:=\sqrt{\operatorname{Var}(S_\Lambda^{(m)})},
\qquad
s_\Lambda:=\sqrt{\operatorname{Var}(S_\Lambda)}.
\]
Suppose that $s_{\Lambda,m}^2\geq v|\Lambda|$ for every $m\geq1$,
where $v>0$ is independent of $m$.
Then
\[
\mathbb E S_\Lambda=0,
\qquad
s_{\Lambda,m}^2\longrightarrow s_\Lambda^2\geq v|\Lambda|>0.
\]
If a constant $0\leq C<\infty$ satisfies
\begin{equation}
\label{eq:fixed-set-prelimit-bounds}
\begin{aligned}
d_{\mathrm K}\!\left(
\frac{S_\Lambda^{(m)}}{s_{\Lambda,m}},G\right)
&\leq C|\Lambda|^{-1/2},\quad
d_{\mathrm W}\!\left(
\frac{S_\Lambda^{(m)}}{s_{\Lambda,m}},G\right)
\leq C|\Lambda|^{-1/2}, \qquad(m\geq1),
\end{aligned}
\end{equation}
then
\begin{equation}
\label{eq:fixed-set-limit-bounds}
\begin{aligned}
d_{\mathrm K}\!\left(\frac{S_\Lambda}{s_\Lambda},G\right)
&\leq C|\Lambda|^{-1/2},\quad
d_{\mathrm W}\!\left(\frac{S_\Lambda}{s_\Lambda},G\right)
\leq C|\Lambda|^{-1/2}.
\end{aligned}
\end{equation}
\end{lemma}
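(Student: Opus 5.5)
The key observation is that every $S_\Lambda^{(m)}$ and $S_\Lambda$ takes values in the finite set $\{-|\Lambda|,-|\Lambda|+2,\ldots,|\Lambda|\}$. Convergence in distribution therefore means convergence of each point mass $\mathbb P(S_\Lambda^{(m)}=k)\to\mathbb P(S_\Lambda=k)$. Consequently every moment converges, since each moment is a finite sum of such point masses. This gives $\mathbb E S_\Lambda=\lim_m\mathbb E S_\Lambda^{(m)}=0$ and $s_{\Lambda,m}^2=\mathbb E(S_\Lambda^{(m)})^2\to\mathbb E S_\Lambda^2=s_\Lambda^2$. The uniform bound $s_{\Lambda,m}^2\ge v|\Lambda|$ then passes to the limit, so $s_\Lambda^2\ge v|\Lambda|>0$. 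In particular $W^{(m)}:=S_\Lambda^{(m)}/s_{\Lambda,m}$ and $W:=S_\Lambda/s_\Lambda$ are well defined.

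Next, $W^{(m)}\to W$ in a strong sense. Since $s_{\Lambda,m}\to s_\Lambda>0$, the atoms $k/s_{\Lambda,m}$ converge to $k/s_\Lambda$ and their masses converge. Couple the laws so that $S_\Lambda^{(m)}=S_\Lambda$ with probability tending to one. This is possible because the total variation distance between their laws tends to zero, which follows from pointwise convergence of the masses on a finite support. On this event,
\[
|W^{(m)}-W|\le|\Lambda|\,\bigl|s_{\Lambda,m}^{-1}-s_\Lambda^{-1}\bigr|,
\]
and off it $|W^{(m)}-W|\le 2|\Lambda|/\sqrt{v|\Lambda|}$. Hence $\mathbb E|W^{(m)}-W|\to0$, and therefore
\[
d_{\mathrm W}(W^{(m)},W)\le\mathbb E|W^{(m)}-W|\to0.
\]
The triangle inequality gives $d_{\mathrm W}(W,G)\le C|\Lambda|^{-1/2}+o(1)$, which proves the Wasserstein bound after letting $m\to\infty$.

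The Kolmogorov bound needs more care, because Kolmogorov distance is not continuous under weak convergence when atoms move. I would argue pointwise in $t$.

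\textbf{Case 1: $t$ is not an atom of $W$.} Then $\mathbb P(W^{(m)}\le t)\to\mathbb P(W\le t)$. This holds because each atom $k/s_{\Lambda,m}$ eventually lies on the same side of $t$ as $k/s_\Lambda$, and the masses converge. It follows that $|\mathbb P(W\le t)-\Phi(t)|\le C|\Lambda|^{-1/2}$.

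\textbf{Case 2: $t=k/s_\Lambda$ is an atom.} Approximate $t$ from the right by non-atoms $t_j\downarrow t$. Right-continuity of the distribution function $F_W$ and continuity of $\Phi$ then pass the bound to $t$ itself.

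Combining both cases and taking the supremum over $t$ yields the claimed Kolmogorov bound. The only genuinely delicate point is this atom issue, and it is resolved by the right-limit argument.
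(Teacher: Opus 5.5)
Your proof is correct and follows essentially the same route as the paper. Both arguments use convergence of point masses and moments on the finite set $\{-|\Lambda|,\ldots,|\Lambda|\}$; both prove the Kolmogorov bound by convergence of distribution functions at non-atoms followed by right limits; and both prove the Wasserstein bound from $d_{\mathrm W}(W^{(m)},W)\to0$ and the triangle inequality. The only difference is cosmetic: you get $d_{\mathrm W}(W^{(m)},W)\to0$ from a maximal coupling, whereas the paper expands $\mathbb E f$ over the finite support for $1$-Lipschitz $f$ with $f(0)=0$, and the two computations give the same estimate.
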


\begin{proof}[Proof of \cref{lem:fixed-set-limit}]
The total spins take values in the same finite set
\[
\mathcal T_\Lambda:=\{-|\Lambda|,-|\Lambda|+2,\ldots,|\Lambda|\}.
\]
Convergence in distribution therefore gives
\[
\mathbb P(S_\Lambda^{(m)}=t)
\longrightarrow\mathbb P(S_\Lambda=t),
\qquad t\in\mathcal T_\Lambda.
\]
The first and second moments converge, so
$\mathbb E S_\Lambda=0$ and
$s_{\Lambda,m}^2\to s_\Lambda^2\geq v|\Lambda|>0$.
Consequently,
\[
\frac{S_\Lambda^{(m)}}{s_{\Lambda,m}}
\xrightarrow[m\to\infty]{\mathrm d}
\frac{S_\Lambda}{s_\Lambda}.
\]

Let $F_m$ and $F$ be the distribution functions of
$S_\Lambda^{(m)}/s_{\Lambda,m}$ and $S_\Lambda/s_\Lambda$,
respectively.
At every continuity point $t$ of $F$, we have
$F_m(t)\to F(t)$.
The Kolmogorov bound in \eqref{eq:fixed-set-prelimit-bounds}
therefore gives
\[
|F(t)-\Phi(t)|\leq C|\Lambda|^{-1/2}.
\]
For an arbitrary $t\in\mathbb R$, choose continuity points
$t_j$ of $F$ with $t_j\downarrow t$.
The right continuity of $F$ and the continuity of $\Phi$
give the same inequality at $t$.
Taking the supremum over $t$ proves the Kolmogorov bound in
\eqref{eq:fixed-set-limit-bounds}.

For the Wasserstein distance, it suffices to consider
$1$-Lipschitz functions $f$ with $f(0)=0$.
Indeed, subtracting $f(0)$ does not change the difference
of expectations.
For $t\in\mathcal T_\Lambda$, we have
\begin{align*}
|f(t/s_{\Lambda,m})|
&\leq\frac{|\Lambda|}{s_{\Lambda,m}}\leq\sqrt{|\Lambda|/v},\\
|f(t/s_{\Lambda,m})-f(t/s_\Lambda)|
&\leq |\Lambda|\left|s_{\Lambda,m}^{-1}-s_\Lambda^{-1}\right|.
\end{align*}
Expanding the expectations over $\mathcal T_\Lambda$ gives
\begin{align*}
&\left|\mathbb E f\!\left(\frac{S_\Lambda^{(m)}}{s_{\Lambda,m}}\right)
-\mathbb E f\!\left(\frac{S_\Lambda}{s_\Lambda}\right)\right|\\
&\quad\leq\sqrt{|\Lambda|/v}\sum_{t\in\mathcal T_\Lambda}
\left|\mathbb P(S_\Lambda^{(m)}=t)
-\mathbb P(S_\Lambda=t)\right|\\
&\qquad+|\Lambda|\left|s_{\Lambda,m}^{-1}-s_\Lambda^{-1}\right|.
\end{align*}
The bound is independent of $f$ and tends to zero because
$\mathcal T_\Lambda$ is fixed and $s_{\Lambda,m}\to s_\Lambda>0$.
Taking the supremum over $f$ yields
\[
d_{\mathrm W}\!\left(
\frac{S_\Lambda^{(m)}}{s_{\Lambda,m}},
\frac{S_\Lambda}{s_\Lambda}\right)\longrightarrow0.
\]
The triangle inequality and the Wasserstein bound in
\eqref{eq:fixed-set-prelimit-bounds} now prove the Wasserstein
bound in \eqref{eq:fixed-set-limit-bounds}.
\end{proof}

\begin{proof}[Proof of \cref{thm:infinite-long-range-new}]
Let $J$ satisfy \eqref{eq:intro-summable-J}, and fix
$0\leq\beta<\beta_{\mathrm c}(J)$.
Fix a nonempty finite set $\Lambda\subset\mathbb Z^d$.
Choose an integer $m_0\geq1$ such that $\Lambda\subseteq B_{m_0}$,
and set $D_m:=B_{m_0+m}$ for $m\geq1$.
The set $\Lambda$ remains fixed as $D_m$ grows.
Under $\mu^{\mathrm f}_{D_m,\beta}$ with interaction $J$, write
\[
S_\Lambda^{(m)}:=\sum_{x\in\Lambda}\sigma_x,
\qquad
s_{\Lambda,m}:=\sqrt{\operatorname{Var}(S_\Lambda^{(m)})}.
\]
The variance is taken under $\mu^{\mathrm f}_{D_m,\beta}$.
Spin-flip symmetry gives $\mathbb E S_\Lambda^{(m)}=0$.

Use the Edwards--Sokal coupling on the graph $(D_m,E_{D_m})$
from \eqref{eq:finite-free-edge-set}, with
$K_{xy}=\beta J(x-y)$, and take
\[
a_x=\mathbf1_{\{x\in\Lambda\}},\qquad x\in D_m.
\]
Then $T_a=S_\Lambda^{(m)}$, $s_a=s_{\Lambda,m}$,
$A=1$, $H=\Lambda$, and $M=|\Lambda|$.
By \cref{prop:cluster-moments},
\[
\mathbb E|C_{D_m}(x)|^r\leq M_r(\beta,J)<\infty,
\qquad x\in\Lambda,\quad r=2,3.
\]
The expectations are taken under
$\phi^{\mathrm f}_{D_m,\beta,2}$, and $C_{D_m}(x)$ is the
open cluster containing $x$ in the entire graph $(D_m,E_{D_m})$.
The constants $M_2(\beta,J)$ and $M_3(\beta,J)$ are independent
of $m$ and $\Lambda$.

For the weights $a_x=\mathbf1_{\{x\in\Lambda\}}$, we have
\[
V_a=\sum_{C\in\mathcal C(\omega)}|C\cap\Lambda|^2
\geq\sum_{C\in\mathcal C(\omega)}|C\cap\Lambda|=|\Lambda|.
\]
Here $\mathcal C(\omega)$ consists of the open clusters in
$(D_m,E_{D_m})$.
Thus \eqref{eq:ES-conditional-moments} gives
\begin{equation}
\label{eq:infinite-free-local-variance}
s_{\Lambda,m}^2=\mathbb EV_a\geq |\Lambda|.
\end{equation}
The expectation is taken under $\phi^{\mathrm f}_{D_m,\beta,2}$.
Applying \cref{cor:cluster-moment-normal-approximation} with
$b_2=M_2(\beta,J)$, $b_3=M_3(\beta,J)$, and $c=1$ gives
\begin{equation}
\label{eq:infinite-free-prelimit-bounds}
\begin{aligned}
d_{\mathrm K}\!\left(
\frac{S_\Lambda^{(m)}}{s_{\Lambda,m}},G\right)
&\leq C_{\beta,J}^{\infty}|\Lambda|^{-1/2},\\
d_{\mathrm W}\!\left(
\frac{S_\Lambda^{(m)}}{s_{\Lambda,m}},G\right)
&\leq C_{\beta,J}^{\infty}|\Lambda|^{-1/2}.
\end{aligned}
\end{equation}
Both bounds hold under $\mu^{\mathrm f}_{D_m,\beta}$, with
$C_{\beta,J}^{\infty}<\infty$ independent of $m$ and $\Lambda$.

By \eqref{eq:intro-infinite-volume}, the distribution of the
spins in $\Lambda$ under $\mu^{\mathrm f}_{D_m,\beta}$ converges
to their distribution under $\mu^\infty_\beta$ with interaction
$J$. Hence
\[
S_\Lambda^{(m)}\xrightarrow[m\to\infty]{\mathrm d}S_\Lambda.
\]
Applying \cref{lem:fixed-set-limit} with $v=1$, using
\eqref{eq:infinite-free-local-variance} and
\eqref{eq:infinite-free-prelimit-bounds}, proves
\eqref{eq:infinite-long-range-BE-new} and
\eqref{eq:infinite-long-range-W-new}.
\end{proof}

\subsection{Classical nearest-neighbor model}
\label{sec:nearest-neighbor}

Throughout this subsection, $\beta_{\mathrm c}$ denotes
$\beta_{\mathrm c}(J_{\mathrm{nn}})$, as in
\cref{subsec:main-classical}.
We first bound the variance of $S_\Lambda$ under
$\mu^{\mathrm f}_{D,\beta}$ for finite sets
$\varnothing\ne\Lambda\subseteq D$.
The estimate applies both when $D=\Lambda$ and when
$D$ grows with $\Lambda$ fixed.

\begin{lemma}[Variance lower bound]
\label{lem:nearest-neighbor-variance-lower}
Let $D\subset\mathbb Z^d$ be finite, let
$\varnothing\ne\Lambda\subseteq D$, and let $\beta\in\mathbb R$.
Then
\begin{equation}
\label{eq:nearest-neighbor-variance-lower}
\operatorname{Var}(S_\Lambda)
\geq\frac{|\Lambda|}{2}
\operatorname{sech}^2(2d|\beta|)>0.
\end{equation}
Here $\operatorname{sech}(t)=1/\cosh(t)$.
All probabilities, expectations, and variances in this
lemma and its proof are taken under the zero-field
nearest-neighbor Gibbs measure $\mu^{\mathrm f}_{D,\beta}$
with free boundary conditions.
\end{lemma}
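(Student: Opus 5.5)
We use the law of total variance, conditioning on the spins outside a sparse subset of $\Lambda$. Choose a subset $I\subseteq\Lambda$ that is independent in the nearest-neighbor graph, meaning no two vertices of $I$ are adjacent, and has $|I|\geq|\Lambda|/2$. Such a set is given by the larger of the two parity classes $\{x\in\Lambda:\sum_i x_i\ \text{even}\}$ and $\{x\in\Lambda:\sum_i x_i\ \text{odd}\}$, since $\mathbb Z^d$ is bipartite. Let $\mathcal F$ be the $\sigma$-field generated by $(\sigma_y)_{y\in D\setminus I}$. Then
\[
\operatorname{Var}(S_\Lambda)\geq\mathbb E\operatorname{Var}(S_\Lambda\mid\mathcal F).
\]
Given $\mathcal F$, the term $\sum_{x\in\Lambda\setminus I}\sigma_x$ is fixed, so $\operatorname{Var}(S_\Lambda\mid\mathcal F)=\operatorname{Var}\bigl(\sum_{x\in I}\sigma_x\mid\mathcal F\bigr)$.

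The key point is that, given $\mathcal F$, the spins $(\sigma_x)_{x\in I}$ are conditionally independent. This holds because $I$ contains no edges of the nearest-neighbor graph restricted to $D$, so the Gibbs weight factorizes over $x\in I$. Each $\sigma_x$ with $x\in I$ has the conditional law
\[
\mathbb P(\sigma_x=\pm1\mid\mathcal F)\propto e^{\pm\beta h_x},
\qquad h_x:=\sum_{y\in D,\ \|y-x\|_1=1}\sigma_y .
\]
This works for either sign of $\beta$. Hence
\[
\operatorname{Var}(\sigma_x\mid\mathcal F)=1-\tanh^2(\beta h_x)=\operatorname{sech}^2(\beta h_x).
\]
Since $|h_x|\leq 2d$ and $\operatorname{sech}$ is even and decreasing on $[0,\infty)$, we get $\operatorname{sech}^2(\beta h_x)\geq\operatorname{sech}^2(2d|\beta|)$.

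Summing over $x\in I$ by conditional independence gives
\[
\operatorname{Var}(S_\Lambda\mid\mathcal F)\geq|I|\operatorname{sech}^2(2d|\beta|)\geq\frac{|\Lambda|}{2}\operatorname{sech}^2(2d|\beta|).
\]
Taking expectations yields \eqref{eq:nearest-neighbor-variance-lower}. Positivity follows since $\operatorname{sech}>0$ and $\Lambda\neq\varnothing$.

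No step is a serious obstacle. The only care needed is in the conditional factorization: a vertex in $I$ may have neighbors in $D\setminus\Lambda$, but those spins are also in $\mathcal F$, so the factorization holds, and free boundary conditions mean neighbors outside $D$ do not appear in $h_x$.
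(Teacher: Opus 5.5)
Your proposal is correct and follows the paper's proof essentially step for step. Like the paper, you take the larger parity class $I$, condition on the spins in $D\setminus I$, use conditional independence with single-site variance $\operatorname{sech}^2$ bounded below via $|h_x|\leq 2d$, and apply the law of total variance; the only difference is that you keep $\beta$ outside $h_x$, which is purely notational.
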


\begin{proof}[Proof of \cref{lem:nearest-neighbor-variance-lower}]
Partition $\Lambda$ into the two parity classes
\begin{equation}
\label{eq:parity-classes}
I_j:=\{x\in\Lambda:x_1+\cdots+x_d\equiv j\pmod2\},
\qquad j\in\{0,1\}.
\end{equation}
Choose $I$ to be a class with $|I|\geq|\Lambda|/2$,
and set
\[
\mathcal F:=\sigma(\sigma_y:y\in D\setminus I).
\]
No nearest-neighbor edge joins two vertices of $I$.
After conditioning on $\mathcal F$, every term in the
Gibbs exponent that contains a spin $\sigma_x$ with
$x\in I$ is therefore linear in that spin.
Set
\[
h_x:=\beta\sum_{\substack{y\in D\\\|x-y\|_1=1}}
\sigma_y,
\qquad x\in I.
\]
For every $\xi\in\{-1,+1\}^{I}$, the conditional law is
\[
\mathbb P(\sigma_x=\xi_x\text{ for all }x\in I
\mid\mathcal F)
=\prod_{x\in I}\frac{e^{h_x\xi_x}}{2\cosh(h_x)}.
\]
Thus the spins in $I$ are conditionally independent, and
\[
\mathbb E[\sigma_x\mid\mathcal F]=\tanh(h_x),
\qquad
\operatorname{Var}(\sigma_x\mid\mathcal F)
=1-\tanh^2(h_x)=\operatorname{sech}^2(h_x).
\]
Every vertex has at most $2d$ nearest neighbors, so
$|h_x|\leq2d|\beta|$.
Consequently,
\begin{equation}
\label{eq:single-site-conditional-variance-lower}
\operatorname{Var}(\sigma_x\mid\mathcal F)
\geq\operatorname{sech}^2(2d|\beta|),
\qquad x\in I.
\end{equation}

Conditional on $\mathcal F$, the sum over
$\Lambda\setminus I$ is fixed.
The law of total variance and conditional independence
therefore give
\begin{align*}
\operatorname{Var}(S_\Lambda)
&\geq\mathbb E\bigl[
\operatorname{Var}(S_\Lambda\mid\mathcal F)\bigr]\\
&=\sum_{x\in I}\mathbb E\bigl[
\operatorname{Var}(\sigma_x\mid\mathcal F)\bigr]\\
&\geq |I|\operatorname{sech}^2(2d|\beta|)\\
&\geq\frac{|\Lambda|}{2}
\operatorname{sech}^2(2d|\beta|).
\end{align*}
This proves \eqref{eq:nearest-neighbor-variance-lower}.
\end{proof}

We next prove normal approximation bounds for $S_\Lambda$
under $\mu^{\mathrm f}_{D,\beta}$, with constants independent
of both $D$ and $\Lambda$.

\begin{proposition}[Total spin in a subset of a finite system]
\label{prop:nearest-neighbor-local-approximation}
Fix $|\beta|<\beta_{\mathrm c}$.
Let $D\subset\mathbb Z^d$ be finite, and let
$\varnothing\ne\Lambda\subseteq D$.
Write
\[
s_\Lambda:=\sqrt{\operatorname{Var}(S_\Lambda)}.
\]
The variance is taken under the zero-field nearest-neighbor
Gibbs measure $\mu^{\mathrm f}_{D,\beta}$ with free boundary
conditions.
Spin-flip symmetry gives $\mathbb E S_\Lambda=0$.
Then $s_\Lambda^2>0$, and there exists $C_{\beta,d}<\infty$
such that
\begin{equation}
\label{eq:nearest-neighbor-local-bounds}
\begin{aligned}
d_{\mathrm K}\!\left(\frac{S_\Lambda}{s_\Lambda},G\right)
&\leq C_{\beta,d}|\Lambda|^{-1/2},\\
d_{\mathrm W}\!\left(\frac{S_\Lambda}{s_\Lambda},G\right)
&\leq C_{\beta,d}|\Lambda|^{-1/2}.
\end{aligned}
\end{equation}
Both bounds hold under $\mu^{\mathrm f}_{D,\beta}$.
The constant depends only on $\beta$ and $d$.
\end{proposition}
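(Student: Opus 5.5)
The plan is to reduce both signs of $\beta$ to the ferromagnetic case and then apply \cref{cor:cluster-moment-normal-approximation}. The case $0\leq\beta<\beta_{\mathrm c}$ is handled as in the proof of \cref{thm:infinite-long-range-new}. Use the Edwards--Sokal coupling on $(D,E_D)$ with $K_{xy}=\beta$ on nearest-neighbor pairs and weights $a_x=\mathbf1_{\{x\in\Lambda\}}$. By \cref{prop:cluster-moments} with $J=J_{\mathrm{nn}}$, $\mathbb E|C_D(x)|^r\leq M_r(\beta,J_{\mathrm{nn}})$ for $r=2,3$, uniformly in $D$ and $x$. The variance lower bound $s_\Lambda^2=\mathbb EV_a\geq|\Lambda|$ holds exactly as in \eqref{eq:infinite-free-local-variance}. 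The corollary with $c=1$ then gives \eqref{eq:nearest-neighbor-local-bounds} with constants depending only on $\beta$ and $d$.

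For $-\beta_{\mathrm c}<\beta<0$ I would use the sublattice gauge transformation. Let $\epsilon_x:=(-1)^{x_1+\cdots+x_d}$ and set $\tau_x:=\epsilon_x\sigma_x$. Every nearest-neighbor edge joins the two parity classes, so $\epsilon_x\epsilon_y=-1$ on edges. Hence $\beta\sigma_x\sigma_y=|\beta|\tau_x\tau_y$, and $\tau$ has law $\mu^{\mathrm f}_{D,|\beta|}$, which is ferromagnetic. Then
\[
S_\Lambda=\sum_{x\in D}a_x\tau_x,
\qquad a_x:=\epsilon_x\mathbf1_{\{x\in\Lambda\}},
\]
is a signed weighted spin sum $T_a$ under the ferromagnetic measure at $|\beta|<\beta_{\mathrm c}$, with $A=1$ and $H=\Lambda$. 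This is exactly the setting where the signed bounds \eqref{eq:signed-intrinsic-bound-new} and \eqref{eq:finite-graph-wasserstein-bound}, and thus \cref{cor:cluster-moment-normal-approximation}, apply. The cluster moment hypotheses \eqref{eq:cluster-moment-assumptions} again follow from \cref{prop:cluster-moments} at $|\beta|$, uniformly in $D$ and $\Lambda$.

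The main obstacle is the variance lower bound. For signed weights, cancellations inside clusters mean $V_a\geq|\Lambda|$ no longer holds. Here \cref{lem:nearest-neighbor-variance-lower} supplies what is needed: it is stated for all real $\beta$ directly under the original measure, and $\operatorname{Var}(T_a)=\operatorname{Var}(S_\Lambda)$. So $s_a^2\geq c|\Lambda|$ with $c=\tfrac12\operatorname{sech}^2(2d|\beta|)>0$, which also gives $s_\Lambda^2>0$. Applying \cref{cor:cluster-moment-normal-approximation} with $b_r=M_r(|\beta|,J_{\mathrm{nn}})$ for $r=2,3$, and with this $c$, yields both bounds. The constant depends only on $|\beta|$ and $d$.

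As a consistency check, the same lemma could replace $c=1$ in the ferromagnetic case too, so a single argument covers both signs, with the gauge transform being trivial when $\beta\geq0$.
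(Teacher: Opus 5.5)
Your proposal is correct and follows essentially the same route as the paper. The sublattice gauge transformation $\tau_x=(-1)^{x_1+\cdots+x_d}\sigma_x$ turns $S_\Lambda$ into a signed sum $T_a$ under $\mu^{\mathrm f}_{D,|\beta|}$; the cluster moments come from \cref{prop:cluster-moments}, the variance lower bound from \cref{lem:nearest-neighbor-variance-lower}, and \cref{cor:cluster-moment-normal-approximation} finishes. The only difference is cosmetic: you take $c=1$ from $V_a\geq|\Lambda|$ when $\beta\geq0$, whereas the paper uses $c=\tfrac12\operatorname{sech}^2(2d|\beta|)$ for both signs at once, which is exactly the unified version you mention at the end.
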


\begin{proof}[Proof of \cref{prop:nearest-neighbor-local-approximation}]
Set $b:=|\beta|$.
By \cref{lem:nearest-neighbor-variance-lower},
\[
s_\Lambda^2\geq c|\Lambda|,
\qquad
c:=\frac12\operatorname{sech}^2(2db)>0.
\]
For $\beta\geq0$, take
\[
\tau_x:=\sigma_x,
\qquad
a_x:=\mathbf1_{\{x\in\Lambda\}},
\qquad x\in D.
\]
For $\beta<0$, define
\begin{equation}
\label{eq:gauge-transformation}
\eta_x:=(-1)^{x_1+\cdots+x_d},
\qquad
\tau_x:=\eta_x\sigma_x,
\qquad x\in D.
\end{equation}
Nearest neighbors satisfy $\eta_x\eta_y=-1$, so
\[
\beta\sigma_x\sigma_y
=(-b)\eta_x\eta_y\tau_x\tau_y
=b\tau_x\tau_y.
\]
The map $\sigma\mapsto\tau$ is a bijection on
$\{-1,+1\}^{D}$.
For $\beta<0$, take
\[
a_x:=\eta_x\mathbf1_{\{x\in\Lambda\}},\qquad x\in D.
\]
Thus, for both signs of $\beta$, the spins $\tau$ have law
$\mu^{\mathrm f}_{D,b}$, and
\[
T_a=\sum_{x\in D}a_x\tau_x=S_\Lambda,
\qquad
s_a=s_\Lambda,\quad A=1,\quad H=\Lambda,\quad M=|\Lambda|.
\]
Here $s_a^2$ is the variance of $T_a$ under
$\mu^{\mathrm f}_{D,b}$; it equals the variance of
$S_\Lambda$ under $\mu^{\mathrm f}_{D,\beta}$.

Apply the Edwards--Sokal coupling to $\tau$ on $D$, with
nearest-neighbor couplings $K_{xy}=b$.
Since $b<\beta_{\mathrm c}$, \cref{prop:cluster-moments} gives
\[
\mathbb E|C_D(x)|^r\leq M_r(b,J_{\mathrm{nn}})<\infty,
\qquad x\in\Lambda,\quad r=2,3.
\]
The expectations are taken under $\phi^{\mathrm f}_{D,b,2}$.
The cluster $C_D(x)$ contains $x$ and is formed using the
open edges in the entire set $D$.
The constants $M_2(b,J_{\mathrm{nn}})$ and
$M_3(b,J_{\mathrm{nn}})$ are independent of $D$ and $\Lambda$.

Apply \cref{cor:cluster-moment-normal-approximation} with
$b_2=M_2(b,J_{\mathrm{nn}})$, $b_3=M_3(b,J_{\mathrm{nn}})$,
and $c=\frac12\operatorname{sech}^2(2db)$.
This gives the two bounds in
\eqref{eq:nearest-neighbor-local-bounds} for $T_a/s_a$
under $\mu^{\mathrm f}_{D,b}$.
The identity $T_a=S_\Lambda$ transfers both bounds to
$S_\Lambda/s_\Lambda$ under $\mu^{\mathrm f}_{D,\beta}$.
\end{proof}

\subsubsection{Finite-volume Gibbs measures}
\label{subsubsec:proof-classical-finite}

\begin{proof}[Proof of \cref{thm:classical-ising}]
Fix $|\beta|<\beta_{\mathrm c}$ and a nonempty finite set
$\Lambda\subset\mathbb Z^d$.
Apply \cref{prop:nearest-neighbor-local-approximation} with
$D=\Lambda$.
The variance $s_\Lambda^2$ is then taken under
$\mu^{\mathrm f}_{\Lambda,\beta}$, and
$W_\Lambda=S_\Lambda/s_\Lambda$.
Hence $s_\Lambda^2>0$, and
\eqref{eq:nearest-neighbor-local-bounds} gives
\eqref{eq:classical-ising-BE} and
\eqref{eq:classical-ising-W-bound}.
\end{proof}

\subsubsection{Infinite-volume Gibbs measures}
\label{subsubsec:proof-classical-infinite}

\begin{proof}[Proof of
\cref{thm:classical-ising-infinite-wasserstein}]
Fix $|\beta|<\beta_{\mathrm c}$ and a nonempty finite set
$\Lambda\subset\mathbb Z^d$.
Choose an integer $m_0\geq1$ such that
$\Lambda\subseteq B_{m_0}$, and set
$D_m:=B_{m_0+m}$ for $m\geq1$.
The set $\Lambda$ remains fixed as $D_m$ grows.
Under $\mu^{\mathrm f}_{D_m,\beta}$, write
\[
S_\Lambda^{(m)}:=\sum_{x\in\Lambda}\sigma_x,
\qquad
s_{\Lambda,m}:=\sqrt{\operatorname{Var}(S_\Lambda^{(m)})}.
\]
The variance is taken under $\mu^{\mathrm f}_{D_m,\beta}$.
Spin-flip symmetry gives $\mathbb E S_\Lambda^{(m)}=0$.
By \cref{lem:nearest-neighbor-variance-lower} with $D=D_m$,
\[
s_{\Lambda,m}^2\geq v|\Lambda|,
\qquad
v:=\frac12\operatorname{sech}^2(2d|\beta|)>0.
\]
Applying \cref{prop:nearest-neighbor-local-approximation}
with $D=D_m$ gives
\begin{equation}
\label{eq:classical-uniform-prelimit-bounds}
\begin{aligned}
d_{\mathrm K}\!\left(
\frac{S_\Lambda^{(m)}}{s_{\Lambda,m}},G\right)
&\leq C_{\beta,d}|\Lambda|^{-1/2},\\
d_{\mathrm W}\!\left(
\frac{S_\Lambda^{(m)}}{s_{\Lambda,m}},G\right)
&\leq C_{\beta,d}|\Lambda|^{-1/2}.
\end{aligned}
\end{equation}
Both bounds hold under $\mu^{\mathrm f}_{D_m,\beta}$, with
$C_{\beta,d}$ independent of $m$ and $\Lambda$.

For $\beta\geq0$, the distribution of the spins in $\Lambda$
under $\mu^{\mathrm f}_{D_m,\beta}$ converges to their
distribution under $\mu^\infty_\beta$ by
\eqref{eq:intro-infinite-volume}.
For $\beta<0$, the transformation in
\eqref{eq:gauge-transformation} maps
$\mu^{\mathrm f}_{D_m,\beta}$ to
$\mu^{\mathrm f}_{D_m,|\beta|}$.
The signs $\eta_x$ do not depend on $m$.
The free-boundary limit at parameter $|\beta|$ therefore
also gives the free-boundary limit at parameter $\beta$.
Thus, for both signs of $\beta$,
\[
S_\Lambda^{(m)}\xrightarrow[m\to\infty]{\mathrm d}S_\Lambda,
\]
where $S_\Lambda$ is the total spin under $\mu^\infty_\beta$.
Applying \cref{lem:fixed-set-limit} to
\eqref{eq:classical-uniform-prelimit-bounds} gives
$s_\Lambda^2\geq v|\Lambda|>0$ and proves
\eqref{eq:classical-infinite-K-bound} and
\eqref{eq:classical-infinite-W-bound}, with
$C_{\beta,d}^{\infty}=C_{\beta,d}$.
\end{proof}

\bmhead{Acknowledgements}
Liu S.H. was partially supported by the Fundamental Research Funds for the Central Universities DUT25RC(3)133.
Shao Q.M. was partially supported by National Nature Science Foundation of China NSFC 12031005 and Shenzhen Outstanding Talents Training Fund, China.

\vspace{1cm}

\bmhead{AI Statement}
During the preparation of this manuscript, the authors used ChatGPT for language editing, LaTeX formatting, bibliographic verification, and checks of notation and cross-references, as well as for assistance with graph-covering arguments in the torus-to-lattice comparison for the fully periodized model. The authors reviewed the manuscript, verified all mathematical arguments, and take full responsibility for its content.





\end{document}